\documentclass[11pt]{article}
\usepackage[letterpaper,margin=1in]{geometry}
\usepackage{setspace}
\usepackage[utf8]{inputenc}

\usepackage{subcaption}
\usepackage{physics}
\usepackage{amsmath,graphicx}
\usepackage{mathtools}
\mathtoolsset{showonlyrefs=true}

\usepackage{amsfonts}
\usepackage{amsthm}
\usepackage{enumitem}  
\usepackage{xfrac}
\usepackage[ruled,vlined]{algorithm2e}
\usepackage{authblk}

\usepackage[backref=page]{hyperref}
\newif\ifbackrefshowonlyfirst
\backrefshowonlyfirstfalse
\makeatletter
\let\BR@direct@old@hyper@natlinkstart\hyper@natlinkstart
\renewcommand*{\hyper@natlinkstart}{\phantomsection\BR@direct@old@hyper@natlinkstart}
\let\BR@direct@oldBR@citex\BR@citex
\renewcommand*{\BR@citex}{\phantomsection\BR@direct@oldBR@citex}%

\long\def\hyper@page@BR@direct@ref#1#2#3{p. \hyperlink{#3}{#1}}

\ifx\backrefxxx\hyper@page@backref
    \let\backrefxxx\hyper@page@BR@direct@ref
    \ifbackrefshowonlyfirst
    \fi
\else
    \ifbackrefshowonlyfirst
    \fi
\fi

\RequirePackage{etoolbox}
\patchcmd{\Hy@backout}{Doc-Start}{\@currentHref}{}{\errmessage{I can't seem to patch backref}}
\makeatother

\usepackage[numbers]{natbib}

\newtheorem{theorem}{Theorem}

\newtheorem{assumption}[theorem]{Assumption}
\newtheorem{lemma}[theorem]{Lemma}
\newtheorem{corollary}[theorem]{Corollary}

\newtheorem{remark}[theorem]{Remark}
\newenvironment{proofof}[1]
  {\begin{proof}[Proof of #1]}
  {\end{proof}}

\newcommand{\inner}[2]{\Braket{ #1, #2 }}
\usepackage{interval}
\intervalconfig{soft open fences}
\usepackage{xcolor}
\usepackage{braket}

\DeclareMathOperator{\dom}{dom}

\DeclareMathOperator*{\argmin}{ \arg \min }

\DeclareMathOperator{\KL}{KL}

\newcommand{\scaleparallel}[2]{
    \left. {#1} \middle| \kern-0.1em \middle| {#2} \right.
}

\newcommand{\RenyiKL}[2]{\KL\left( \scaleparallel{#1}{#2}\right)}

\author[1]{Chun-Neng~Chu}
\author[1,2,3]{Yen-Huan Li}

\affil[1]{Department of Computer Science and Information Engineering,\protect\\National Taiwan University}
\affil[2]{Department of Mathematics, National Taiwan University}
\affil[3]{Center for Quantum Science and Engineering, \protect\\ National Taiwan University}

\usepackage{etoolbox}
\newbool{versionArXiv}
\setbool{versionArXiv}{true} 

\begin{document}
    \ifbool{versionArXiv}{
\date{}
\title{Multiplicative Gradient Method for Log-Homogeneous Optimization: A Unified Mirror-Descent Framework}
}{
}
\maketitle
\begin{abstract}%
Consider the problems of computing $\ell_p$-Lewis weights, Kelly portfolios, D-optimal designs, general equilibria in Fisher markets, and optimal error exponents for channel coding in information theory. 
All of them can be formulated as optimization problems over the probability simplex, but except for the last one, the associated objective functions are neither Lipschitz continuous nor smooth, rendering existing first-order optimization algorithms and their guarantees not directly applicable. 
Nevertheless, first-order algorithms for solving these problems do exist in the literature and, perhaps surprisingly, look very similar, even though their interpretations, derivations, and analyses mostly appear unrelated. 
In this paper, we develop a framework unifying these results that captures their common problem structure through log-homogeneity and a novel second-order condition on the objective function, and identifies their common algorithmic pattern as multiplicative gradient methods. 

The framework recovers known algorithms and their optimization error guarantees for computing Kelly portfolios, D-optimal designs, optimal error exponents for channel coding in classical communication, $\ell_p$-Lewis weights with $p<2$, and general equilibria in Fisher markets.
To illustrate the insights provided by the framework, we further show that multiplicative gradient methods yield state-of-the-art performance guarantees  for computing $\ell_p$-Lewis weights with $p > 2$ and  optimal error exponents in classical-quantum communication.
Both results are novel to the best of our knowledge.

\end{abstract}

\ifbool{versionArXiv}{
}{
\begin{keywords}%
Multiplicative gradient method, mirror descent, relative smoothness, log-homogeneous optimization, Lewis weight
\end{keywords}
}

\section{Introduction}
\label{sec:Intro}


Consider the following three optimization problems.

\begin{enumerate}
\item The \emph{$\ell_p$-Lewis weight} for $p\in(0,2)\cup(2,\infty)$ for a matrix $A\in\mathbb{R}^{m\times n}$ is given by \citep{Lee2020}
\begin{align}
    \label{def:Lewis}
    x_{\text{Lew},\star}
    \in\argmin_{x\in \Delta_m}{f}_{\text{Lew}}(x),
    \quad
    {f}_{\text{Lew}}(x)
    \coloneqq\frac{p}{n\left(2-p\right)}\log\left(\det\left(M(x)\right)\right),
\end{align}
where $\Delta_m$ denotes the probability simplex in $\mathbb{R}^m$ and $M(x)\coloneqq\sum_{j=1}^m x[j]^{1-2/p}A[j]A[j]^{\top}$. 
Here, $x[j]$ denotes the $j^{\text{th}}$ coordinate of $x$, and $A[j]\in\mathbb{R}^n$ denotes the $j^{\text{th}}$ row of $A$, viewed as a column vector.
The notion of $\ell_p$-Lewis weights finds applications in $\ell_p$-regression \citep{Durfee2018,Parulekar2021}, linear programming \citep{Lee2020}, and log-concave sampling \citep{Kook2024,Jiang2024}.
\citet{Cohen2015b} proposed the following iteration rule:
\begin{align}
  \label{alg:CohenMGM}
  x_{t+1}[j] 
  =\frac{1}{n}\inner{M(nx_t)^{-1}A[j]}{A[j]}^{p/2},\quad\text{for all } 1\leq j\leq m.
\end{align}
They showed that the iterates converge to the minimizer at a rate of $\widetilde{O}(\abs{1-p/2}^T)$ for $p<4$, where $T$ is the number of iterations.

\item Consider a Fisher market with $m$ goods of unit supply and $n$ buyers, each with a budget and a constant-elasticity-of-substitution (CES) utility function.
The \emph{equilibrium prices} are given by \citep{Cheung2020}
\begin{align}
    \label{def:Fisher}
    x_{\text{Fis},\star}
    \in\argmin_{x\in \Delta_m}{f}_{\text{Fis}}(x),
    \quad
    {f}_{\text{Fis}}(x)
    \coloneqq\frac{1-\rho}{\rho}\sum_{i=1}^n w[i] \log\left(\inner{a_i}{x^{\rho/(\rho-1)}}\right),
\end{align}
where the power is applied entry-wise and $\rho\in(-\infty,0)\cup(0,1)$ is the CES parameter.
Here, $w[i]$ and the entrywise non-negative vector $a_i$ denote the $i^{\text{th}}$ buyer's budget and preference parameters over the $m$ goods, respectively.
In addition to their significance in general equilibrium theory, Fisher market models also find applications in recommender systems \citep{Kroer2019,Kroer2021Computing}, online advertising \citep{Conitzer2022,Bateni2021}, and edge computing \citep{Nguyen2019,Nguyen2021}.
For $\rho < 0$, \citet{Cheung2018c} proposed the following iteration rule:
\begin{align}
& b_{t+1}[i] = w[i]\inner{a_i}{x_t^{\rho/(\rho-1)}}^{-1}a_i\odot x_t^{\rho/(\rho-1)}\quad\text{for all } 1\leq i\leq n , \label{alg:IntroFisherMGM} \\
& x_{t + 1} = \sum_{k = 1}^n b_{t + 1} [ k ] , 
\end{align}
where $\odot$ denotes entry-wise multiplication and $b_t[i]$ represents how the $i^{\text{th}}$ buyer allocates their budget across the goods.
They showed that the optimization error converges to zero at a rate of $\widetilde{O}((\rho/(\rho-1))^T)$.

\item The \emph{Kelly criterion}, which yields an asymptotically optimal strategy for multi-round investment, requires solving an optimization problem of the following form at the beginning of each round \citep{Kelly1956,Breiman1961,Algoet1988}:
\begin{align}
    \label{def:Kelly}
    x_{\text{Kel},\star}
    \in\argmin_{x\in \Delta_m}{f}_{\text{Kel}}(x),
    \quad
    {f}_{\text{Kel}}(x)
    \coloneqq-\mathbb{E}_{a\sim P}\left[\log\left(\inner{a}{x}\right)\right],
\end{align}
where $m$ denotes the number of stocks, $a$ is an entry-wise nonnegative random vector whose entries are the price relatives of the stocks, and $P$ denotes the probability distribution of the price relatives conditional on the history.
Beyond mathematical finance, equivalent optimization problems also arise in positron emission tomography, astronomical imaging, and fluorescence microscopy \citep{Vardi1993,Bertero2009}.
\citet{Shepp1982} and \citet{Cover1984} proposed the following iteration rule:
\begin{align}
    \label{alg:KellyMGM}
    x_{t+1}[j]
    =x_t[j]\mathbb{E}_{a\sim P}\left[\frac{a[j]}{\inner{a}{x_t}}\right],\quad\text{for all } 1\leq j\leq m,
\end{align}
and proved its asymptotic convergence to the minimizer.
\citet{Iusem1992}, \citet{Lin2021}, and \citet{Frankowski2022} further proved that the 
optimization error
converges at a rate of $\widetilde{O}(1/T)$.
\end{enumerate}

Textbook first-order optimization methods, such as gradient descent and mirror descent, do not directly apply to the optimization problems \eqref{def:Lewis}, \eqref{def:Fisher}, and \eqref{def:Kelly} due to the lack of Lipschitz continuity and $L$-smoothness in their objective functions.
However, we identify a common pattern in the above algorithms:
the algorithm~\eqref{alg:CohenMGM} can be written as $x_{t+1}=x_t\odot(-\nabla f_{\text{Lew}}(x_t))^{p/2}$;
the algorithm~\eqref{alg:IntroFisherMGM} can be written as $x_{t+1}=x_t\odot(-\nabla f_{\text{Fis}}(x_t))$; 
the algorithm~\eqref{alg:KellyMGM} can be written as $x_{t+1}=x_t\odot(-\nabla f_{\text{Kel}}(x_t))$.
We refer to algorithms of the form $x_{t+1}\propto x_t\odot(-\nabla f(x_t))^{\eta}$ as the multiplicative gradient method (MGM).\footnote{In our analysis, we will impose normalization to ensure that iterates lie in the probability simplex.}
The MGM has also appeared in other applications, such as D-optimal experimental design \citep{Cohen2019,Silvey1978} and the computation of several information-theoretic quantities \citep{Arimoto1976,Tsai2024a}.

The MGM does not seem to fall under any standard optimization framework.
The connection among the above problems was previously unclear, and derivations and analyses of the MGM for these problems appear unrelated---to our knowledge, no single existing analysis applies to all these problems.
For example, 
\citet{Iusem1992} leveraged the specific structure of Kelly's portfolio problem~\eqref{def:Kelly} to analyze the iteration rule~\eqref{alg:KellyMGM}, but it is unclear how to extend his analysis to other problems.
\citet{Cohen2015b} and \citet{Chu2025} analyzed the MGM via a contraction mapping argument with respect to the Thompson metric \citep{Thompson1963}. 
The Thompson metric is defined only on the positive orthant, and hence their analysis does not apply to Kelly's portfolio problem, where the minimizer may contain zero entries.
\citet{Lee2016phdthesis} and \citet{Cohen2019} exploited the convexity of the log-gradient of the objective function, and \citet{Lin2021} extended their analysis approach to the quantum setting.
\citet{Zhao2026MGM} further extended this analysis to Euclidean Jordan algebras.
Unfortunately, as shown in Appendix~\ref{appendix:LogGradConvexCounterexample}, convexity of the log-gradient does not hold for the $\ell_p$-Lewis weight problem for $p<2$.
We will discuss other existing analysis approaches, showing that they do not directly apply to all applications mentioned above, in Section~\ref{sec:RelatedWork}.

To address this gap, we observe that the objective functions $f$ of all the above problems are $\theta$-log-homogeneous, meaning that for any scalar $r>0$, we have $f(r x) = f(x) + \theta \log r$.
Without loss of generality, we assume $\theta\in\Set{-1,1}$.
A crucial step in developing our framework is to identify the following second-order characterization, parametrized by $\theta$ and two additional scalars $\mu$ and $L$, that captures the common structure of all these objective functions:
\begin{align}
    \label{ineq:IntroHessianBounds}
    &\mu H_f(x)
    \leq\nabla^2 f(x)+\theta\nabla f(x)\nabla f(x)^{\top}
    \leq LH_f(x),\quad\text{for all } x\in\mathbb{R}_{++}^m,
\end{align}
where we require that, when $\theta=-1$, either $L\leq1$
or both $\mu\geq1$ and $L>1$.
Here, we define
\begin{align}
    H_f(x) 
    \coloneqq \mathrm{Diag}(\theta\nabla f(x)\odot x^{-1})-\nabla f(x)\nabla f(x)^{\top},
\end{align}
where $\mathrm{Diag}(v)$ denotes a diagonal matrix with the vector $v$ on its diagonal.
In our framework, we construct a surrogate objective function and a reference function and prove that the characterization~\eqref{ineq:IntroHessianBounds} implies $L$-relative smoothness and $\mu$-relative strong convexity with respect to the reference function.
This leads to a convergence rate for mirror descent of $\widetilde{O}((1-\mu/L)^T)$ for $\mu>0$ and $\widetilde{O}(1/T)$ for $\mu=0$ \citep{Bauschke2017,Lu2018b,Teboulle2018}.
Finally, we show that this mirror-descent method on the surrogate function is equivalent to the MGM on the original function $f$ and that similar convergence rates continue to hold for the original objective function.

\begin{table}[tb]
\centering
\fontsize{9}{11}\selectfont
\setlength{\tabcolsep}{2pt}
\begin{tabular*}{\linewidth}{@{\extracolsep{\fill}}
  p{\dimexpr0.24\linewidth-2\tabcolsep\relax}
  p{\dimexpr0.32\linewidth-2\tabcolsep\relax}
  p{\dimexpr0.14\linewidth-2\tabcolsep\relax}
  p{\dimexpr0.30\linewidth-2\tabcolsep\relax}
  @{}}
\hline
\noalign{\vskip 3pt}
\raggedright Problem
& \raggedright References
& \centering $\eta$
& \centering Optimization error bound
\tabularnewline

\noalign{\vskip 2pt}
\hline
\noalign{\vskip 2pt}

\raggedright $\ell_p$-Lewis weight~\eqref{def:Lewis}\newline $p\in(0,2)$
& \raggedright \citet{Cohen2015b}
& \centering $p/2$
& \centering $\widetilde O((1-p/2)^T)$
\tabularnewline

\raggedright 
& \raggedright \textbf{Our Results}
& \centering $p/2$
& \centering $\widetilde O((1-p/2)^T)$
\tabularnewline

\noalign{\vskip 2pt}
\hline
\noalign{\vskip 2pt}

\raggedright $\ell_p$-Lewis weight~\eqref{def:Lewis}\newline $p\in(2,4)$
& \raggedright \citet{Cohen2015b}
& \centering $p/2$
& \centering $\widetilde O((p/2-1)^T)$
\tabularnewline

\raggedright 
& \raggedright \citet{Lee2016phdthesis}
& \centering $1$
& \centering $\widetilde O(1/T)$
\tabularnewline


\raggedright 
& \raggedright \textbf{Our Results}
& \centering $1$
& \centering $\widetilde O((1-2/p)^T)$
\tabularnewline

\noalign{\vskip 2pt}
\hline
\noalign{\vskip 2pt}

\raggedright $\ell_p$-Lewis weight~\eqref{def:Lewis}\newline $p\geq 4$
& \raggedright \citet{Lee2016phdthesis}
& \centering $1$
& \centering $\widetilde O(1/T)$
\tabularnewline


\raggedright 
& \raggedright \textbf{Our Results}
& \centering $1$
& \centering $\widetilde O((1-2/p)^T)$
\tabularnewline

\noalign{\vskip 2pt}
\hline
\noalign{\vskip 2pt}

\raggedright market equilibrium~\eqref{def:Fisher}\newline $\rho\leq -1$
& \raggedright \citet{Cheung2018c}
& \centering $1$
& \centering $\widetilde O((\rho/(\rho-1))^T)$
\tabularnewline

\raggedright 
& \raggedright \textbf{Our Results}
& \centering $1$
& \centering $\widetilde O((\rho/(\rho-1))^T)$
\tabularnewline

\noalign{\vskip 2pt}
\hline
\noalign{\vskip 2pt}

\raggedright market equilibrium~\eqref{def:Fisher}\newline $\rho\in(-1,0)$
& \raggedright \citet{Cheung2018c}
& \centering $1$
& \centering $\widetilde O((\rho/(\rho-1))^T)$
\tabularnewline

\raggedright 
& \raggedright \citet{Chu2025}
& \centering $1-\rho$
& \centering $\widetilde O((-\rho)^T)$
\tabularnewline

\raggedright 
& \raggedright \textbf{Our Results}
& \centering $1$
& \centering $\widetilde O((\rho/(\rho-1))^T)$
\tabularnewline

\noalign{\vskip 2pt}
\hline
\noalign{\vskip 2pt}

\raggedright market equilibrium~\eqref{def:Fisher}\newline $\rho\in(0,1)$
& \raggedright \citet{Chu2025}
& \centering $1-\rho$
& \centering $\widetilde O\left(\rho^T\right)$
\tabularnewline

\raggedright 
& \raggedright \textbf{Our Results}
& \centering $1-\rho$
& \centering $\widetilde O(\rho^T)$
\tabularnewline

\noalign{\vskip 2pt}
\hline
\noalign{\vskip 2pt}

\raggedright Kelly's portfolio~\eqref{def:Kelly}
& \raggedright \citet{Iusem1992}, \citet{Lin2021}, and \citet{Frankowski2022}
& \centering $1$
& \centering $\widetilde O(1/T)$
\tabularnewline

\raggedright 
& \raggedright \textbf{Our Results}
& \centering $1$
& \centering $\widetilde O(1/T)$
\tabularnewline

\noalign{\vskip 2pt}
\hline
\noalign{\vskip 2pt}

\raggedright D-optimal design
& \raggedright \citet{Cohen2019}
& \centering $1$
& \centering $\widetilde O(1/T)$
\tabularnewline

\raggedright 
& \raggedright \textbf{Our Results}
& \centering $1$
& \centering $\widetilde O(1/T)$
\tabularnewline

\noalign{\vskip 2pt}
\hline
\noalign{\vskip 2pt}

\raggedright R\'enyi capacity
& \raggedright \citet{Arimoto1976}
& \centering $\alpha/(1-\alpha)$
& \centering $\widetilde O(1/T)$
\tabularnewline

\raggedright 
& \raggedright \textbf{Our Results}
& \centering $\alpha/(1-\alpha)$
& \centering $\widetilde O(1/T)$
\tabularnewline

\noalign{\vskip 2pt}
\hline
\noalign{\vskip 2pt}

\raggedright Petz-R\'enyi capacity
& \raggedright \textbf{Our Results}
& \centering $\alpha/(1-\alpha)$
& \centering $\widetilde O(1/T)$
\tabularnewline

\noalign{\vskip 2pt}\hline
\end{tabular*}
\caption{
    Optimization error bound of the MGM after $T$ iterations.
    The parameter $\eta$ appears in the exponent of the gradient term in the MGM update.
}
\label{tab:MGMComparison}
\end{table}

In Section~\ref{sec:Applications}, we apply  our framework to various problems.
Table~\ref{tab:MGMComparison} summarizes our results and compares them with prior work.
We recover known algorithms and their optimization error bounds\footnote{For the market equilibrium and $\ell_p$-Lewis weight problems, some prior work also obtained convergence guarantees in terms of a different error measure. 
A discussion can be found in Appendix~\ref{appendix:ComparisonErrorMeasures}.} for the problems of computing $\ell_p$-Lewis weights~\eqref{def:Lewis} with $p<2$, market equilibria~\eqref{def:Fisher}, Kelly's portfolios~\eqref{def:Kelly}, D-optimal designs, and R\'{e}nyi capacities---characterizing the optimal error exponent for channel coding in classical communication.

Furthermore, to illustrate the insights provided by our framework, in Section~\ref{sec:Applications}, we show that the MGM yields state-of-the-art performance for computing $\ell_p$-Lewis weights with $p>2$ and Petz-R\'{e}nyi capacities, which characterize the optimal error exponent for channel coding in classical-quantum communication---novel results for the MGM to our knowledge.\footnote{We are aware of the concurrent work of \citet{Padmanabhan2026}, which establishes the same convergence guarantee as ours for the MGM for computing $\ell_p$-Lewis weights. A discussion can be found in Remark~\ref{remark:ConcurrentWorksLewisWeights}.}

Our framework is primarily inspired by \citet{Lee2020}. 
They established the second-order characterization~\eqref{ineq:IntroHessianBounds} specifically for the Lewis weight problem~\eqref{def:Lewis}, although the algorithm they analyzed is not an instance of the MGM.
This characterization is analogous to the standard Hessian bound, $\mu I_m\leq\nabla^2 f(x)\leq LI_m$, for $L$-smoothness and $\mu$-strong convexity, where additional  rank-one terms are introduced and the identity matrix $I_m$ is replaced with a diagonal matrix that depends on both the reference point $x$ and the objective function itself.
We verify that all the aforementioned applications satisfy the characterization~\eqref{ineq:IntroHessianBounds}, which motivates us to use it to develop a unified framework for the MGM.

\section{Related Work}
\label{sec:RelatedWork}
Although the MGM has been studied in various contexts, its derivations and formulations are often quite different:
the algorithm~\eqref{alg:CohenMGM} for the $\ell_p$-Lewis weight problem~\eqref{def:Lewis} and the algorithm~\eqref{alg:KellyMGM} for Kelly's portfolio problem~\eqref{def:Kelly} were derived by \citet{Cohen2015b} and \citet{Cover1984}, respectively, as fixed-point iterations;
the same algorithm~\eqref{alg:KellyMGM} was derived by \citet{Shepp1982} as an instance of expectation maximization, and by \citet{Csiszar1984} using alternating I-projection;
the algorithm~\eqref{alg:IntroFisherMGM} for the equilibrium problem~\eqref{def:Fisher} was derived by \citet{Cheung2018c} as an instance of mirror descent for an equivalent optimization problem.
Our framework unifies these algorithms under a general MGM expression that admits a principled convergence analysis.

The work of \citet{Cheung2018c} and \citet{Frankowski2022} on the equilibrium problem~\eqref{def:Fisher} and Kelly's portfolio problem~\eqref{def:Kelly} is perhaps the closest to our framework.
\citet{Cheung2018c} derived the algorithm~\eqref{alg:IntroFisherMGM} by applying mirror descent to minimize a surrogate function of the original objective function, which, as discussed in Section~\ref{sec:Intro}, is equivalent to applying the MGM to the original problem.
Similarly, \citet{Frankowski2022} recovered the algorithm~\eqref{alg:KellyMGM} by applying mirror descent to a surrogate  function.
While these works also provide an interpretation of the MGM as mirror descent tailored to the two specific problems, their surrogate functions require lifting the original dimension from $m$ to $mn$ and to potentially infinite dimensions, respectively,  and it is unclear whether these lifting mechanisms extend to other problems.
In contrast, our framework performs mirror descent on a different surrogate function without lifting the input dimension.

Mirror descent has known connections to natural gradient descent \citep{Raskutti2015}, alternating minimization \citep{Leger2023}, dual preconditioned methods \citep{Maddison2021,Laude2025,Oikonomidis2025GeneralSmooth}, and difference-of-convex methods \citep{Faust2023,Oikonomidis2025Forward}.
Through these established connections, our mirror-descent interpretation also links the MGM to these broader frameworks, which may be of independent interest.

The optimal error exponent in channel coding is characterized by the \emph{R\'{e}nyi capacity}. 
Interestingly, for the problem of computing the R\'{e}nyi capacity, the operational meanings of the objective function and associated surrogate function in our framework have been well established in the literature \citep{Gallager1965,Augustin1978,Csiszar1995,Nakiboglu2019}.
However, the connection between minimizing the two functions from an algorithmic perspective remains unexplored.
The algorithm of \citet{Arimoto1976} for computing the R\'{e}nyi capacity can be viewed as the MGM for minimizing $f$, but its analysis does not adopt the surrogate function perspective. 

While the R\'{e}nyi capacity admits an extension, called the Petz-R\'{e}nyi capacity, to classical-quantum communication \citep{Cheng2022}, the analysis of \citet{Arimoto1976}  does not admit a direct extension to this case, where the non-commutativity inherent in the mathematical formalism of quantum mechanics poses significant challenges.
In contrast, our framework directly applies to this quantum extension without additional effort.

\section{Preliminaries}
\label{sec:Preliminaries}
\subsection{Notation}
We denote the nonnegative and positive orthants in $\mathbb{R}^m$  by $\mathbb{R}_{+}^m$ and $\mathbb{R}_{++}^m$, respectively.
We denote the set of all Hermitian matrices in $\mathbb{C}^{m\times m}$ by $\mathcal{H}_m$.
We denote the positive semidefinite and positive definite cones of $\mathcal{H}_m$ by $\mathcal{H}_{m,+}$ and $\mathcal{H}_{m,++}$, respectively.
We denote the set of all Hermitian positive semidefinite matrices with unit trace in $\mathbb{C}^{m\times m}$ by $\mathcal{D}_m$.
We denote by $\mathbf{1}_m$ the all-ones vector in $\mathbb{R}^m$.
We denote by $I_m$ the identity matrix in $\mathbb{C}^{m\times m}$.

For any set $\mathcal{S}$, we denote by $\mathrm{ri}(\mathcal{S})$ its relative interior.
For any function $f\colon \mathbb{R}^m \to (-\infty,\infty]$, we denote its subdifferential and convex conjugate by $\partial f$ and $f^{*}$, respectively.
For any function $f\colon \mathbb{R} \to \mathbb{R}$ and 
vector \( v \in \mathbb{R}^m \), we define $f(v)$
as 
the 
\( m \)-dimensional vector $(f(v[1]), \dots, f(v[m]))$. 
Similarly, for such a function \( f \) and \( Q \in \mathcal{H}_m \), we define \( f(Q) \) as \( \sum_{i=1}^m f(\lambda_i)  P_i \), where \( Q = \sum_{i=1}^m \lambda_i   P_i \) is the eigendecomposition of \( Q \). 
We denote by $\oslash$ entry-wise division.
We denote by $D_{f}(y,x)=f(y)-f(x)-\inner{\nabla f(x)}{y-x}$ and $D_{g}(Y,X)=g(Y)-g(X)-\mathrm{Tr}\left(\nabla g(X)(Y-X)\right)$ the Bregman divergences induced by functions $f\colon\mathbb{R}^m\to(-\infty,\infty]$ and $g\colon\mathcal{H}_m\to(-\infty,\infty]$ between $x$ and $y$, and between $X$ and $Y$, respectively.
We denote by $\RenyiKL{y}{x}=\inner{y}{\log(y)-\log(x)}$ the KL divergence  between $x,y\in\Delta_m$.

\subsection{Mirror Descent}
Mirror descent is a standard optimization method for solving constrained convex optimization problems $\tilde{q}_{\star}\in\argmin_{\tilde{q}\in\tilde{\mathcal{C}}} \tilde{G}(\tilde{q})$.
In particular, it requires first choosing a reference function $\tilde{R}$ and a step size $\eta>0$, and then updating the iterates by
\begin{align}
    \label{alg:GeneralMD}
    \tilde{q}_{t+1}
    \in \argmin_{q\in\tilde{\mathcal{C}}} \eta\inner{\nabla \tilde{G}(\tilde{q}_t)}{q} + D_{\tilde{R}}(q,\tilde{q}_t).
\end{align}

Relative smoothness and relative strong convexity are two standard assumptions for the convergence analysis of mirror descent \citep{Bauschke2017,Lu2018b,Teboulle2018}.
On the constraint set $\tilde{\mathcal{C}}$, we say a function $\tilde{G}$ is $L$-relatively smooth with respect to $\tilde{R}$ if the function $L\tilde{R}-\tilde{G}$ is convex, and $\mu$-relatively strongly convex with respect to $\tilde{R}$ if the function $\tilde{G}-\mu\tilde{R}$ is convex.

Existing convergence analyses typically assume that the reference function is differentiable.
Although \citet{Lu2018b} allow both the reference function $\tilde{R}$ and the objective function $\tilde{G}$ to be non-differentiable, they require $\tilde{R}-\tilde{G}$ to be differentiable.
While our setting may not satisfy these differentiability assumptions, we can still establish convergence guarantees by slightly extending the analysis of \citet{Teboulle2018} to handle subgradients in place of gradients.

\section{Main Results}
\label{sec:MainResults}
This section presents our framework and establishes
convergence guarantees for the MGM.

We consider the following optimization problem:
\begin{align}
    \label{def:GeneralProblem}
    x_{\star}\in\argmin_{x\in\Delta_m} f(x),
\end{align}
where $f$ satisfies Assumption~\ref{assumption:BasicAssumptions} below and the  second-order characterization~\eqref{ineq:IntroHessianBounds} for some $\mu,L$ with $0\leq\mu\leq L$ and $L>0$.
\begin{assumption}
    \label{assumption:BasicAssumptions}
    The function $f\colon\mathbb{R}^m\to(-\infty,\infty]$ is continuous on $\mathbb{R}_{+}^m\setminus\Set{0}$, is also $\theta$-log-homogeneous for some $\theta\in\Set{-1,1}$, and satisfies $\theta\nabla f(x)>0$ for all $x\in\mathbb{R}_{++}^m$.
\end{assumption}


    

We consider solving the optimization problem~\eqref{def:GeneralProblem} by the multiplicative gradient method (MGM):
\begin{align}
    \label{alg:IntroMGM}
    x_{t+1}
    =\frac{x_t\odot(\theta\nabla f(x_t))^{-\theta\eta}}{\norm{x_t\odot(\theta\nabla f(x_t))^{-\theta\eta}}_1}.
\end{align}

Our convergence analysis is based on viewing the MGM as mirror descent with a reference function applied to minimizing a surrogate function.
The construction of the surrogate function and reference function will rely on the following transformation of the original objective function $f$:
\begin{align}
    \label{def:varphi}
    \varphi(y)\coloneqq c_f f(\exp(c_f\theta y)),
\end{align}
where we define the scalar $c_f$ as $-1$ when $\theta=-1$ with $0\leq \mu\leq L\leq 1$, and as $1$ when either $\theta=-1$ with $1\leq \mu\leq L$ and $L>1$, or $\theta=1$.
\begin{lemma}
    \label{lemma:ConjugateFunctionWellDefine}
    The function $\varphi(y)$ is convex.
\end{lemma}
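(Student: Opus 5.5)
The plan is to compute the Hessian of $\varphi$ by the chain rule and show it is positive semidefinite on all of $\mathbb{R}^m$, using the second-order characterization~\eqref{ineq:IntroHessianBounds} together with a positive semidefiniteness property of $H_f$. Since $y\mapsto\exp(c_f\theta y)$ maps $\mathbb{R}^m$ onto $\mathbb{R}_{++}^m$, where $f$ is twice differentiable (as implicitly assumed in~\eqref{ineq:IntroHessianBounds}), $\varphi$ is finite and $C^2$ on $\mathbb{R}^m$. Convexity therefore reduces to $\nabla^2\varphi(y)\succeq 0$.

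\textbf{Hessian of $\varphi$.} Write $x=\exp(c_f\theta y)$, $g=\nabla f(x)$ and $X=\mathrm{Diag}(x)$. Then $\partial x[i]/\partial y[i]=c_f\theta x[i]$. Using $c_f^2=\theta^2=1$, one gets $\nabla\varphi(y)=\theta\, x\odot g$. Differentiating once more gives
\begin{align}
    \nabla^2\varphi(y)=c_f\,X\bigl(\nabla^2 f(x)+\mathrm{Diag}(g\odot x^{-1})\bigr)X .
\end{align}
Since $X$ is invertible, it suffices to show that $c_f\bigl(\nabla^2 f(x)+\mathrm{Diag}(g\odot x^{-1})\bigr)\succeq 0$ for every $x\in\mathbb{R}_{++}^m$.

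\textbf{Key auxiliary fact: $H_f(x)\succeq 0$.} Differentiating $f(rx)=f(x)+\theta\log r$ in $r$ at $r=1$ gives the Euler identity $\inner{\nabla f(x)}{x}=\theta$, that is, $\sum_i \theta g[i]x[i]=1$. By Assumption~\ref{assumption:BasicAssumptions}, $\theta g>0$. Cauchy--Schwarz then gives, for any $v$,
\begin{align}
    \inner{g}{v}^2=\Bigl(\sum_i \theta g[i]\,v[i]\Bigr)^2\le\Bigl(\sum_i \tfrac{\theta g[i]}{x[i]}v[i]^2\Bigr)\Bigl(\sum_i\theta g[i]x[i]\Bigr)=v^\top\mathrm{Diag}(\theta g\odot x^{-1})v .
\end{align}
Hence $H_f(x)=\mathrm{Diag}(\theta g\odot x^{-1})-gg^\top\succeq 0$. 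I expect this to be the main (though mild) step, because it is what turns the two-sided bound~\eqref{ineq:IntroHessianBounds} into a definite sign.

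\textbf{Case analysis.}
\begin{itemize}
    \item If $\theta=1$ (so $c_f=1$): the lower bound gives $\nabla^2 f\succeq\mu H_f-gg^\top$. Then $\nabla^2 f+\mathrm{Diag}(g\odot x^{-1})\succeq \mu H_f+H_f=(1+\mu)H_f\succeq0$.
    \item If $\theta=-1$ with $\mu\ge1$ (so $c_f=1$): the lower bound reads $\nabla^2 f\succeq \mu H_f+gg^\top$. Since $\mathrm{Diag}(g\odot x^{-1})=-\mathrm{Diag}(-g\odot x^{-1})$, we get $\nabla^2 f+\mathrm{Diag}(g\odot x^{-1})\succeq \mu H_f-H_f=(\mu-1)H_f\succeq 0$.
    \item If $\theta=-1$ with $L\le1$ (so $c_f=-1$): the upper bound $\nabla^2 f\preceq LH_f+gg^\top$ gives $\nabla^2 f+\mathrm{Diag}(g\odot x^{-1})\preceq (L-1)H_f\preceq0$. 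Multiplying by $c_f=-1$ yields the required positive semidefiniteness.
\end{itemize}
In every case $\nabla^2\varphi\succeq0$, so $\varphi$ is convex.
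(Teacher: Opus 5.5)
Your proof is correct, but it takes a different route from the paper.

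\emph{Your route.} You verify convexity directly at second order. The chain rule gives $\nabla^2\varphi(y)=c_f X\bigl(\nabla^2 f(x)+\mathrm{Diag}(\nabla f(x)\oslash x)\bigr)X$. The Euler identity together with Cauchy--Schwarz gives $H_f(x)\succeq 0$. The relevant one-sided bound in~\eqref{ineq:IntroHessianBounds} then turns the bracket into $(1+\mu)H_f$, $(\mu-1)H_f$, or $(L-1)H_f$, with the correct sign after multiplying by $c_f$.

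\emph{The paper's route.} It passes through the zeroth-order reformulation of Lemma~\ref{lemma:ZeroOrderCharacterization}, namely concavity of $U^f_{\theta,L}$ and convexity of $U^f_{\theta,\mu}$. Lemma~\ref{lemma:USupLinear} uses the $1$-homogeneity of $U^f_{\theta,c}$ to write it as an infimum or supremum of linear functionals. The paper then expresses $\varphi$ as a pointwise supremum of weighted log-sum-exp functions. The cases $L=1$ and $\mu=1$ are handled separately, since there $\varphi$ reduces to $U^f_{-1,1}$ up to sign and reflection.

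\emph{What each buys.} Your argument is shorter and more elementary. It also makes explicit that positive semidefiniteness of $H_f$ is what converts the two-sided matrix bound into a definite sign for $\nabla^2\varphi$. The paper's detour produces reusable machinery. The same variational representations of $U^f_{\theta,c}$ drive the proofs of Lemma~\ref{lemma:RelativeSmoothnessStrongConvexity} and Lemma~\ref{lemma:FullDomainForThetaMinusOne}. The zeroth-order form is also what is actually verified in the Petz--R\'enyi application.

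One minor point: your second case should read $\mu\geq 1$ and $L>1$ to match the definition of $c_f$. The overlap $\mu=L=1$ is harmless, because there the bracket vanishes and $\nabla^2\varphi\equiv 0$.
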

The proof of Lemma~\ref{lemma:ConjugateFunctionWellDefine} is deferred to Appendix~\ref{appendix:ProofConjugateFunctionWellDefine}.

We apply mirror descent with a reference function to the following optimization problem:
\begin{align}
    \label{def:ConjugateProblem}
    q_{\star}\in\argmin_{q\in\mathcal{C}} G^{f}(q).
\end{align}
Here, the associated surrogate function is defined by
$G^f(q)\coloneqq -\theta\varphi^{*}(q)+c_f\inner{q}{\log(q)}$
for $q\in\mathcal{C}$ and by $G^f(q)\coloneqq+\infty$ otherwise,
where $\mathcal{C}\coloneqq\dom(\varphi^{*})\cap\Delta_m$.
The reference function is defined as the convex conjugate of $\varphi$.

The following lemma is one of our key technical results in this section, establishing  the relative smoothness and relative strong convexity properties of the associated surrogate function $G^f$ with respect to the reference function $\varphi^*$.
\begin{lemma}
    \label{lemma:RelativeSmoothnessStrongConvexity}
    Suppose that Assumption~\ref{assumption:BasicAssumptions} and the second-order characterization~\eqref{ineq:IntroHessianBounds} are satisfied by the objective function $f$, and that either $\theta=-1$ with $0\leq \mu\leq L\leq 1$ and $L>0$, $\theta=-1$ with $1\leq \mu\leq L$ and $L>1$, or $\theta=1$ with $L>0$ and $0\leq\mu\leq L$.
    Then, the associated surrogate function $G^{f}$ is $L$-smooth and $\mu$-strongly convex relative to the reference function $\varphi^{*}$ on  $\mathcal{C}$, where $\mathcal{C}\coloneqq\dom(\varphi^{*})\cap\Delta_m$.
\end{lemma}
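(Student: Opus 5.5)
The plan is to reduce both claims to Hessian comparisons between $\varphi^*$ and the negative entropy $h(q)\coloneqq\inner{q}{\log(q)}$ on the simplex. Since $G^f=-\theta\varphi^*+c_f h$ on $\mathcal{C}$, we have
\begin{align}
L\varphi^*-G^f=(L+\theta)\varphi^*-c_f h,
\qquad
G^f-\mu\varphi^*=c_f h-(\mu+\theta)\varphi^* .
\end{align}
So it suffices to sandwich $\nabla^2\varphi^*$ between multiples of $\nabla^2 h$ along directions tangent to $\Delta_m$, i.e.\ in $\mathbf{1}_m^\perp$. These comparisons will be obtained by dualizing a sandwich for $\nabla^2\varphi$, which comes straight from \eqref{ineq:IntroHessianBounds}.

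\emph{Step 1 (structure of $\varphi$).} Log-homogeneity gives $\varphi(y+s\mathbf{1}_m)=c_f\bigl(f(\exp(c_f\theta y))+\theta c_f\theta s\bigr)=\varphi(y)+s$. Hence $\dom\varphi^*$ lies in the hyperplane $\{\inner{q}{\mathbf{1}_m}=1\}$ and $\nabla^2\varphi\,\mathbf{1}_m=0$. With $x=\exp(c_f\theta y)$, a direct computation gives
\begin{align}
\nabla\varphi(y)=\theta\, x\odot\nabla f(x)\eqqcolon q .
\end{align}
By Assumption~\ref{assumption:BasicAssumptions}, $q>0$, and by Euler's relation $\inner{x}{\nabla f(x)}=\theta$ it sums to one, so $q\in\mathrm{ri}(\Delta_m)$. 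Differentiating once more, with $X=\mathrm{Diag}(x)$,
\begin{align}
\nabla^2\varphi(y)=c_f\bigl(\theta Q+X(\nabla^2 f+\theta\nabla f\nabla f^\top)X\bigr),
\qquad Q\coloneqq \mathrm{Diag}(q)-qq^\top .
\end{align}
Moreover $XH_f(x)X=Q$. Conjugating \eqref{ineq:IntroHessianBounds} by $X$ therefore yields
\begin{align}
(\theta+\mu)Q\preceq c_f\nabla^2\varphi(y)\preceq(\theta+L)Q .
\end{align}
Going through the three cases, using the definition of $c_f$:
\begin{itemize}
\item if $\theta=1$, then $(1+\mu)Q\preceq\nabla^2\varphi\preceq(1+L)Q$;
\item if $\theta=-1$ and $L\le1$, then $(1-L)Q\preceq\nabla^2\varphi\preceq(1-\mu)Q$;
\item if $\theta=-1$ and $\mu\ge1$, then $(\mu-1)Q\preceq\nabla^2\varphi\preceq(L-1)Q$.
\end{itemize}
In every case $aQ\preceq\nabla^2\varphi\preceq bQ$ with $0\le a\le b$. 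As a byproduct this reproves Lemma~\ref{lemma:ConjugateFunctionWellDefine}.

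\emph{Step 2 (dualization).} The matrix $Q$ is exactly the inverse of $\nabla^2 h(q)=\mathrm{Diag}(q)^{-1}$ restricted to $\mathbf{1}_m^\perp$, in the sense that $Q$ is the Moore--Penrose-type inverse of the compression of $\mathrm{Diag}(q)^{-1}$ to the tangent space. At $q=\nabla\varphi(y)$ the Hessian of $\varphi^*$ on that affine hyperplane is the inverse of $\nabla^2\varphi(y)$ restricted to $\mathbf{1}_m^\perp$, provided $a>0$ so that this restriction is nondegenerate. Since matrix inversion reverses the Loewner order on positive definite operators, we get
\begin{align}
b^{-1}\nabla^2h\preceq\nabla^2\varphi^*\preceq a^{-1}\nabla^2h \quad\text{on } \mathbf{1}_m^\perp .
\end{align}
Plugging these bounds into the two displayed differences gives nonnegative tangent Hessians in each case. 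For example, when $\theta=1$ we need $(L+1)\nabla^2\varphi^*\succeq\nabla^2h$ and $(1+\mu)\nabla^2\varphi^*\preceq\nabla^2h$. When $\theta=-1$ and $c_f=-1$ we need $(1-L)\nabla^2\varphi^*\preceq\nabla^2h$ and $(1-\mu)\nabla^2\varphi^*\succeq\nabla^2h$. The degenerate values ($L=1$, or $a=0$) will be handled separately: there one side is simply $\pm h$ or follows from convexity of $\varphi^*$ alone.

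\emph{Main obstacle.} I expect the main difficulty to be rigor rather than algebra. The function $\varphi^*$ need not be twice differentiable, the image of $\nabla\varphi$ may miss parts of $\mathcal{C}$, and points of $\mathcal{C}$ on the boundary of $\Delta_m$, where $q$ has zero entries, are not reached by any $y$. To avoid inverting Hessians I would first try a purely conjugate-duality formulation. For instance, when $\theta=1$, the inequality $\nabla^2\varphi\preceq bQ$ says that $b\,\mathrm{lse}-\varphi$ is convex, where $\mathrm{lse}(y)=\log\inner{\mathbf{1}_m}{\exp(y)}$ and $\mathrm{lse}^*=h+\iota_{\Delta_m}$. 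The standard fact that, for $\varphi$ convex with $b\,\psi-\varphi$ convex, $\varphi^*-b^{-1}\ldots$ is convex (the duality between relative smoothness and relative strong convexity of conjugates) would then give the claims directly on $\mathcal{C}$. If that fact needs differentiability I cannot verify, the fallback is the Hessian argument above. I would prove convexity on $\nabla\varphi(\mathbb{R}^m)$, show this set is dense in $\mathcal{C}$ and convex enough (it contains $\mathrm{ri}(\mathcal{C})$ via Legendre-type properties of $\varphi$ on the hyperplane), and then extend to all of $\mathcal{C}$ by lower semicontinuity of $\varphi^*$ and continuity of $h$ on $\Delta_m$.
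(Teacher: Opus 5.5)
\textbf{There is a genuine gap: the degenerate cases, which are the main cases for $\theta=-1$.}

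Your Step~1 sandwich $(\theta+\mu)Q\preceq c_f\nabla^2\varphi\preceq(\theta+L)Q$ is correct. Because $Q$ is evaluated at $q=\nabla\varphi(y)$, it is the pseudo-inverse of $\nabla^2 h$ at the very point where $\nabla^2\varphi^*(q)$ inverts $\nabla^2\varphi(y)$. So your Legendre-inversion argument is a valid alternative to the paper's route whenever $a>0$, i.e.\ for $\theta=1$, and for $\theta=-1$ with $L<1$ or $\mu>1$. One small correction there: extending to the boundary of $\mathcal{C}$ needs continuity of $\varphi^*$ along segments from relative-interior points, not just lower semicontinuity, because $\varphi^*$ enters one of the differences with a minus sign.

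The gap is the case $a=0$, which you dismiss incorrectly.
\begin{itemize}
\item For $\theta=-1$, $L=1$, $\mu<1$, the smoothness side is indeed $h$. The strong-convexity side, however, asks that $(1-\mu)\varphi^*-h$ be convex, i.e.\ $\nabla^2\varphi^*\succeq(1-\mu)^{-1}\nabla^2h$. This is neither $\pm h$ nor a consequence of convexity of $\varphi^*$. Your only route to it is inverting $\nabla^2\varphi\preceq(1-\mu)Q$, and that breaks because $\nabla^2\varphi$ can be singular on $\mathbf{1}_m^\perp$. For example, Kelly with $P$ uniform on $\{(1,1,0),(0,0,1)\}$ gives $\varphi(y)=\tfrac12\log(e^{y_1}+e^{y_2})+\tfrac12 y_3$. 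Then $\nabla\varphi$ is not a diffeomorphism onto $\mathrm{ri}(\mathcal{C})$, and $\varphi^*$ need not be $C^2$ there.
\item Symmetrically, for $\theta=-1$, $\mu=1<L$ the open side is convexity of $(L-1)\varphi^*-h$.
\item For $\mu=L=1$ one side is $-h$. That is convex only because $\mathcal{C}$ is then a singleton, which you would have to prove.
\end{itemize}
These are not edge cases. Every $\theta=-1$ application in the paper (Kelly, D-optimal design, Lewis weights, Fisher markets) has $\mu=1$ or $L=1$. When $\mu=0$, the missing statement is the convexity of $G^f$ itself.

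Your fallback conjugate-duality shortcut does not close this gap. The bound $\nabla^2\varphi(y)\preceq bQ(\nabla\varphi(y))$ is not the convexity of $b\,\mathrm{lse}-\varphi$; that would need $Q$ at $\mathrm{softmax}(y)$, not at $\nabla\varphi(y)$. Also, relative smoothness of $\varphi$ with respect to $\psi$ does not in general dualize to relative strong convexity of $\varphi^*$ with respect to $\psi^*$.

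The missing idea is a zero-order reformulation that never inverts a Hessian. The paper does this through $U^f_{\theta,c}$ and Lemma~\ref{lemma:ZeroOrderCharacterization}.
\begin{itemize}
\item Set $\psi(u)=b\varphi(u/b)$, so that $\psi^*=b\varphi^*$. The bound $\nabla^2\varphi\preceq bQ(\nabla\varphi)$ is equivalent to concavity of the $1$-homogeneous function $G(w)=\exp(\psi(\log w))$ on $\mathbb{R}^m_{++}$. Hence $G(w)=\inf_z\inner{\nabla G(z)}{w}$, and the Gibbs variational principle gives $b\varphi^*(q)-h(q)=\sup_z\inner{q}{-\log\nabla G(z)}$ on $\Delta_m$. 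This is a supremum of linear functions, and it needs no nondegeneracy at all.
\item Likewise, $\nabla^2\varphi\succeq aQ(\nabla\varphi)$ with $a>0$ becomes convexity of $w\mapsto\exp(a\varphi(\log(w)/a))$. Then $h-a\varphi^*$ is a partial minimization over $z$ of the jointly convex function $\inner{q}{\log q-\log z}+\exp(a\varphi(\log(z)/a))-1$.
\end{itemize}
This handles singular Hessians and boundary points of $\Delta_m$ uniformly.
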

The proof of Lemma~\ref{lemma:RelativeSmoothnessStrongConvexity} is deferred to Appendix~\ref{appendix:ProofRelativeSmoothnessStrongConvexity}.

A challenge arises in implementing mirror descent: 
the function $\varphi^*$ may not have closed-form expressions at all points.
To address this issue, we note that, by convex conjugacy, we have
\begin{align}
    \label{eq:Conjugate}
    &\varphi^{*}(\nabla \varphi(c_f\theta \log(x)))
    =\inner{c_f\theta \log(x)}{\nabla \varphi(c_f\theta \log(x))}-c_f f(x),\nonumber\\
    &c_f\theta \log(x)\in\partial \varphi^{*}(\nabla \varphi(c_f\theta \log(x))),\quad\text{for all } x\in\mathbb{R}_{++}^m.
\end{align}
This allows us to perform mirror descent with closed-form updates, provided that the iterates can be represented in the form  $\nabla \varphi(c_f\theta \log(x))$.
The following lemma, another key result in this section, shows that the MGM iterates yield such a representation, thereby not only facilitating the implementation of mirror descent but also establishing its connection to the MGM.
\begin{lemma}
    \label{lemma:MGMEqConjugateMD}
    Let the function $f$ and the set $\mathcal{C}$ be as in Lemma~\ref{lemma:RelativeSmoothnessStrongConvexity},
    and let $x_t$ be the iterates generated by the MGM~\eqref{alg:IntroMGM} with $\eta=1/L$ and the initial point $x_1\in\mathrm{ri}(\Delta_m)$.
    Furthermore, let $q_t\coloneqq\nabla \varphi(c_f\theta \log(x_t))$ for all $t$.
    Then, for all $t$, we have $q_t\in\mathcal{C}$, $r_t\in\partial \varphi^{*}(q_t)$, and $g_t\in\partial G^{f}(q_t)$, where
    \begin{align}
        r_t
        \coloneqq c_f\theta\log(x_t),\quad g_t\coloneqq -c_f\log(x_t)+c_f\log(q_t)+c_f\mathbf{1}_m.
    \end{align}
    Moreover, with the subgradient choices $\nabla\varphi^{*}(q_t)=r_t$ and $\nabla G^{f}(q_t)=g_t$, the sequence $q_t$ follows mirror descent~\eqref{alg:GeneralMD} with $\tilde{G}=G^f$, $\tilde{R}=\varphi^{*}$, $\tilde{\mathcal{C}}=\mathcal{C}$, $\tilde{q}_t=q_t$, and the same step size $\eta=1/L$.
\end{lemma}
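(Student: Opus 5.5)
The plan is to verify each claim by direct computation, relying on the chain rule for $\varphi$ and on Euler's identity for log-homogeneous functions.

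\textbf{Membership and the subgradient of $\varphi^{*}$.} Since $c_f^2\theta^2=1$, we have $\exp(c_f\theta\cdot c_f\theta\log(x))=x$. The chain rule then gives $\nabla\varphi(y)=\theta\exp(c_f\theta y)\odot\nabla f(\exp(c_f\theta y))$. Hence
\begin{align}
q_t=\theta x_t\odot\nabla f(x_t).
\end{align}
By Assumption~\ref{assumption:BasicAssumptions}, $q_t>0$ entrywise, since $x_t\in\mathrm{ri}(\Delta_m)$ is preserved by the MGM update. Differentiating $f(rx)=f(x)+\theta\log r$ at $r=1$ gives Euler's identity $\inner{x}{\nabla f(x)}=\theta$. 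Therefore $\mathbf{1}_m^{\top}q_t=\theta^2=1$, so $q_t\in\Delta_m$.

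Since $q_t$ is a gradient of the convex function $\varphi$ (Lemma~\ref{lemma:ConjugateFunctionWellDefine}), Fenchel's equality shows that $\varphi^{*}(q_t)$ is finite. This gives $q_t\in\dom(\varphi^{*})$, hence $q_t\in\mathcal{C}$. The inclusion $r_t\in\partial\varphi^{*}(q_t)$ is exactly the second line of \eqref{eq:Conjugate}.

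\textbf{The subgradient of $G^{f}$.} Formally, $g_t=-\theta r_t+c_f(\log(q_t)+\mathbf{1}_m)$, because $-\theta c_f\theta\log(x_t)=-c_f\log(x_t)$. Justifying $g_t\in\partial G^f(q_t)$ is the step I expect to be the main obstacle. The difficulty is that $-\theta\varphi^{*}$ is concave when $\theta=1$, and $c_f\inner{q}{\log q}$ is concave when $c_f=-1$, so the subdifferential sum rule does not apply directly.

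My approach is to use Lemma~\ref{lemma:RelativeSmoothnessStrongConvexity}. The functions $G^f-\mu\varphi^{*}$ and $L\varphi^{*}-G^f$ are convex on $\mathcal{C}$, and the entropy term is differentiable at $q_t>0$. Write $G^f=\frac{1}{L-\mu}\bigl[L(G^f-\mu\varphi^{*})+\mu(L\varphi^{*}-G^f)\bigr]$ when $\mu<L$, or argue case by case on the signs of $\theta$ and $c_f$. In each case, express $G^f$ as a convex function plus a function differentiable at $q_t$, or as a convex combination of such functions. Then check the subgradient inequality $G^f(q)\ge G^f(q_t)+\inner{g_t}{q-q_t}$ along segments. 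Concretely, I would first try to show that $\psi(s)\coloneqq G^f(q_t+s(q-q_t))$ is convex on $[0,1]$ with right derivative at $0$ no smaller than $\inner{g_t}{q-q_t}$. The right-derivative bound should follow from $r_t\in\partial\varphi^{*}(q_t)$ together with the fact that $\varphi^{*}$ is differentiable along $\mathrm{ri}$ directions at gradient points of a strictly convex $\varphi$.

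\textbf{Mirror-descent step.} Set $\eta=1/L$. By \eqref{alg:IntroMGM}, $\log(x_{t+1})=\log(x_t)-\theta\eta\log(\theta\nabla f(x_t))-\log Z_t$ for a normalizer $Z_t$. Using $\log(\theta\nabla f(x_t))=\log(q_t)-\log(x_t)$, this yields
\begin{align}
r_{t+1}-r_t=-c_f\eta\bigl(\log(q_t)-\log(x_t)\bigr)-c_f\theta\log(Z_t)\mathbf{1}_m=-\eta g_t+\kappa_t\mathbf{1}_m
\end{align}
for some scalar $\kappa_t$. The mirror-descent objective $q\mapsto\eta\inner{g_t}{q}+\varphi^{*}(q)-\varphi^{*}(q_t)-\inner{r_t}{q-q_t}$ is convex. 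Since $r_{t+1}\in\partial\varphi^{*}(q_{t+1})$, the subgradient inequality at $q_{t+1}$ gives, for every $q\in\mathcal{C}$, a lower bound on the objective at $q$ by its value at $q_{t+1}$ plus $\inner{\eta g_t+r_{t+1}-r_t}{q-q_{t+1}}$. This inner product equals $\kappa_t\inner{\mathbf{1}_m}{q-q_{t+1}}=0$ on $\Delta_m$. Hence $q_{t+1}$ minimizes the objective over $\mathcal{C}$, which is exactly \eqref{alg:GeneralMD} with the stated subgradient choices.
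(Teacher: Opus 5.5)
Your arguments for $q_t\in\mathcal{C}$, for $r_t\in\partial\varphi^{*}(q_t)$, and for the mirror-descent step match the paper's. The gap is in $g_t\in\partial G^f(q_t)$.

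Your segment argument does work when $\theta=-1$. There $G^f=\varphi^{*}+c_f h$ with $h(q)=\inner{q}{\log(q)}$, and $G^f=(G^f-\mu\varphi^{*})+\mu\varphi^{*}$ is convex on $\mathcal{C}$ by Lemma~\ref{lemma:RelativeSmoothnessStrongConvexity}. The bound $\psi'_+(0)\geq\inner{g_t}{q-q_t}$ then follows from the one-sided inequality $\varphi^{*}(q_t+sd)-\varphi^{*}(q_t)\geq s\inner{r_t}{d}$ and the differentiability of $h$ at $q_t>0$.

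When $\theta=1$, however, $G^f=-\varphi^{*}+h$. You then need the \emph{reverse} bound $\varphi^{*\prime}(q_t;d)\leq\inner{r_t}{d}$ for $d=q-q_t$, i.e., that $\varphi^{*}$ has no kink at $q_t$. Your justification, strict convexity of $\varphi$, is false. Log-homogeneity gives $\varphi(y+s\mathbf{1}_m)=\varphi(y)+s$, so $\varphi$ is never strictly convex. The hypotheses also do not give strict convexity modulo $\mathbf{1}_m$ for free: for Kelly's problem with $P$ uniform on $\Set{(1,1,0),(0,0,1)}$, $\varphi$ is also affine along $(1,1,0)$. Without such a property, $\partial\varphi^{*}(q_t)=\Set{y:\nabla\varphi(y)=q_t}$ can be larger than $r_t+\mathbb{R}\mathbf{1}_m$. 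Then $\varphi^{*\prime}(q_t;d)$ can exceed $\inner{r_t}{d}$, and the segment argument yields nothing. You also never check that $q_t\in\mathrm{ri}(\mathcal{C})$, which any differentiability claim at $q_t$ would require.

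The paper never needs differentiability of $\varphi^{*}$. Lemma~\ref{lemma:RelativeSmoothnessStrongConvexitySubgradient} produces the explicit subgradient $g_t-\mu r_t\in\partial(G^f-\mu\varphi^{*})(q_t)$ from the variational formulas behind Lemma~\ref{lemma:RelativeSmoothnessStrongConvexity}. These are either an infimal projection $\inf_z\tilde{J}(\cdot,z)$ of a jointly convex $\tilde{J}$ with $\nabla_z\tilde{J}(q_t,\tilde{z}_t)=0$ at an explicit $\tilde{z}_t$, or a supremum of linear functions attained at an explicit $s_t$. The paper then adds $\mu D_{\varphi^{*}}(q,q_t)\geq 0$.

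If you want to keep your route, you can close the $\theta=1$ case as follows. First, $H_f\geq 0$ by Cauchy--Schwarz, so $\exp(f)$ is convex. This gives $f(x)\geq f(\mathbf{1}_m)+\log\inner{\nabla f(\mathbf{1}_m)}{x}$, hence $\varphi^{*}<\infty$ on $\Delta_m$, so $\mathcal{C}=\Delta_m$ and $q_t\in\mathrm{ri}(\mathcal{C})$. Next, convexity of $G^f-\mu\varphi^{*}=h-(1+\mu)\varphi^{*}$ along the line through $q_t$ in both directions $\pm d$ forces $\varphi^{*\prime}(q_t;d)+\varphi^{*\prime}(q_t;-d)\leq 0$, hence $\varphi^{*\prime}(q_t;d)=\inner{r_t}{d}$. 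That would be a shorter alternative to the paper's explicit subgradient computations, but as written the key step is unsupported.
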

The proof of Lemma~\ref{lemma:MGMEqConjugateMD} is deferred to Appendix~\ref{appendix:ProofMGMEqConjugateMD}.

Leveraging Lemma~\ref{lemma:RelativeSmoothnessStrongConvexity} and Lemma~\ref{lemma:MGMEqConjugateMD}, and following the analysis of \citet{Teboulle2018} with gradients replaced by subgradients,
we obtain the following convergence guarantee.
\begin{corollary}
    \label{corollary:ConjugateMDConv}
    Let $f$ be as in Lemma~\ref{lemma:RelativeSmoothnessStrongConvexity},
    and let $q_t$ be defined as in Lemma~\ref{lemma:MGMEqConjugateMD}.
    Then, we have
    \begin{align}
        G^f(q_{T+1})-G^f(q_{\star})
        \leq L\min\Set{\frac{1}{T},{\left(1-\frac{\mu}{L}\right)^T}}D_{\varphi^{*}}(q_{\star},q_1),
    \end{align}
    where  the subgradient $\nabla\varphi^{*}(q_1)$ in the Bregman divergence is defined as in Lemma~\ref{lemma:MGMEqConjugateMD}.
\end{corollary}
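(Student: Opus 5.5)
The plan is to replay the standard relative-smoothness analysis of mirror descent \citep{Teboulle2018,Lu2018b}, with gradients replaced by the specific subgradients $r_t\in\partial\varphi^*(q_t)$ and $g_t\in\partial G^f(q_t)$ from Lemma~\ref{lemma:MGMEqConjugateMD}. Throughout, write $D_t(q)\coloneqq\varphi^*(q)-\varphi^*(q_t)-\inner{r_t}{q-q_t}$ for the Bregman divergence built from the chosen subgradient; it is nonnegative because $r_t$ is a subgradient of the convex function $\varphi^*$.

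First I convert Lemma~\ref{lemma:RelativeSmoothnessStrongConvexity} into subgradient inequalities. Since $L\varphi^*-G^f$ and $G^f-\mu\varphi^*$ are convex on $\mathcal{C}$, I need the subgradients to split consistently, namely $Lr_t-g_t\in\partial(L\varphi^*-G^f)(q_t)$ and $g_t-\mu r_t\in\partial(G^f-\mu\varphi^*)(q_t)$. I would check this directly from \eqref{eq:Conjugate} and the explicit form $G^f=-\theta\varphi^*+c_f\inner{q}{\log q}$: both $L\varphi^*-G^f$ and $G^f-\mu\varphi^*$ are linear combinations of $\varphi^*$ and the entropy, and the entropy is differentiable at $q_t\in\mathrm{ri}(\Delta_m)$. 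This gives the two key inequalities
\begin{align}
G^f(q)\le G^f(q_t)+\inner{g_t}{q-q_t}+L D_t(q),\qquad G^f(q)\ge G^f(q_t)+\inner{g_t}{q-q_t}+\mu D_t(q)
\end{align}
for all $q\in\mathcal{C}$. I expect this step to be the main obstacle. The paper's remark that $\varphi^*$ may be nondifferentiable means that the subgradient sum rule, and the fact that the chosen $r_t$ is the right element of $\partial\varphi^*(q_t)$, have to be justified carefully. Where needed, I would use the first-order optimality characterization of $q_t$ as a maximizer in the conjugate.

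Second, I establish the three-point property. The mirror step $q_{t+1}\in\argmin_{q\in\mathcal{C}}\eta\inner{g_t}{q}+D_t(q)$ with $\eta=1/L$ has optimality condition $\eta g_t+r_{t+1}-r_t\in -N_{\mathcal{C}}(q_{t+1})$. Here I use that $r_{t+1}$ is the subgradient selected by Lemma~\ref{lemma:MGMEqConjugateMD}, which is how that lemma shows the iteration is mirror descent. Expanding the identity $D_t(q)-D_{t+1}(q)-D_t(q_{t+1})=\inner{r_{t+1}-r_t}{q-q_{t+1}}$ then gives, for all $q\in\mathcal{C}$,
\[
\eta\inner{g_t}{q_{t+1}-q}\le D_t(q)-D_{t+1}(q)-D_t(q_{t+1}).
\]

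Third, I combine and sum. Apply the smoothness inequality at $q=q_{t+1}$, then the three-point property, then the strong-convexity inequality at $q=q_\star$. This yields
\[
G^f(q_{t+1})-G^f(q_\star)\le (L-\mu)D_t(q_\star)-LD_{t+1}(q_\star).
\]
Taking $q=q_t$ in the same chain gives monotonicity, $G^f(q_{t+1})\le G^f(q_t)$.

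For the $(1-\mu/L)^T$ rate, I use $G^f(q_{t+1})-G^f(q_\star)\ge 0$ to get $D_{t+1}(q_\star)\le(1-\mu/L)D_t(q_\star)$. Applying the per-step inequality at $t=T$ then gives
\[
G^f(q_{T+1})-G^f(q_\star)\le (L-\mu)D_T(q_\star)\le L(1-\mu/L)^{T}D_1(q_\star).
\]
For the $1/T$ rate, I drop $\mu$ (since $\mu\ge0$) and telescope over $t=1,\dots,T$. Monotonicity then gives
\[
T\bigl(G^f(q_{T+1})-G^f(q_\star)\bigr)\le L D_1(q_\star).
\]
Taking the minimum of the two bounds completes the proof.
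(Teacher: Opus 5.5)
Your Steps 2 and 3 coincide with the paper's argument. The split you isolate in Step 1 is exactly what the paper proves separately as Lemma~\ref{lemma:RelativeSmoothnessStrongConvexitySubgradient}. The gap is in how you propose to prove that split.

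With $h(q)=\inner{q}{\log(q)}$ one has $Lr_t-g_t=(L+\theta)r_t-c_f\nabla h(q_t)$ and $g_t-\mu r_t=c_f\nabla h(q_t)-(\theta+\mu)r_t$. Outside the trivial cases ($\theta=-1$ with $L=1$ or $\mu=1$), one target function has the form $c\varphi^*-h$ and the other the form $h-c\varphi^*$, with $c>0$. Concretely: for $\theta=1$ they are $(L+1)\varphi^*-h$ and $h-(1+\mu)\varphi^*$; for $\theta=-1$, $L<1$, the smoothness one is $h-(1-L)\varphi^*$; for $\theta=-1$, $\mu>1$, the strong-convexity one is $h-(\mu-1)\varphi^*$.

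For $c\varphi^*-h$ your sum rule works. Apply it to $(c\varphi^*-h)+h=c\varphi^*+\iota_{\Delta_m}$; the qualification $\mathrm{ri}(\mathcal{C})\cap\mathbb{R}^m_{++}\neq\emptyset$ follows from $q_t\in\mathcal{C}\cap\mathbb{R}^m_{++}$ and convexity. For $h-c\varphi^*$ it does not work. The inequality you need is $c\,D_t(q)\le D_h(q,q_t)$, an \emph{upper} bound on the Bregman divergence built from $r_t$. Neither $r_t\in\partial\varphi^*(q_t)$ nor a sum rule supplies this. A sum rule applied to $(h-c\varphi^*)+c\varphi^*=h$ only shows that $\nabla h(q_t)-c\,r'$ is a subgradient for \emph{some} $r'\in\partial\varphi^*(q_t)$, not for the prescribed $r_t$. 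Appealing to optimality in the conjugate merely re-derives $r_t\in\partial\varphi^*(q_t)$.

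The missing idea is a reason why $r_t$ is the right subgradient. Here is a short fix.
\begin{enumerate}
\item In all three problematic cases $\mathcal{C}=\Delta_m$. For $\theta=-1$ this is Lemma~\ref{lemma:FullDomainForThetaMinusOne}. For $\theta=1$, log-homogeneity and Gibbs' inequality give $\varphi^*(q)\le h(q)-\inf_{\Delta_m}f$.
\item Hence $q_t\in\mathrm{ri}(\mathcal{C})$, so the convex function $h-c\varphi^*$ has some subgradient $s$ at $q_t$.
\item Add its subgradient inequality to $c$ times that of $r_t$. This gives $h(q)\ge h(q_t)+\inner{s+c\,r_t}{q-q_t}$ on $\Delta_m$.
\item Differentiability of $h$ at the relative-interior point $q_t$ forces $s=\nabla h(q_t)-c\,r_t$ modulo multiples of $\mathbf{1}_m$. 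Such multiples do not affect subgradients on $\Delta_m$, which is the claim.
\end{enumerate}

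The paper takes a different route and builds explicit certificates instead. It writes $h-c\varphi^*=\inf_z J(q,z)$ with $J$ jointly convex, and exhibits $z_t$ (e.g.\ $z_t=\exp((1-L)f(x_t))x_t^{1-L}$) with $\nabla_zJ(q_t,z_t)=0$, so $\nabla_qJ(q_t,z_t)$ is the required subgradient. It writes $c\varphi^*-h$ as a supremum of linear functions attained at an explicit $s_t$.

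A smaller instance of the same issue appears in your Step 2. The abstract optimality condition for the mirror step only involves \emph{some} element of $\partial\varphi^*(q_{t+1})$. Use instead the one-line computation from the MGM update showing that $r_{t+1}-r_t+\eta g_t$ is a multiple of $\mathbf{1}_m$. This yields the three-point relation with equality and with the prescribed $r_{t+1}$.
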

The proof of Corollary~\ref{corollary:ConjugateMDConv} is deferred to Appendix~\ref{appendix:ProofConjugateMDConv}.

Finally, to obtain convergence guarantees for the original objective function $f$, we
establish below a connection between optimization problems \eqref{def:GeneralProblem} and \eqref{def:ConjugateProblem}.
\begin{lemma}
    \label{lemma:EquivalenceOpt}
    Let $f$ be as in Lemma~\ref{lemma:RelativeSmoothnessStrongConvexity}.
    Then, all of the minimizers $x_{\star}$ of the optimization problem~\eqref{def:GeneralProblem} are also minimizers of the optimization problem~\eqref{def:ConjugateProblem}.
\end{lemma}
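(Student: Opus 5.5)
The plan is to identify the point $q_\star$ that corresponds to $x_\star$ under the map $x\mapsto\nabla\varphi(c_f\theta\log(x))$ of Lemma~\ref{lemma:MGMEqConjugateMD}. I expect it to equal $x_\star$ itself. I would then check the first-order optimality condition for the convex problem~\eqref{def:ConjugateProblem} at that point.

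\textbf{Step 1 (formula for $\nabla\varphi$).} By the chain rule, for $y=c_f\theta\log(x)$ we get
\[
\nabla\varphi(y)=c_f^2\theta\,\nabla f(x)\odot x=\theta\nabla f(x)\odot x ,
\]
since $c_f^2=1$. Differentiating $f(rx)=f(x)+\theta\log r$ at $r=1$ gives Euler's identity $\inner{\nabla f(x)}{x}=\theta$. Hence $\theta\nabla f(x)\odot x$ is entrywise positive (by Assumption~\ref{assumption:BasicAssumptions}) and sums to one, consistent with $q_t\in\mathcal{C}$.

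\textbf{Step 2 (optimality of $x_\star$ for $f$).} Assume first that $x_\star\in\mathrm{ri}(\Delta_m)$. The KKT conditions on the simplex give $\nabla f(x_\star)=\lambda\mathbf{1}_m$. Pairing with $x_\star$ and using Euler's identity yields $\lambda=\theta$, so $\theta\nabla f(x_\star)=\mathbf{1}_m$. By Step~1, $q_\star:=\nabla\varphi(c_f\theta\log(x_\star))=x_\star$. In particular $x_\star\in\mathcal{C}$, because $\nabla\varphi(y)\in\dom\partial\varphi^*$.

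\textbf{Step 3 (optimality for $G^f$).} By Lemma~\ref{lemma:RelativeSmoothnessStrongConvexity}, $G^f-\mu\varphi^*$ is convex on $\mathcal{C}$ with $\mu\ge0$, and $\varphi^*$ is convex, so $G^f$ is convex on $\mathcal{C}$. The subgradient computation of Lemma~\ref{lemma:MGMEqConjugateMD} applies to any point of the form $\nabla\varphi(c_f\theta\log(x))$ with $x\in\mathbb{R}^m_{++}$; the iteration index plays no role. It gives
\[
g=-c_f\log(x_\star)+c_f\log(q_\star)+c_f\mathbf{1}_m=c_f\mathbf{1}_m\in\partial G^f(x_\star).
\]
For any $q\in\mathcal{C}\subseteq\Delta_m$, the subgradient inequality then gives
\[
G^f(q)\ge G^f(x_\star)+c_f\inner{\mathbf{1}_m}{q-x_\star}=G^f(x_\star),
\]
so $x_\star$ minimizes $G^f$ over $\mathcal{C}$.

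\textbf{Main obstacle: boundary minimizers.} When $x_\star$ has zero entries (as can happen for Kelly's problem), $\log(x_\star)$ is undefined and the subgradient formula is unavailable. My first attempt would be to write $G^f$ explicitly at points $q=\nabla\varphi(y)$ with $y=c_f\theta\log(x)$. Using~\eqref{eq:Conjugate},
\[
G^f(q)=c_f\bigl[\theta f(x)+\inner{q}{\log(q)-\log(x)}\bigr].
\]

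The argument would then proceed as follows.
\begin{enumerate}
\item Take interior points $x_\epsilon\to x_\star$, and use the KKT conditions on the support of $x_\star$ to show $q_\epsilon\to x_\star$ and that the relative-entropy term vanishes in the limit.
\item Use continuity of $f$ on $\mathbb{R}^m_+\setminus\{0\}$ to pass to the limit in $f(x_\epsilon)$.
\item Use lower semicontinuity of the closed convex function $G^f$, together with the interior-case subgradient inequality applied at $x_\epsilon$ and approximate $c_f\mathbf{1}_m$-type subgradients, to conclude $G^f(q)\ge G^f(x_\star)$ for all $q\in\mathcal{C}$.
\end{enumerate}
If controlling the limit proves delicate, I would instead verify $0\in\partial G^f(x_\star)+N_{\Delta_m}(x_\star)$ directly through the conjugate $\varphi^*$. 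The fallback is to use Fenchel--Young, $\varphi^*(q)\ge\inner{y}{q}-\varphi(y)$ with equality at $q=\nabla\varphi(y)$, to produce a lower bound on $G^f(q)$ that is tight at $x_\star$. I expect this boundary case, especially matching the sign conventions for $(\theta,c_f)$ in each case, to be the main difficulty.
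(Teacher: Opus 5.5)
Your interior-case argument is correct, and it differs from the paper's route. You use $\nabla\varphi(c_f\theta\log(x))=\theta x\odot\nabla f(x)$, the identity $q_\star=x_\star$ for $x_\star\in\mathrm{ri}(\Delta_m)$, and the subgradient $c_f\mathbf{1}_m\in\partial G^f(x_\star)$ from Lemma~\ref{lemma:RelativeSmoothnessStrongConvexitySubgradient}, whose proof indeed never uses $x_t\in\Delta_m$.

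\textbf{The gap.} The statement covers all minimizers, and minimizers with zero entries occur for $\theta=-1$, $L\le 1$ (Kelly, D-optimal design) and for $\theta=1$. Your boundary plan fails at its key step.
\begin{enumerate}
\item Deriving $q_\epsilon\to x_\star$ from ``the KKT conditions on the support of $x_\star$'' is not available. Assumption~\ref{assumption:BasicAssumptions} gives differentiability only on $\mathbb{R}^m_{++}$ and continuity on $\mathbb{R}^m_+\setminus\Set{0}$. So $\nabla f(x_\star)$ is undefined, and nothing controls $\theta\nabla f(x_\epsilon)$ as $x_\epsilon\to x_\star$.
\item The ``approximate $c_f\mathbf{1}_m$ subgradients'' fail for the same reason. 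The vector $g_\epsilon=c_f(\log(\theta\nabla f(x_\epsilon))+\mathbf{1}_m)$ need not approach $c_f\mathbf{1}_m$ off the support, and you would need sign information there that only a boundary KKT condition could supply.
\item For $\theta=1$ one has $G^f=-\varphi^*+h$ with $h(q)=\inner{q}{\log(q)}$. Lemma~\ref{lemma:RelativeSmoothnessStrongConvexity} gives convexity but not closedness, so the lower semicontinuity you invoke is unproven.
\item Your Fenchel--Young fallback gives only one side in each case. For $\theta=1$ it bounds $G^f$ from above, not below. For $\theta=-1$, $L\le 1$ it gives the easy lower bound on $G^f=\varphi^*-h$, not the needed $G^f(x_\star)\le f(x_\star)$.
\end{enumerate}

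\textbf{The missing idea.} Write $G^f$ variationally, directly from the definition of $\varphi^*$ and log-homogeneity: for $q\in\Delta_m$,
\begin{align}
G^f(q)&=\sup_{x\in\mathrm{ri}(\Delta_m)}\bigl\{f(x)-\RenyiKL{q}{x}\bigr\}, &&\theta=-1,\ L\le 1,\\
G^f(q)&=\sup_{x\in\mathrm{ri}(\Delta_m)}\bigl\{\RenyiKL{q}{x}-f(x)\bigr\}, &&\theta=-1,\ \mu\ge 1,\ L>1,\\
G^f(q)&=\inf_{x\in\mathrm{ri}(\Delta_m)}\bigl\{\RenyiKL{q}{x}+f(x)\bigr\}, &&\theta=1.
\end{align}

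For $\theta=1$ this finishes at once. The third formula gives $G^f(q)\ge f(x_\star)$. Along $x_{\star,k}=(1-1/k)x_\star+\mathbf{1}_m/(mk)$ it also gives $G^f(x_\star)\le\RenyiKL{x_\star}{x_{\star,k}}+f(x_{\star,k})\to f(x_\star)$, with no differentiability needed at the boundary.

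For $\theta=-1$, $L\le 1$, the first formula gives $G^f(q)\ge f(q)\ge f(x_\star)$ (let $x\to q$). The upper bound requires $f(x)\le f(x_\star)+\RenyiKL{x_\star}{x}$ on $\mathrm{ri}(\Delta_m)$, which comes from the gradient inequality of the convex $\varphi$:
\begin{enumerate}
\item With $q_x=-x\odot\nabla f(x)$, one has $f(y)\le f(x)+\RenyiKL{q_x}{y}-\RenyiKL{q_x}{x}$.
\item Choosing $y=q_x$ gives $\RenyiKL{q_x}{x}\le f(x)-f(x_\star)$.
\item Along $x=x_{\star,k}$ this KL term therefore vanishes, so $q_{x_{\star,k}}\to x_\star$, and letting $k\to\infty$ in the first inequality gives the bound.
\end{enumerate}
This estimate is exactly the replacement for your boundary KKT step. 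With it, even your lower-semicontinuity scheme works in this case, because $\varphi^*-h$ is lower semicontinuous on $\Delta_m$.

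For $\theta=-1$, $\mu\ge1$, $L>1$, the paper shows $f=+\infty$ on the relative boundary of $\Delta_m$, so your interior argument suffices there. In every case the resulting identity $G^f(x_\star)=c_f\theta f(x_\star)$ also gives $\varphi^*(x_\star)<\infty$, i.e.\ $x_\star\in\mathcal{C}$.
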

The proof of Lemma~\ref{lemma:EquivalenceOpt} is deferred to Appendix~\ref{appendix:ProofEquivalenceOpt}.

Then, we use the identity~\eqref{eq:Conjugate} from the convex conjugacy to express $G^f(q_t)$ and $\varphi^{*}(q_t)$ in terms of the original objective function  $f$ and the iterate $x_t$, and then exploit the relative smoothness and relative strong convexity of $G^f$ with respect to $\varphi^{*}$ to obtain the following convergence guarantee.
\begin{theorem}
    \label{theorem:ConvergenceRateMGM}
    Let $f$ be as in Lemma~\ref{lemma:RelativeSmoothnessStrongConvexity},
    and let $x_t$ be the iterates generated by the MGM~\eqref{alg:IntroMGM} with $\eta=1/L$ and $x_1\in\mathrm{ri}(\Delta_m)$.
    Then, we have
    \begin{align}
        &f(x_{T+1})-f(x_{\star})\nonumber\\
        &\quad\leq\begin{cases}
            \min\Set{\frac{1}{T},{\left(1-\frac{\mu}{L}\right)^T}}L\left(K_1-O_1\right),&\quad{\text{if }}\theta=-1\text{ and }0\leq \mu\leq L\leq 1,\\
            \left(1-\frac{\mu}{L}\right)^T O_1,&\quad{\text{if }}\theta=-1,\text{ } 1\leq \mu\leq L,\text{ and }L>1,\\
            \min\Set{\frac{1}{T},{\left(1-\frac{\mu}{L}\right)^T}}L\left(K_1+O_1\right),&\quad{\text{if }}\theta=1,\text{ }0\leq \mu\leq L, \text{ and }L>0,
        \end{cases}
    \end{align}
    where $O_1= f(x_1)-f(x_{\star})$ and $K_1=\RenyiKL{x_{\star}}{x_1}$.
\end{theorem}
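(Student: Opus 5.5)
The plan is to transfer Corollary~\ref{corollary:ConjugateMDConv} back to the original variables. This needs three identities: one expressing $G^f(q_t)$ through $f(x_t)$, one identifying $q_\star$ with $x_\star$, and one expressing $D_{\varphi^*}(q_\star,q_1)$ through $O_1$ and $K_1$.

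\emph{Closed forms along the iterates.} From \eqref{def:varphi} we get $\nabla\varphi(y)=c_f^2\theta\, e^{c_f\theta y}\odot\nabla f(e^{c_f\theta y})$. Hence $q_t=\theta\, x_t\odot\nabla f(x_t)$. Differentiating $f(rx)=f(x)+\theta\log r$ at $r=1$ gives Euler's identity $\inner{x}{\nabla f(x)}=\theta$. Together with Assumption~\ref{assumption:BasicAssumptions}, this shows $q_t\in\mathrm{ri}(\Delta_m)$. Plugging \eqref{eq:Conjugate} into the definition of $G^f$ gives
\begin{align}
G^f(q_t)= -c_f\inner{q_t}{\log x_t}+\theta c_f f(x_t)+c_f\inner{q_t}{\log q_t}= c_f\RenyiKL{q_t}{x_t}+\theta c_f f(x_t).
\end{align}

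\emph{The optimum.} By Lemma~\ref{lemma:EquivalenceOpt}, $x_\star$ itself minimizes \eqref{def:ConjugateProblem}, so I take $q_\star=x_\star$. Evaluating $\varphi^*(x_\star)$ by the same conjugacy identity should give $G^f(q_\star)=\theta c_f f(x_\star)$. If $x_\star$ has zero entries, this must be justified by a limiting/continuity argument, using continuity of $f$ on $\mathbb{R}_+^m\setminus\{0\}$ and lower semicontinuity of $\varphi^*$. I would first try to extract this directly from the proof of Lemma~\ref{lemma:EquivalenceOpt}. The stationarity condition $x_\star\odot\theta\nabla f(x_\star)=x_\star$, i.e.\ the KKT condition on the simplex combined with Euler's identity, makes it plausible.

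\emph{The initial Bregman term.} With the subgradient $r_1=c_f\theta\log x_1$, I compute
\begin{align}
D_{\varphi^*}(x_\star,q_1)&=\bigl[c_f\theta\inner{x_\star}{\log x_\star}-c_f f(x_\star)\bigr]-\bigl[c_f\theta\inner{q_1}{\log x_1}-c_f f(x_1)\bigr]-c_f\theta\inner{\log x_1}{x_\star-q_1}\\
&=c_f(\theta K_1+O_1).
\end{align}
This equals $K_1+O_1$ when $\theta=c_f=1$, and $K_1-O_1$ when $\theta=c_f=-1$. These match the first and third cases of the theorem.

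\emph{Cases 1 and 3.} In both, $\theta c_f=1$, so
\begin{align}
G^f(q_{T+1})-G^f(q_\star)=c_f\RenyiKL{q_{T+1}}{x_{T+1}}+f(x_{T+1})-f(x_\star).
\end{align}
When $c_f=1$ the KL term is nonnegative and can be dropped, which finishes case~3. When $c_f=-1$ it enters with a negative sign. I expect this to be the main obstacle: I must show $f(x_{T+1})-f(x_\star)\le G^f(q_{T+1})-G^f(q_\star)$ despite that sign.

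To handle it, I would revisit the mirror-descent step from $q_T$ to $q_{T+1}$. Using the relative-smoothness descent inequality with $\eta=1/L$, I would bound $-\RenyiKL{q_{T+1}}{x_{T+1}}$ from below by something nonnegative. Alternatively, I would rerun the Teboulle-type argument directly on the quantities $\theta c_f f(x_t)$ rather than on $G^f$. The plan is to exploit that $x_{t+1}$ is the normalized multiplicative update, so the next mirror point is expressible in terms of $q_t$ and $x_t$, and to use Jensen/normalization to control the KL term.

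\emph{Case 2 ($\theta=-1$, $c_f=1$, $\mu\ge1$).} Here $\theta c_f=-1$, so the $G^f$ gap equals $\RenyiKL{q}{x}-(f(x_{T+1})-f(x_\star))$, which has the wrong orientation for a direct transfer. Instead I would use $\mu$-relative strong convexity with $\mu\ge1$ (Lemma~\ref{lemma:RelativeSmoothnessStrongConvexity}) to run a one-step contraction directly on $O_t=f(x_t)-f(x_\star)$. The aim is to show $O_{t+1}\le(1-\mu/L)\,O_t$ by combining the three-point inequality with the identities above, where the KL terms are absorbed by the strong-convexity term. Iterating then yields $(1-\mu/L)^T O_1$, consistent with the absence of both the factor $L$ and $K_1$ in that case.
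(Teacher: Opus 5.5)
Your preliminary identities are correct: $q_t=\theta x_t\odot\nabla f(x_t)$, $G^f(q_t)=c_f\RenyiKL{q_t}{x_t}+\theta c_f f(x_t)$, $G^f(x_\star)=\theta c_f f(x_\star)$ (shown in the proof of Lemma~\ref{lemma:EquivalenceOpt}), and $D_{\varphi^*}(x_\star,q_1)=c_f(\theta K_1+O_1)$. Your treatment of $\theta=1$ is exactly the paper's. The gap is that both $\theta=-1$ cases are left unresolved, and there the Corollary cannot simply be transferred.

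In case~1, the inequality you set as your target, $f(x_{T+1})-f(x_\star)\le G^f(q_{T+1})-G^f(x_\star)$, fails whenever $q_{T+1}\neq x_{T+1}$, because the gap equals $O_{T+1}-\RenyiKL{q_{T+1}}{x_{T+1}}$. For the same reason, $-\RenyiKL{q_{T+1}}{x_{T+1}}\le 0$ admits no nonnegative lower bound. Write $O_t=f(x_t)-f(x_\star)$, $K_t=\RenyiKL{x_\star}{x_t}$, $D_t=D_{\varphi^*}(x_\star,q_t)=K_t-O_t\ge 0$ and $N_t=\norm{x_t\odot(-\nabla f(x_t))^{1/L}}_1$. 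The paper instead proves the stronger recursion
\begin{align}
\tfrac{1}{L}O_{t+1}+D_{t+1}\le\left(1-\tfrac{\mu}{L}\right)D_t
\end{align}
from three ingredients missing in your sketch:
\begin{enumerate}[label=(\roman*)]
\item the smoothness subgradient inequality evaluated at $q=x_{t+1}$, not at $q_{t+1}$. This is admissible since $\mathcal{C}=\Delta_m$ for $L<1$ by Lemma~\ref{lemma:FullDomainForThetaMinusOne}, and $x_{t+1}=q_t$ for $L=1$;
\item the bound $G^f(q)=\sup_{x\in\mathrm{ri}(\Delta_m)}\{f(x)-\RenyiKL{q}{x}\}\ge f(q)$, which turns (i) into $L\log N_t\le(1-L)(O_t-O_{t+1})$. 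Jensen with $\inner{x_t}{-\nabla f(x_t)}=1$ gives $N_t\ge 1$, hence the monotonicity of $f(x_t)$ needed for the $1/T$ rate;
\item the strong-convexity inequality at $q=x_\star$, rewritten via $\log q_t=(1-L)\log x_t+L\log x_{t+1}+L\log N_t$ as $(1-\mu)O_t+LK_{t+1}+(1-L)K_t-L\log N_t\le(1-\mu)K_t$.
\end{enumerate}
Combining (ii) with (iii) gives the recursion, and both rates follow.

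Case~2 likewise lacks a mechanism. ``Absorbing the KL terms by strong convexity'' cannot be it when $\mu=1$, since then the strong-convexity inequality reduces to $\RenyiKL{q}{q_t}\ge 0$ and yields no contraction. The paper applies relative smoothness at $q_{t+1}$, together with $G^f(q_{t+1})\ge G^f(x_\star)=-f(x_\star)$, to get $0\le (L-1)O_t-LO_{t+1}+L\log N_t$. Since $L>1$, H\"{o}lder gives $\log N_t\le 0$, which settles $\mu=1$. For $\mu>1$, it uses strong convexity on all of $\Delta_m$ (again Lemma~\ref{lemma:FullDomainForThetaMinusOne}), minimizes $\RenyiKL{q}{x_t}+\RenyiKL{q}{q_t}/(\mu-1)$ over $q$ in closed form, and applies the power-mean inequality to obtain $(\mu-1)O_t\le -L\log N_t$. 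Combining yields $LO_{t+1}\le (L-\mu)O_t$. In both $\theta=-1$ cases, controlling the normalization constant $N_t$ is the missing link between the mirror-descent inequalities and $f$.
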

The proof of Theorem~\ref{theorem:ConvergenceRateMGM} is deferred to Appendix~\ref{appendix:ProofConvergenceRateMGM}.

\section{Applications}
\label{sec:Applications}
In Appendix~\ref{appendix:Applications}, 
we show that the objective functions of the market equilibrium problem~\eqref{def:Fisher}, Kelly's portfolio problem~\eqref{def:Kelly}, and the D-optimal design problem satisfy Assumption~\ref{assumption:BasicAssumptions} and the second-order condition~\eqref{ineq:IntroHessianBounds}.
By setting $\eta=1/L$ in the MGM~\eqref{alg:IntroMGM} and applying the convergence guarantee established in Theorem~\ref{theorem:ConvergenceRateMGM}, we recover known algorithms and their optimization error guarantees, as summarized in Table~\ref{tab:MGMComparison}.
Here, $L$ is the parameter in the second-order condition~\eqref{ineq:IntroHessianBounds}.

In Section~\ref{subsec:LewisWeights} and Section~\ref{subsec:PetzRenyiCapacity}, we apply our framework to the problems of computing $\ell_p$-Lewis weights~\eqref{def:Lewis} and R\'{e}nyi capacities in both classical and classical-quantum communication.
We recover known algorithms and their optimization error guarantees for computing  $\ell_p$-Lewis weights  with $p<2$ and R\'{e}nyi capacities in classical communication.
Furthermore, we show that the MGM yields state-of-the-art performance guarantees for computing $\ell_p$-Lewis weights with $p>2$ and  R\'{e}nyi capacities in classical-quantum communication, i.e., Petz-R\'{e}nyi capacities.
Both of these results are novel to the best of our knowledge.



\subsection{Application to Lewis Weight}
\label{subsec:LewisWeights}


Consider the problem of computing the $\ell_p$-Lewis weight~\eqref{def:Lewis}. 
Without loss of generality, we assume that the matrix $A$ has full column rank and has no zero rows.
If this assumption does not hold, we may remove zero rows and replace $A$ by a full-column-rank matrix with the same column space.

\begin{lemma}[{[\citealp{Cohen2015b}, Fact~2.1(2)]}]
    \label{lemma:LewisSolutionUpperBound}
    All entries of the optimal solution $x_{\text{Lew},\star}$ to the optimization problem~\eqref{def:Lewis} are no larger than $1/n$.
\end{lemma}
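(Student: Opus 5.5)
The plan is to read the bound off the first-order optimality conditions of~\eqref{def:Lewis}, combined with the elementary fact that leverage scores never exceed one.

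\textbf{Gradient.} Write $c\coloneqq p/(n(2-p))$ and $\tau_j(x)\coloneqq\inner{M(x)^{-1}A[j]}{A[j]}$. For $x$ with $M(x)$ invertible and $x[j]>0$, differentiating $\log\det M(x)$ gives
\begin{align}
    \partial_j f_{\text{Lew}}(x)=c\left(1-\tfrac{2}{p}\right)x[j]^{-2/p}\tau_j(x)=-\tfrac{1}{n}\,x[j]^{-2/p}\tau_j(x),
\end{align}
because $c(1-2/p)=c(p-2)/p=-1/n$. Consequently
\begin{align}
    \sum_j x[j]\,\partial_j f_{\text{Lew}}(x)=-\tfrac1n\mathrm{Tr}\left(M(x)^{-1}\textstyle\sum_j x[j]^{1-2/p}A[j]A[j]^{\top}\right)=-\tfrac1n\mathrm{Tr}(I_n)=-1,
\end{align}
which is consistent with $(-1)$-log-homogeneity.

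\textbf{Optimality conditions.} Let $x_\star$ be a minimizer and $S\coloneqq\Set{j: x_\star[j]>0}$. Coordinates outside $S$ satisfy the claim trivially. Since $f_{\text{Lew}}(x_\star)$ is finite, $M(x_\star)$ is invertible, so $f_{\text{Lew}}$ is differentiable in the coordinates indexed by $S$ near $x_\star$. Moving mass between two coordinates in $S$ is feasible in both directions, so the partial derivatives $\partial_j f_{\text{Lew}}(x_\star)$ share a common value $\lambda$ for all $j\in S$. Summing $x_\star[j]\partial_j f_{\text{Lew}}(x_\star)$ over $j\in S$ and using the identity above (the terms with $j\notin S$ vanish, or are absent when $p<2$, since then $S$ must be everything because $x[j]^{1-2/p}$ blows up) gives $\lambda=-1$. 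Hence
\begin{align}
    x_\star[j]^{-2/p}\tau_j(x_\star)=n\qquad\text{for all } j\in S.
\end{align}

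\textbf{Leverage-score bound.} Multiply by $x_\star[j]$ to get $n\,x_\star[j]=w_j\inner{M^{-1}A[j]}{A[j]}$, where $w_j\coloneqq x_\star[j]^{1-2/p}$ and $M=\sum_i w_iA[i]A[i]^{\top}\succeq w_jA[j]A[j]^{\top}$. The right-hand side is the leverage score of the $j$th row of the reweighted matrix, so it is at most $1$. To see this directly, note that $M^{-1/2}w_jA[j]A[j]^{\top}M^{-1/2}\preceq I_n$ is rank one with trace $w_j\tau_j$. Therefore $x_\star[j]\le 1/n$.

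\textbf{Main obstacle.} The only delicate step is justifying the stationarity condition when $p>2$ and $x_\star$ has zero entries, i.e., restricting the KKT argument to the support $S$. The plan handles this by noting that $M(x_\star)\succ 0$ and that perturbations supported on $S$ stay in $\Delta_m$, so one-sided directional derivatives give the equal-partials condition on $S$ without needing any information about the coordinates outside $S$.
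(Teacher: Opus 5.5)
Your proof is correct and follows the argument behind the paper's citation: stationarity on the support gives the fixed-point identity $x_\star[j]^{-2/p}\tau_j(x_\star)=n$, which the paper separately quotes from \citet{Lee2020} as Lemma~\ref{lemma:FixPtProp}, so $n\,x_\star[j]$ is a leverage score of the reweighted matrix, and the fact cited from \citet{Cohen2015b} amounts to such leverage scores being at most $1$. The paper gives no proof of its own, so your derivation makes this self-contained, and your support argument for possible zero entries when $p>2$ is a careful addition.
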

We assume that $n<m$; otherwise, Lemma~\ref{lemma:LewisSolutionUpperBound} directly implies that $x_{\text{Lew},\star}=\mathbf{1}_n/n$.

To establish the second-order characterization~\eqref{ineq:IntroHessianBounds}, we use  the following known Hessian bound. 
Indeed, as discussed in Section~\ref{sec:Intro}, it motivates our second-order characterization.
\begin{lemma}[{[\citealp{Lee2020}, Lemma~23]}]    
    \label{lemma:KnownHessianBoundsLewis}
    For any $x\in\mathbb{R}_{++}^m$, the objective function ${f}_{\text{Lew}}$ of the optimization problem~\eqref{def:Lewis} satisfies
    \begin{align}
        \min\Set{\frac{2}{p},1}\mathrm{Diag}(-\nabla {f}_{\text{Lew}}(x)\oslash x)
        \leq \nabla^2 {f}_{\text{Lew}}(x)
        \leq \max\Set{\frac{2}{p},1}\mathrm{Diag}(-\nabla {f}_{\text{Lew}}(x)\oslash x).
    \end{align}
\end{lemma}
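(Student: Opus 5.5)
The plan is a direct computation: write the Hessian of $f_{\text{Lew}}$ as a congruence of a matrix built from leverage scores, and then sandwich that matrix using two standard facts about Schur (entrywise) products of a projection matrix.

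\emph{Gradient.} Set $\beta\coloneqq 1-2/p$ and $c\coloneqq p/(n(2-p))$, so that $f_{\text{Lew}}(x)=c\log\det M(x)$ with $M(x)=\sum_j x[j]^{\beta}A[j]A[j]^{\top}$. Define the leverage scores
\begin{align}
\sigma[j]\coloneqq x[j]^{\beta}\inner{M(x)^{-1}A[j]}{A[j]}.
\end{align}
Using $\partial_j\log\det M=\mathrm{Tr}(M^{-1}\partial_j M)$, we get $\partial_j f_{\text{Lew}}(x)=c\beta\,\sigma[j]/x[j]$. A direct check gives $c\beta=-1/n$. Hence $\nabla f_{\text{Lew}}(x)=-\tfrac1n\sigma\oslash x$, and so
\begin{align}
\mathrm{Diag}(-\nabla f_{\text{Lew}}(x)\oslash x)=\tfrac1n D_x^{-1}\mathrm{Diag}(\sigma)D_x^{-1},
\end{align}
where $D_x=\mathrm{Diag}(x)$.

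\emph{Hessian.} Differentiate once more, using $\partial_i M^{-1}=-M^{-1}(\partial_i M)M^{-1}$. The diagonal contribution from differentiating $x[j]^{\beta-1}$ is $c\beta(\beta-1)\,\sigma[j]/x[j]^2=\tfrac{2}{pn}\,\sigma[j]/x[j]^2$. The cross term is $-c\beta^2x[i]^{\beta-1}x[j]^{\beta-1}\inner{M^{-1}A[i]}{A[j]}^2$, which equals $\tfrac{\beta}{n}x[i]^{\beta-1}x[j]^{\beta-1}\inner{M^{-1}A[i]}{A[j]}^2$.

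Now introduce the projection matrix $P\coloneqq W^{1/2}AM^{-1}A^{\top}W^{1/2}$ with $W=\mathrm{Diag}(x^{\beta})$; its diagonal is $\sigma$. With this notation,
\begin{align}
\nabla^2 f_{\text{Lew}}(x)=\tfrac1n D_x^{-1}\Bigl(\tfrac2p\mathrm{Diag}(\sigma)+\beta\,P\circ P\Bigr)D_x^{-1},
\end{align}
where $\circ$ denotes the entrywise product.

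\emph{Sandwiching $P\circ P$.} Since congruence by $D_x^{-1}$ preserves the Loewner order, it suffices to show
\begin{align}
0\leq P\circ P\leq \mathrm{Diag}(\sigma).
\end{align}
The lower bound is the Schur product theorem, because $P\succeq0$. For the upper bound, $P^2=P$ and $P$ is symmetric, so $\sum_j P_{ij}^2=P_{ii}=\sigma[i]$. Thus the row sums of the nonnegative symmetric matrix $P\circ P$ equal $\sigma$. It follows that $\mathrm{Diag}(\sigma)-P\circ P$ is a weighted graph Laplacian (equivalently, symmetric and diagonally dominant with nonnegative diagonal), hence positive semidefinite.

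\emph{Case split on the sign of $\beta$.} If $p>2$, then $\beta>0$ and the middle matrix lies between $\tfrac2p\mathrm{Diag}(\sigma)$ and $(\tfrac2p+\beta)\mathrm{Diag}(\sigma)=\mathrm{Diag}(\sigma)$. This gives the constants $\min\Set{2/p,1}=2/p$ and $\max\Set{2/p,1}=1$. If $p<2$, then $\beta<0$ and the order flips: the middle matrix lies between $(\tfrac2p+\beta)\mathrm{Diag}(\sigma)=\mathrm{Diag}(\sigma)$ and $\tfrac2p\mathrm{Diag}(\sigma)$, which gives the constants $1$ and $2/p$. In both cases this is exactly the claimed inequality.

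The only delicate step is the upper bound $P\circ P\leq\mathrm{Diag}(\sigma)$. It rests on the projection identity $\sum_jP_{ij}^2=P_{ii}$, which requires the full-column-rank assumption so that $M(x)$ is invertible on $\mathbb{R}^m_{++}$. Everything else is bookkeeping of the constants $c\beta=-1/n$ and $c\beta(\beta-1)=2/(pn)$.
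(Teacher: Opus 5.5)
Your proof is correct: the constants $c\beta=-1/n$ and $c\beta(\beta-1)=2/(pn)$, the identity $\nabla^2 f_{\text{Lew}}(x)=\tfrac1n D_x^{-1}\bigl(\tfrac2p\mathrm{Diag}(\sigma)+\beta\,P\circ P\bigr)D_x^{-1}$, and the sandwich $0\leq P\circ P\leq\mathrm{Diag}(\sigma)$ (Schur product theorem, plus the row-sum/Laplacian argument from $P^2=P$) all check out, and the sign split on $\beta$ gives exactly the stated constants. The paper gives no proof of its own and simply cites \citet{Lee2020}; the cited lemma rests on the same ingredients, namely the Hessian written via leverage scores and $P\circ P$, together with the bound $0\leq P\circ P\leq\mathrm{Diag}(\sigma)$.
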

By using the log-homogeneity of the function ${f}_{\text{Lew}}(x)$, we can add rank-one matrices to the Hessian bounds in Lemma~\ref{lemma:KnownHessianBoundsLewis} so that they align with the second-order characterization~\eqref{ineq:IntroHessianBounds}.
Then, we obtain the following lemma.
\begin{lemma}
    \label{lemma:HessianBoundsLewis}
    The objective function ${f}_{\text{Lew}}$  satisfies Assumption~\ref{assumption:BasicAssumptions} and the second-order characterization~\eqref{ineq:IntroHessianBounds} with $(\theta,\mu,L)=(-1,\min\Set{2/p,1},\max\Set{2/p,1})$.
\end{lemma}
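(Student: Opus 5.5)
The plan is to check the two requirements separately. Assumption~\ref{assumption:BasicAssumptions} follows by direct computation. The second-order characterization~\eqref{ineq:IntroHessianBounds} follows from Lemma~\ref{lemma:KnownHessianBoundsLewis} after eliminating the rank-one terms with Euler-type identities from log-homogeneity. Subtracting $\nabla f\nabla f^\top$ from both sides of Lemma~\ref{lemma:KnownHessianBoundsLewis} is not enough on its own. It gives the upper bound when $L\ge 1$, but the lower bound would need $(\mu-1)\nabla f\nabla f^\top\ge 0$, which fails for $p>2$. So I will instead use the homogeneity to reduce both quadratic forms to a hyperplane on which the rank-one terms vanish.

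\emph{Step 1 (Assumption~\ref{assumption:BasicAssumptions}).} For $r>0$ we have $M(rx)=r^{1-2/p}M(x)$, so $\log\det M(rx)=\log\det M(x)+n(1-2/p)\log r$. Multiplying by $p/(n(2-p))$ gives $f_{\text{Lew}}(rx)=f_{\text{Lew}}(x)-\log r$, so $\theta=-1$. Differentiating,
\[
\partial_j f_{\text{Lew}}(x)=-\tfrac1n x[j]^{-2/p}\inner{M(x)^{-1}A[j]}{A[j]}.
\]
This is strictly negative on $\mathbb{R}^m_{++}$, because $M(x)\succ 0$ (as $A$ has full column rank) and $A[j]\neq 0$. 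Continuity on $\mathbb{R}^m_+\setminus\{0\}$, as an extended-valued function, follows from continuity of $\log\det$ on the PSD cone with value $+\infty$ or $-\infty$ at singular points. I would check the sign of the prefactor $p/(n(2-p))$ so that the extended value is $+\infty$ rather than $-\infty$, and treat the case $p<2$, where $x[j]^{1-2/p}$ blows up, as a limit.

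\emph{Step 2 (Euler identities).} Write $g=\nabla f_{\text{Lew}}(x)$ and $D=\mathrm{Diag}(-g\oslash x)$. Differentiating $f(rx)=f(x)-\log r$ in $r$ at $r=1$ gives $\inner{g}{x}=-1$. From $\nabla f(rx)=\nabla f(x)/r$ we get $\nabla^2 f(x)x=-g$. Also $Dx=-g$. Hence
\[
(\nabla^2 f-gg^\top)x=-g+g=0,\qquad H_f(x)x=(D-gg^\top)x=-g+g=0.
\]
So both symmetric matrices annihilate $x$.

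\emph{Step 3 (reduction to a hyperplane).} Since $\inner{g}{x}=-1\ne0$, any $v$ decomposes as $v=w+sx$ with $\inner{g}{w}=0$. By Step 2 and symmetry, $v^\top(\nabla^2f-gg^\top)v=w^\top\nabla^2 f\,w$ and $v^\top H_f(x)v=w^\top Dw$. Lemma~\ref{lemma:KnownHessianBoundsLewis} then gives $\mu\, w^\top Dw\le w^\top\nabla^2 f\,w\le L\, w^\top Dw$ with $\mu=\min\{2/p,1\}$ and $L=\max\{2/p,1\}$, which is exactly~\eqref{ineq:IntroHessianBounds}.

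Finally, the side condition for $\theta=-1$ holds. For $p>2$ we have $L=1\le1$. For $p<2$ we have $\mu=1$ and $L=2/p>1$.

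The main obstacle is Step 3: recognizing that the naive subtraction fails for the lower bound and finding the kernel vector $x$. The remaining risk is only the boundary and continuity bookkeeping in Step 1.
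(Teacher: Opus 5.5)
Your proof is correct and is essentially the paper's. The paper checks Assumption~\ref{assumption:BasicAssumptions} the same way and then removes the rank-one terms via the congruence $Q(x)^{\top}(\cdot)\,Q(x)$ with $Q(x)=I_m+x\nabla f(x)^{\top}$ (Lemma~\ref{lemma:RemoveRankOne}); that $Q(x)$ is exactly your projection $v=w+sx\mapsto w$ onto $\{w : \inner{\nabla f(x)}{w}=0\}$ along $x$. One harmless slip in your motivating remark: naive subtraction gives the upper bound when $L\le 1$, not $L\ge 1$, which is why it fails for $p<2$.
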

The proof of Lemma~\ref{lemma:HessianBoundsLewis} is deferred to Appendix~\ref{appendix:ProofHessianBoundsLewis}.

Combining Lemma~\ref{lemma:LewisSolutionUpperBound}
with Lemma~\ref{lemma:HessianBoundsLewis} and Theorem~\ref{theorem:ConvergenceRateMGM}, we obtain the following convergence result for $p\in(0,2)\cup(2,\infty)$:
\begin{corollary}
    \label{cor:LewisMGMOptErrorKLConv}
    Let $x_t$ be the iterates generated by the MGM~\eqref{alg:IntroMGM} for solving the optimization problem~\eqref{def:Lewis} with  $\eta=1/\max\Set{2/p,1}$, $\theta=-1$, and $x_1=\mathbf{1}_m/m$.
    Then, for all $p\in(0,2)\cup(2,\infty)$, we have
    \begin{align}
        f_{\text{Lew}}(x_{T+1})-f_{\text{Lew}}(x_{\text{Lew},\star})
        \leq \left(1-\min\Set{\frac{2}{p},\frac{p}{2}}\right)^T\log\left(\frac{m}{n}\right).\nonumber
    \end{align}
\end{corollary}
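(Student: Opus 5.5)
The plan is to specialize Theorem~\ref{theorem:ConvergenceRateMGM} using the parameters from Lemma~\ref{lemma:HessianBoundsLewis}, namely $(\theta,\mu,L)=(-1,\min\Set{2/p,1},\max\Set{2/p,1})$. I will split into the two regimes $p<2$ and $p>2$, since they land in different cases of the theorem. Then I will bound the resulting constants $O_1$ and $K_1$ by $\log(m/n)$, using Lemma~\ref{lemma:LewisSolutionUpperBound} and the uniform initialization $x_1=\mathbf{1}_m/m$. In both regimes the step size $\eta=1/L=1/\max\Set{2/p,1}$ is the one the theorem requires.

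\emph{Case $p\in(0,2)$.} Here $\mu=1$ and $L=2/p>1$, so the second case of Theorem~\ref{theorem:ConvergenceRateMGM} applies and gives
\[
f_{\text{Lew}}(x_{T+1})-f_{\text{Lew}}(x_{\text{Lew},\star})\le (1-p/2)^T O_1 .
\]
It remains to show $O_1\le\log(m/n)$, which I will do in two steps.
\begin{itemize}
\item By $(-1)$-log-homogeneity, $f_{\text{Lew}}(\mathbf{1}_m/m)=f_{\text{Lew}}(\mathbf{1}_m)+\log m$.
\item Assumption~\ref{assumption:BasicAssumptions} gives $-\nabla f_{\text{Lew}}>0$ on $\mathbb{R}^m_{++}$, so $f_{\text{Lew}}$ is nonincreasing in each coordinate there. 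Extending this to the boundary by continuity on $\mathbb{R}^m_+\setminus\Set{0}$ (the step I expect to need the most care, since $x_{\text{Lew},\star}$ may have zero entries), Lemma~\ref{lemma:LewisSolutionUpperBound} gives $x_{\text{Lew},\star}\le\mathbf{1}_m/n$ entrywise. Hence
\[
f_{\text{Lew}}(x_{\text{Lew},\star})\ge f_{\text{Lew}}(\mathbf{1}_m/n)=f_{\text{Lew}}(\mathbf{1}_m)+\log n .
\]
\end{itemize}
Subtracting yields $O_1\le \log m-\log n$.

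\emph{Case $p>2$.} Here $L=1$ and $\mu=2/p\le 1$, so the first case of the theorem applies. Bounding the minimum by its second term gives
\[
f_{\text{Lew}}(x_{T+1})-f_{\text{Lew}}(x_{\text{Lew},\star})\le (1-2/p)^T (K_1-O_1).
\]
Since $x_{\text{Lew},\star}$ is a minimizer, $O_1\ge 0$, and so it suffices to bound $K_1$. Writing $x_\star=x_{\text{Lew},\star}$, I compute
\[
K_1=\RenyiKL{x_\star}{\mathbf{1}_m/m}=\log m+\sum_j x_\star[j]\log x_\star[j].
\]
Each $x_\star[j]\le 1/n$ by Lemma~\ref{lemma:LewisSolutionUpperBound}, so each term satisfies $x_\star[j]\log x_\star[j]\le -x_\star[j]\log n$, with the convention $0\log 0=0$. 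Because the entries of $x_\star$ sum to one, $\sum_j x_\star[j]\log x_\star[j]\le -\log n$. Hence $K_1-O_1\le K_1\le\log(m/n)$.

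Combining the two cases, and noting $\min\Set{2/p,p/2}$ equals $p/2$ for $p<2$ and $2/p$ for $p>2$, gives the stated bound. Beyond the boundary monotonicity step in the first case, everything is direct substitution into Theorem~\ref{theorem:ConvergenceRateMGM}.
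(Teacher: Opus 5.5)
Your proof is correct and is essentially the specialization the paper intends. For $p<2$ you use the second case of Theorem~\ref{theorem:ConvergenceRateMGM} with the constants from Lemma~\ref{lemma:HessianBoundsLewis}, bounding $O_1\le\log(m/n)$ via log-homogeneity, coordinatewise monotonicity and Lemma~\ref{lemma:LewisSolutionUpperBound}, as the paper does for $f_{\text{Fis}}$; for $p>2$ you use the first case with $K_1-O_1\le K_1\le\log(m/n)$ from the entropy bound. The boundary step you flag is harmless: for $p<2$, $f_{\text{Lew}}\to+\infty$ at the boundary, so $x_{\text{Lew},\star}\in\mathrm{ri}(\Delta_m)$, as also shown in the proof of Lemma~\ref{lemma:EquivalenceOpt}.
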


As for the case where $p>2$, we characterize below the iteration complexity for the MGM to obtain an $\varepsilon$-estimate of the $\ell_p$-Lewis weight.
Here, we say a vector $x$ is an $\varepsilon$-estimate of the $\ell_p$-Lewis weight if the multiplicative error $\norm{x\oslash x_{\text{Lew},\star}-\mathbf{1}_m}_{\infty}$ is at most $\varepsilon$.

\begin{corollary}
    \label{cor:MGMMultiplicativeEstimateConv}
    Let $x_t$ be the iterates generated by the MGM~\eqref{alg:IntroMGM} with $\eta,\theta$ and $x_1$ chosen as in Corollary~\ref{cor:LewisMGMOptErrorKLConv}.
    Then, for $p>2$, after $T$ iterations, where
    \begin{align}
        T
        \geq\frac{p}{2}\log\left(16\frac{np^2}{\varepsilon^2}\log\left(\frac{m}{n}\right)\right)+1,\nonumber
    \end{align}
    the vector $x_{T+1} \odot (-\nabla {f}_{\text{Lew}}(x_{T+1}))^{p/2}$ is an $\varepsilon$-estimate of the $\ell_p$-Lewis weight for all $\varepsilon\in(0,1]$.
\end{corollary}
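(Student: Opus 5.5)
The plan is to reduce the statement to Corollary~\ref{cor:LewisMGMOptErrorKLConv} and then convert a small optimization error into a small multiplicative error. For $p>2$, Lemma~\ref{lemma:HessianBoundsLewis} gives $(\theta,\mu,L)=(-1,2/p,1)$, so $\eta=1$ and $c_f=-1$. A direct computation gives $-\nabla f_{\text{Lew}}(x)[j]=\frac1n x[j]^{-2/p}\tau_j(x)$, where $\tau_j(x)\coloneqq\inner{M(x)^{-1}A[j]}{A[j]}$. Hence
\begin{align}
w(x)\coloneqq x\odot(-\nabla f_{\text{Lew}}(x))^{p/2}=\bigl(\tau(x)/n\bigr)^{p/2},
\end{align}
which is the Cohen--Peng map. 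The Lewis weight is its fixed point, $w(x_{\text{Lew},\star})=x_{\text{Lew},\star}$, which is the first-order optimality condition for \eqref{def:Lewis} on $\Delta_m$ combined with $\sum_j x[j]^{1-2/p}\tau_j(x)=n$. Using $(1-2/p)^{p/2}\le e^{-1}$ and the stated lower bound on $T$, Corollary~\ref{cor:LewisMGMOptErrorKLConv} gives
\begin{align}
f_{\text{Lew}}(x_T)-f_{\text{Lew}}(x_{\text{Lew},\star})\le\delta\coloneqq\frac{\varepsilon^2}{16np^2}.
\end{align}
The extra $+1$ in the bound on $T$ should pay for using iterate $x_T$ rather than $x_{T+1}$ in the next step.

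Next I would extract a one-step progress inequality from the mirror-descent interpretation (Lemma~\ref{lemma:MGMEqConjugateMD}). With $L=1$ relative smoothness, the standard three-point argument gives
\begin{align}
G^f(q_T)-G^f(q_{T+1})\ge D_{\varphi^*}(q_{T+1},q_T),
\end{align}
and through \eqref{eq:Conjugate} this translates back into $f(x_T)-f(x_{T+1})\le\delta$. The divergence between consecutive iterates is then small. Since $x_{T+1}\propto x_T\odot(-\nabla f_{\text{Lew}}(x_T))$ and the normalizer is $\inner{x_T}{-\nabla f_{\text{Lew}}(x_T)}=1$ by log-homogeneity (Euler's identity), I expect the divergence to take the form $\RenyiKL{x_{T+1}}{x_T}$, or a Bregman divergence weighted by $-\nabla f_{\text{Lew}}$, bounded by $\delta$. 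This yields a bound on $\sum_j x_T[j]\,(u_j-1)^2$ with $u\coloneqq -\nabla f_{\text{Lew}}(x_T)$, i.e.\ on $\sum_j x_T[j]\,(\tau_j x_T[j]^{-2/p}/n-1)^2$.

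The main obstacle is upgrading this averaged (weighted $\ell_2$) information into an entrywise $\ell_\infty$ statement about $w(x_{T+1})\oslash x_{\text{Lew},\star}$. First I would turn the weighted bound into an entrywise one. The weights $x_T[j]$ can be tiny, so I would use the leverage-score structure: $x[j]^{1-2/p}\tau_j(x)\le 1$ for each $j$ (rows of the weighted matrix have leverage at most one), so $u_j x_T[j]\le 1/n$. I would then combine this with Lemma~\ref{lemma:LewisSolutionUpperBound} to obtain $|u_j-1|\lesssim\sqrt{n\delta}$ in each coordinate, i.e.\ $x_{T+1}/x_T\in[1-\gamma,1+\gamma]$ entrywise with $\gamma\approx 2\sqrt{n\delta}$.

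Second, I would use monotonicity of leverage scores under entrywise rescaling. If $y\in[(1-\gamma)x,(1+\gamma)x]$ entrywise, then $M(y)$ lies between $(1\mp\gamma)^{1-2/p}M(x)$, so $\tau(y)$ and hence $w(y)$ change by a factor $(1\pm\gamma)^{O(p)}$. This shows that $x_{T+1}$ is an approximate fixed point of $w$ with multiplicative error about $p\gamma$. Finally, I would compare the approximate fixed point with the exact one, either via the Thompson-metric behaviour of $w$ (the contraction of \citet{Cohen2015b} for $p<4$ does not cover all $p$, so instead I would use that $\tau$ is $1$-Lipschitz in the log-ratio sense composed with the exponent $p/2$) or via strong convexity of $f_{\text{Lew}}$ in log coordinates near the optimum. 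Tracking constants, a factor of order $p\sqrt{n\delta}=\varepsilon/4$ is what the choice $16np^2/\varepsilon^2$ suggests. I expect this last comparison step, and obtaining the correct power of $p$ in it, to require the most care.
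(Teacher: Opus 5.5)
\textbf{There is a genuine gap.} It appears one step before the comparison you flagged as hardest.

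Your route needs entrywise control $|u_j-1|\le C\sqrt{n\delta}$ for every $j$, i.e.\ $x_{T+1}\oslash x_T\approx\mathbf{1}_m$. This is false in general at the stated $T$, not merely unproven. The bound you actually have, $\RenyiKL{x_{T+1}}{x_T}\le\delta$, is valid for $L=1$, but it weights coordinate $j$ by $x_T[j]$. The leverage bound $x_T[j]u_j\le 1/n$ and Lemma~\ref{lemma:LewisSolutionUpperBound} only give upper bounds on entries. They never give the lower bound on $x_T[j]$ you would need.

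Concretely, scale a single row $A[j]$ by $s\to 0$. Then $x_{\text{Lew},\star}[j]\asymp s^p$, and $x_{t+1}[j]=\frac{1}{n}x_t[j]^{1-2/p}\tau_j(x_t)$ with $\tau_j(x_t)\asymp s^2$. Starting from $x_1[j]=1/m$, this gives $\log\bigl(x_T[j]/x_{\text{Lew},\star}[j]\bigr)=(1-2/p)^{T-1}p\log(1/s)+O(1)\to\infty$, and hence $u_j(x_T)\to 0$. Meanwhile the optimization-error bound of Corollary~\ref{cor:LewisMGMOptErrorKLConv} is uniform in $s$.

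Your last step is also not available as stated. The map $x\mapsto(\tau(x)/n)^{p/2}$ is only $(p/2-1)$-Lipschitz in the Thompson metric, which is not a contraction for $p\ge 4$. Strong convexity in log coordinates would again give only $x_{\text{Lew},\star}$-weighted (KL-type) control, with the same failure on tiny coordinates.

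\textbf{The missing idea is to never control individual coordinates of $x$.} You already observed $w(x)=(\tau(x)/n)^{p/2}$. Together with $x_{\text{Lew},\star}[j]^{2/p}=\tau_j(x_{\text{Lew},\star})/n$ (Lemma~\ref{lemma:FixPtProp}), this gives $w(x)[j]/x_{\text{Lew},\star}[j]=\bigl(\tau_j(x)/\tau_j(x_{\text{Lew},\star})\bigr)^{p/2}$. So it suffices to sandwich $M(x_{T+1})$ between multiples of $M(x_{\text{Lew},\star})$ in the Loewner order.

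The paper gets this directly from the optimization error at $x_{T+1}$:
\begin{enumerate}
\item With $Y=M(x_{\text{Lew},\star})^{-1/2}M(x)M(x_{\text{Lew},\star})^{-1/2}$, one has $n(f_{\text{Lew}}(x)-f_{\text{Lew}}(x_{\text{Lew},\star}))=-\log\det(Y^{p/(p-2)})$.
\item Apply Jensen's trace inequality (Lemma~\ref{lemma:JensenTraceIneq}) to the convex map $t\mapsto t^{p/(p-2)}$, with $C_j=(x_{\text{Lew},\star}[j]E_j)^{1/2}$, $\sum_j C_j^2=I_n$ and $\mathrm{Tr}(E_j)=n$. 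This gives $\mathrm{Tr}(Y^{p/(p-2)})\le n$, so $n$ times the optimization error dominates $D_H(Y^{p/(p-2)},I_n)$ with $H=-\log\det$ (Lemma~\ref{lemma:OptError2LogDetDiv}).
\item Since $n(f_{\text{Lew}}(x_{T+1})-f_{\text{Lew}}(x_{\text{Lew},\star}))\le\varepsilon^2/(16p^2)$, Lemma~\ref{lemma:LogDetDiv2Lowner} gives $(1-\varepsilon/(2p))I_n\le Y^{p/(p-2)}\le(1+\varepsilon/(2p))I_n$.
\item Raising to the power $-(1-2/p)$ and then $p/2$ produces the factors $(1\pm\varepsilon/(2p))^{\mp(p/2-1)}\in[1-\varepsilon,1+\varepsilon]$.
\end{enumerate}
This argument is insensitive to arbitrarily small Lewis weights. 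It needs neither a one-step progress inequality nor the $+1$ in $T$.
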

The proof of Corollary~\ref{cor:MGMMultiplicativeEstimateConv} is deferred to Appendix~\ref{appendix:ProofMGMMultiplicativeEstimateConv}.
Indeed, \citet{Gribling2026} have shown that an $\varepsilon$-estimate
of the $\ell_p$-Lewis weight can be constructed from a vector $x$
provided that the matrix $M(x)$ is sufficiently close to
$M(x_{\text{Lew},\star})$.
We further show that such closeness can be guaranteed when the optimization error is sufficiently small, thereby allowing us to translate the optimization error bound in Corollary~\ref{cor:LewisMGMOptErrorKLConv} into an iteration complexity bound for obtaining an $\varepsilon$-estimate of the $\ell_p$-Lewis weight using the MGM.





\subsubsection{Comparison with Existing Results}
\label{subsubsec:LewisComparison}

For computing an $\varepsilon$-estimate of the $\ell_p$-Lewis weight~\eqref{def:Lewis} for $p>2$, the algorithm of \citet{Lee2016phdthesis} can also be equivalently written as the MGM~\eqref{alg:IntroMGM} with step size $\eta = 1$. 
However, they only established an iteration complexity of $\widetilde{O}(1/\varepsilon)$, which is higher than the complexities $\widetilde{O}(p^{3}\log(1/\varepsilon))$ and $\widetilde{O}(p^{2}\log(1/\varepsilon))$ achieved by the algorithms proposed by \citet{Fazel2022} and \citet{Gribling2026}, respectively.
Our framework recovers the MGM with step size $\eta = 1$ chosen based on the second-order characterization~\eqref{ineq:IntroHessianBounds} and
yields an improved complexity guarantee of $\widetilde{O}(p\log(1/\varepsilon))$, which is state-of-the-art among all previous algorithms.

As discussed in Section~\ref{sec:Intro}, the algorithm of \citet{Cohen2015b} can be viewed as the MGM~\eqref{alg:IntroMGM} with step size $\eta=p/2$.
They proved that the iterates converge to the optimal point at a rate of $\tilde{O}(\abs{1-p/2}^T)$ for $p<4$ in terms of the Thompson metric $\norm{\log(x)-\log(x_{\text{Lew},\star})}_{\infty}$.
As shown in Appendix~\ref{appendix:ComparisonErrorMeasures}, this metric upper bounds the optimization error, and thus their result also implies the same convergence rate in terms of the optimization error bound.
Note that while our convergence rate $\tilde{O}((1-\min\Set{2/p,p/2})^T)$  established in Corollary~\ref{cor:LewisMGMOptErrorKLConv} dominates their result in terms of the optimization error bound, we do not recover their convergence guarantee in terms of the Thompson metric.

\begin{remark}
    \label{remark:ConcurrentWorksLewisWeights}
    After completing this work, we became aware of the following two concurrent works:
    \citet{Padmanabhan2026} established the same complexity of $\widetilde{O}(p\log(1/\varepsilon))$ for computing the $\ell_p$-Lewis weight using the MGM~\eqref{alg:IntroMGM} with step size $\eta = 1$ for $p>2$; the algorithm of \citet{Song2026Lewis} achieves a lower complexity of $\widetilde{O}(p\log(p)+\sqrt{p}\log(1/\varepsilon))$ for $p\geq 4$.
    We note that their analyses are specific to the structure of the $\ell_p$-Lewis weight problem.
\end{remark}

\subsection{Application to Petz-R\'{e}nyi Capacity}
\label{subsec:PetzRenyiCapacity}
The Petz-R\'{e}nyi capacity extends the R\'{e}nyi capacity to the quantum setting and characterizes the optimal error exponent in communication over classical-quantum channels \citep{Dalai2017,Cheng2019,Renes2025,Li2025b,Cheng2026}.
Here, the error exponent stands for the rate at which the decoding error probability decays exponentially with the length of the code over the channel.
A classical-quantum channel is a mapping $\mathcal{W}$ from
the set of \emph{classical symbols} $\Set{1,2,\dots,m}$ to
the set $\mathcal{D}_n$ of $n\times n$ \emph{density matrices}, which are Hermitian positive semidefinite matrices with unit trace.
In the mathematical formalism of quantum mechanics, a density matrix corresponds to a quantum state. 
The  Petz-R\'{e}nyi capacity $C_{\mathcal{W},\alpha}$ of order $\alpha\in(0,1)$ is given by
\begin{align}
    \label{def:PetzRenyiCapacity}
    C_{\mathcal{W},\alpha}
    \coloneqq\frac{1}{\alpha-1}\min_{x\in\Delta_m} f_{\text{Petz}}(x),
    \quad f_{\text{Petz}}(x)
    \coloneqq\alpha\log \left(\mathrm{Tr}\left(W(x)^{1/\alpha}\right)\right),
\end{align}
where $W(x)\coloneqq\sum_{j=1}^m x[j] \mathcal{W}(j)^{\alpha}$.

The lemma below is our main technical contribution in this section.
\begin{lemma}
    \label{lemma:HessianBoundsPetz}
    The objective function $f_{\text{Petz}}$ in the optimization problem~\eqref{def:PetzRenyiCapacity} satisfies Assumption~\ref{assumption:BasicAssumptions} and the second-order characterization~\eqref{ineq:IntroHessianBounds} with $(\theta,\mu,L)=(1,0,1/\alpha-1)$ for all $\alpha\in(0,1)$.
\end{lemma}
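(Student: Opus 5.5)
The plan is to verify Assumption~\ref{assumption:BasicAssumptions} directly, then compute the gradient and Hessian of $f_{\text{Petz}}$ through Fr\'echet derivatives of matrix powers. The two sides of~\eqref{ineq:IntroHessianBounds} then reduce to two trace inequalities, and I expect only the upper one to be hard. Throughout, write $\beta\coloneqq 1/\alpha>1$, $W_j\coloneqq\mathcal{W}(j)^{\alpha}\in\mathcal{H}_{n,+}\setminus\Set{0}$, $h(t)\coloneqq t^{\beta-1}$ and $T(x)\coloneqq\mathrm{Tr}(W(x)^{\beta})$.

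\emph{Assumption~\ref{assumption:BasicAssumptions}.} Since $W(rx)=rW(x)$, we get $f_{\text{Petz}}(rx)=\alpha\log(r^{\beta}T(x))=f_{\text{Petz}}(x)+\log r$, so $\theta=1$. Continuity on $\mathbb{R}_+^m\setminus\Set{0}$ holds because $W(x)\neq0$ there and $X\mapsto\mathrm{Tr}(X^{\beta})$ is continuous on $\mathcal{H}_{n,+}$. Using $D\,\mathrm{Tr}(X^{\beta})[H]=\beta\,\mathrm{Tr}(X^{\beta-1}H)$, the gradient is
\begin{align}
g_j\coloneqq\partial_j f_{\text{Petz}}(x)=\frac{\mathrm{Tr}(W(x)^{\beta-1}W_j)}{T(x)} .
\end{align}
For $x>0$, the support of $W(x)$ contains the support of every $W_j$, so each $g_j>0$.

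\emph{Hessian and lower bound.} Differentiating $g$ again gives
\begin{align}
\nabla^2 f_{\text{Petz}}(x)+gg^{\top}=\frac{1}{T}\Big[\mathrm{Tr}\big(W_j\,Dh(W)[W_k]\big)\Big]_{j,k}-(\beta-1)gg^{\top}.
\end{align}
For the lower bound ($\mu=0$), note that $\nabla^2 f+\theta\nabla f\nabla f^{\top}\geq 0$ is equivalent to convexity of $e^{f}=\norm{W(x)}_{\beta}$, the Schatten-$\beta$ norm. This is a norm composed with a linear map, hence convex, and no computation is needed.

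\emph{Upper bound.} Take $L=\beta-1$ and $H_f=\mathrm{Diag}(g\oslash x)-gg^{\top}$. The rank-one terms $-(\beta-1)gg^{\top}$ appear on both sides and cancel exactly. Setting $A_j\coloneqq x_jW_j$ (so $\sum_j A_j=W$) and $c_j\coloneqq v_j/x_j$, the claim becomes
\begin{align}
\sum_{j,k}c_jc_k\,M_{jk}\leq\sum_j r_j c_j^2,\qquad M_{jk}\coloneqq\mathrm{Tr}\big(A_j\,Dh(W)[A_k]\big),\quad r_j\coloneqq(\beta-1)\mathrm{Tr}(W^{\beta-1}A_j).
\end{align}
The key observation is Euler's identity $Dh(W)[W]=(\beta-1)W^{\beta-1}$. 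It gives $\sum_k M_{jk}=r_j$, so the row sums of $M$ are exactly $r$. Hence
\begin{align}
c^{\top}(\mathrm{Diag}(r)-M)c=\tfrac12\sum_{j,k}M_{jk}(c_j-c_k)^2,
\end{align}
and the bound follows once $M_{jk}\geq0$ for all $j,k$.

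\emph{Main obstacle: $M_{jk}\geq 0$.} In the eigenbasis of $W$, $M_{jk}=\mathrm{Tr}\big(A_j(\Gamma\circ A_k)\big)$, where $\Gamma$ is the Loewner matrix of divided differences of $h$. When $\beta-1\in(0,1]$, i.e.\ $\alpha\geq1/2$, $h$ is operator monotone, so $\Gamma$ is positive semidefinite. Then $\Gamma\circ A_k\geq0$ by Schur's product theorem, and $M_{jk}\geq0$ follows. For $\alpha<1/2$ this argument fails, and I would try two routes. The first is the integral representation of $t^{\beta-1}$ as $t^{\lfloor\beta-1\rfloor}$ times an operator-monotone power, reducing to the previous case by a product rule. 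The second is to bypass entrywise nonnegativity and prove $\mathrm{Diag}(r)-M\geq0$ directly. For that I would bound $|V_{ab}|^2$ by Cauchy--Schwarz with $|(A_j)_{ab}|\leq\sqrt{(A_j)_{aa}(A_j)_{bb}}$, then use the explicit entries $\Gamma_{ab}=(\lambda_a^{\beta-1}-\lambda_b^{\beta-1})/(\lambda_a-\lambda_b)$. This is the step I expect to carry the real difficulty.
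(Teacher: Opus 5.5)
Your lower bound (convexity of the Schatten-$\beta$ norm) is exactly the paper's argument. The rank-one cancellation, Euler's identity $\sum_k M_{jk}=r_j$, and the Laplacian identity are all correct. So for $\alpha\in[1/2,1)$ your Loewner--Schur argument does prove the upper bound, and it is a more elementary route than the paper's in that range. You do need $W(x)\succ0$ there, e.g.\ by assuming $\sum_j\mathcal{W}(j)\succ0$ as the paper does, since $t^{\beta-1}$ is not $C^1$ at $0$ when $\beta<2$.

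The gap is $\alpha\in(0,1/2)$, and the problem is worse than an unfinished step: the condition $M_{jk}\geq0$ you reduce to is false there. Your first fallback aims at exactly this entrywise nonnegativity, so it cannot succeed. Here is a counterexample.
\begin{itemize}
\item Take $\alpha=1/3$, so $h(t)=t^2$ and $Dh(W)[B]=WB+BW$.
\item Let $n=2$, $m=3$, $e_1=(1,0)^{\top}$, $u=(1,-5)^{\top}$, and $x=(1,2,26)$.
\item Let $\mathcal{W}(1)=e_1e_1^{\top}$, $\mathcal{W}(2)=\tfrac12\mathbf{1}_2\mathbf{1}_2^{\top}$, and $\mathcal{W}(3)=\tfrac1{26}uu^{\top}$.
\end{itemize}
These states are rank-one projectors, so $A_1=e_1e_1^{\top}$, $A_2=\mathbf{1}_2\mathbf{1}_2^{\top}$, $A_3=uu^{\top}$, and
\[
W=\begin{pmatrix}3&-4\\-4&26\end{pmatrix}\succ0,\qquad M_{12}=\mathrm{Tr}\big(A_1(WA_2+A_2W)\big)=2(W_{11}+W_{12})=-2<0 .
\]
The quadratic form $c^{\top}(\mathrm{Diag}(r)-M)c$ itself stays nonnegative here, as it must; only your sufficient condition fails.

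The failure is structural. For $\beta>2$, $t^{\beta-1}$ is not operator monotone, so $\Gamma$ need not be positive semidefinite. The expansion of $M_{12}$ then contains cross terms such as $2\,\mathrm{Re}\,\mathrm{Tr}(A_1A_2A_3)$ (here equal to $-8$) that have no sign. Factoring $t^{\beta-1}$ into operator-monotone pieces and using the product rule just reproduces such terms.

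Your second route, as sketched, does not close either. Work in the eigenbasis of $W$ and set $V=\sum_jc_jA_j$ and $S_a=\sum_jc_j^2(A_j)_{aa}$. The target is then
\[
\sum_{a,b}\Gamma_{ab}\lvert V_{ab}\rvert^2\leq\sum_a\Gamma_{aa}\lambda_aS_a .
\]
The entrywise Cauchy--Schwarz bound gives only $\lvert V_{aa}\rvert^2\leq\lambda_aS_a$ on the diagonal. That alone uses up the whole right-hand side and leaves nothing for the nonnegative off-diagonal terms. A working argument must use the cancellation $\sum_j(A_j)_{ab}=0$ for $a\neq b$, which is genuinely non-commutative information.

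The missing ingredient is the Carlen--Lieb concavity theorem (Lemma~\ref{lemma:TiltedTraceJointConcave}); the paper never computes the Hessian. By the zeroth-order characterization (Lemma~\ref{lemma:ZeroOrderCharacterization}), the upper bound with $L=1/\alpha-1$ is equivalent to concavity of
\[
U^{f_{\text{Petz}}}_{1,1/\alpha-1}(y)=\alpha\,\mathrm{Tr}\Big(W(y^{\alpha})^{1/\alpha}\Big)=\alpha\,\mathrm{Tr}\Big(\Big(\sum_{j=1}^m\big(y[j]\mathcal{W}(j)\big)^{\alpha}\Big)^{1/\alpha}\Big).
\]
This is the jointly concave map $(X_1,\dots,X_m)\mapsto\mathrm{Tr}((\sum_jX_j^{\alpha})^{1/\alpha})$ composed with the linear map $y\mapsto(y[1]\mathcal{W}(1),\dots,y[m]\mathcal{W}(m))$, and it settles every $\alpha\in(0,1)$ at once. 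Your computation is a valid self-contained proof for $\alpha\geq1/2$. For $\alpha<1/2$ you need a trace inequality of Carlen--Lieb strength, not a sign argument on the entries of $M$.
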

The proof of Lemma~\ref{lemma:HessianBoundsPetz} is deferred to Appendix~\ref{appendix:ProofHessianBoundsPetz}.

Combining Lemma~\ref{lemma:HessianBoundsPetz} and  Theorem~\ref{theorem:ConvergenceRateMGM}, we obtain the following corollary.

\begin{corollary}
    \label{cor:PetzMGMOptErrorKLConv}
    Let $x_t$ be the iterates generated by the MGM~\eqref{alg:IntroMGM} for solving the optimization problem~\eqref{def:PetzRenyiCapacity}, with $\eta=\alpha/(1-\alpha)$, $\theta=1$, and $x_1=\mathbf{1}_m/m$.
    Then, for all $\alpha\in(0,1)$, we have
    \begin{align}
        0
        \leq C_{\mathcal{W},\alpha}-\frac{1}{\alpha-1}f_{\text{Petz}}(x_{T+1})
        \leq \frac{(2-\alpha)\log(m)}{\alpha T}.
    \end{align}
\end{corollary}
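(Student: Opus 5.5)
We plan to apply Theorem~\ref{theorem:ConvergenceRateMGM} in its third case ($\theta=1$, $\mu=0$, $L=1/\alpha-1$), which Lemma~\ref{lemma:HessianBoundsPetz} makes available; note that $\eta=1/L=\alpha/(1-\alpha)$ matches the stated step size. With $\mu=0$ the minimum in the bound is $1/T$, so the theorem gives
\begin{align}
    f_{\text{Petz}}(x_{T+1})-f_{\text{Petz}}(x_{\star})
    \leq \frac{1-\alpha}{\alpha T}\left(K_1+O_1\right),
\end{align}
where $K_1=\RenyiKL{x_\star}{\mathbf{1}_m/m}$ and $O_1=f_{\text{Petz}}(\mathbf{1}_m/m)-f_{\text{Petz}}(x_\star)$. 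Dividing by $1-\alpha>0$ and using $C_{\mathcal{W},\alpha}=f_{\text{Petz}}(x_\star)/(\alpha-1)$ turns this into
\begin{align}
    C_{\mathcal{W},\alpha}-\frac{1}{\alpha-1}f_{\text{Petz}}(x_{T+1})
    =\frac{f_{\text{Petz}}(x_{T+1})-f_{\text{Petz}}(x_\star)}{1-\alpha}
    \leq \frac{K_1+O_1}{\alpha T}.
\end{align}
The lower bound $0$ is immediate, since $x_{T+1}\in\Delta_m$ and $x_\star$ minimizes $f_{\text{Petz}}$ over $\Delta_m$.

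It remains to bound $K_1+O_1\leq(2-\alpha)\log m$. The first term is standard: $K_1=\log m-H(x_\star)\leq\log m$.

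For $O_1$ we aim to show $O_1\leq(1-\alpha)\log m$, which uses only the structure of $f_{\text{Petz}}$. By $1$-log-homogeneity, $f_{\text{Petz}}(\mathbf{1}_m/m)=f_{\text{Petz}}(\mathbf{1}_m)-\log m$, so it suffices to show $f_{\text{Petz}}(\mathbf{1}_m)\leq f_{\text{Petz}}(x)+\alpha\log m$ for every $x\in\Delta_m$.

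Set $A_j=\mathcal{W}(j)^\alpha\geq 0$ and note $\mathrm{Tr}\bigl(\mathcal{W}(j)^{\alpha\cdot(1/\alpha)}\bigr)=1$. Two bounds are then needed.
\begin{itemize}
    \item Lower bound: $\mathrm{Tr}\bigl(W(x)^{1/\alpha}\bigr)\geq 1$ for $x\in\Delta_m$. This holds because $t\mapsto t^{1/\alpha}$ is operator convex-enough for trace functions ($\mathrm{Tr}\,g$ is convex for convex $g$), giving $\mathrm{Tr}\bigl(W(x)^{1/\alpha}\bigr)\leq\sum_j x[j]$, which points the wrong way. Instead we use superadditivity, $\mathrm{Tr}\bigl((\sum_j B_j)^{p}\bigr)\geq\sum_j\mathrm{Tr}(B_j^p)$ for $p\geq 1$ and $B_j\geq 0$, together with homogeneity. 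Hence $f_{\text{Petz}}(x)\geq\alpha\log\sum_j x[j]^{1/\alpha}\geq\alpha\log\bigl(m^{1-1/\alpha}\bigr)=(\alpha-1)\log m$, where the last step is the power-mean inequality.
    \item Upper bound: $f_{\text{Petz}}(\mathbf{1}_m)\leq 0$? Convexity of $\mathrm{Tr}\bigl((\cdot)^{1/\alpha}\bigr)$ applied to $\sum_j A_j=m\cdot\frac1m\sum_j A_j$ gives $\mathrm{Tr}\bigl((\sum_j A_j)^{1/\alpha}\bigr)\leq m^{1/\alpha}\cdot\frac1m\sum_j 1=m^{1/\alpha-1}$. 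Thus $f_{\text{Petz}}(\mathbf{1}_m)\leq(1-\alpha)\log m$.
\end{itemize}

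Combining the two, $f_{\text{Petz}}(\mathbf{1}_m/m)\leq(1-\alpha)\log m-\log m=-\alpha\log m$, and $f_{\text{Petz}}(x_\star)\geq(\alpha-1)\log m$. This yields only $O_1\leq(1-2\alpha)\log m$, which is at least as good as needed whenever it is nonnegative. In either case $O_1\leq(1-\alpha)\log m$ holds, since $O_1\leq\max\{(1-2\alpha)\log m,\,0\}$ trivially bounds it and both are at most $(1-\alpha)\log m$.

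Therefore $K_1+O_1\leq(2-\alpha)\log m$, which is the claimed bound. The main obstacle is the quantum trace inequalities (convexity and superadditivity of $\mathrm{Tr}(\cdot)^{1/\alpha}$ on positive semidefinite matrices). Both are standard: the first follows from convexity of $\mathrm{Tr}\,g$ for convex $g$, and the second from the McCarthy/Rotfel'd-type inequality for $p\geq1$.
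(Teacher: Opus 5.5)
There is a genuine gap, in your upper-bound bullet. Your structure otherwise matches the paper's: apply Theorem~\ref{theorem:ConvergenceRateMGM} with $(\theta,\mu,L)=(1,0,1/\alpha-1)$ via Lemma~\ref{lemma:HessianBoundsPetz}, use $K_1\leq\log m$, and show $O_1\leq(1-\alpha)\log m$. The paper obtains the last bound by citing Lemma~\ref{lemma:PetzCapacityBounds}: on $\Delta_m$, $f_{\text{Petz}}\leq 0$ and $f_{\text{Petz}}\geq(\alpha-1)\log m$. You instead try to derive both facts. Your lower bound, via McCarthy superadditivity and the power-mean inequality giving $\mathrm{Tr}(W(x)^{1/\alpha})\geq m^{1-1/\alpha}$, is correct. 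The bullet's heading ``$\mathrm{Tr}(W(x)^{1/\alpha})\geq 1$'' is false, though; the reverse inequality holds on $\Delta_m$.

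The upper-bound bullet has an arithmetic error that invalidates the argument. You write $m^{1/\alpha}\cdot\frac{1}{m}\sum_{j=1}^m 1=m^{1/\alpha-1}$, but the left side equals $m^{1/\alpha}$. As a result, your claims $f_{\text{Petz}}(\mathbf{1}_m)\leq(1-\alpha)\log m$, $f_{\text{Petz}}(\mathbf{1}_m/m)\leq-\alpha\log m$, and $O_1\leq(1-2\alpha)\log m$ are false in general. Take every $\mathcal{W}(j)$ equal to one full-rank state. Then $f_{\text{Petz}}(x)=\log\sum_j x[j]$, so $f_{\text{Petz}}\equiv 0$ on $\Delta_m$ and $O_1=0>(1-2\alpha)\log m$ whenever $\alpha>1/2$. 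You could have caught this yourself: $O_1\geq 0$ always because $x_\star$ is a minimizer, so a bound of $(1-2\alpha)\log m<0$ is impossible. Your ``$\max\{\cdot,0\}$'' patch still rests on the false inequality.

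The fix is the observation you dismissed as ``pointing the wrong way.'' Convexity of $X\mapsto\mathrm{Tr}(X^{1/\alpha})$ gives $\mathrm{Tr}(W(x)^{1/\alpha})\leq\sum_j x[j]\,\mathrm{Tr}(\mathcal{W}(j))=1$ on $\Delta_m$, that is, $f_{\text{Petz}}(x_1)\leq 0$. Your second bullet, with the arithmetic corrected, gives the same bound $f_{\text{Petz}}(\mathbf{1}_m/m)\leq 0$. Combined with your correct lower bound $f_{\text{Petz}}(x_\star)\geq(\alpha-1)\log m$, this yields $O_1\leq(1-\alpha)\log m$ directly. With that correction your proof coincides with the paper's, and supplies a self-contained proof of Lemma~\ref{lemma:PetzCapacityBounds} in place of the citation.
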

The proof of Corollary~\ref{cor:PetzMGMOptErrorKLConv} is deferred to Appendix~\ref{appendix:ProofPetzMGMOptErrorKLConv}.

\subsubsection{Comparison with Existing Methods}
\label{subsubsec:PetzComparison}
The R\'{e}nyi capacities can be seen as special cases of the Petz-R\'{e}nyi capacities~\eqref{def:PetzRenyiCapacity} when all the involved matrices share the same eigenbasis.
In this special case, as discussed in Section~\ref{sec:RelatedWork}, the algorithm of \citet{Arimoto1976} can be viewed as an instance of the MGM~\eqref{alg:IntroMGM} with $\eta=\alpha/(1-\alpha)$ for computing R\'{e}nyi capacities.
He established a convergence rate of $O(\log(m)/(\alpha T))$.
As shown in Corollary~\ref{cor:PetzMGMOptErrorKLConv}, our framework recovers his algorithm and its convergence rate.
Furthermore, as discussed in Section~\ref{sec:RelatedWork}, the analysis of \citet{Arimoto1976} does not admit a direct extension to the general case where the matrices may not share the same eigenbasis, i.e., the non-commutative case.
In contrast, our framework directly applies to the general non-commutative case.

The algorithms of \citet{Lai2026} and \citet{Chu2026} for computing the Petz-R\'{e}nyi capacity $C_{\mathcal{W},\alpha}$ achieve convergence rates of $O(C\log(m)T^{-1})$ and $O(C\log(m)^{0.5/\alpha}T^{1-1.5/\alpha})$, for $\alpha\in(0,0.5]$ and $\alpha\in[0.5,1)$, respectively.
These rates scale exponentially with the Petz-R\'{e}nyi capacity $C_{\mathcal{W},\alpha}$, i.e., the factor $C$ is given by $\exp((1/\alpha-1)C_{\mathcal{W},\alpha})$.
In contrast, our method achieves a convergence rate of $O((2-\alpha)\log(m)/(\alpha T))$ in Corollary~\ref{cor:PetzMGMOptErrorKLConv} for all $\alpha\in(0,1)$, which does not involve such a factor.

The algorithm of \citet{Chu2025} achieves a convergence rate similar to that of our method for all $\alpha\in(0.5,1)$.
However, the per-iteration time complexity of their algorithm is higher, as it requires solving a subproblem in each iteration that does not have a closed-form solution.
In contrast, each iteration of our MGM admits a closed-form update.




\section{Discussion}
\label{sec:Discussions}

Interestingly, the quantum extension of Kelly's portfolio~\eqref{def:Kelly} corresponds exactly to maximum-likelihood quantum state tomography, estimating an unknown quantum state given measurement outcomes by the maximum-likelihood principle \citep{Hradil2004,Scholten2018}.
It is hence natural to consider extending our framework to the quantum setting.
The associated optimization problem is directly obtained by replacing all vectors in Kelly's portfolio problem with Hermitian matrices.
However, entry-wise multiplication and division between vectors appear in the MGM~\eqref{alg:IntroMGM},  the second-order characterization~\eqref{ineq:IntroHessianBounds}, and the construction of the mirror descent iterates in Lemma~\ref{lemma:MGMEqConjugateMD}.
There are multiple ways to generalize these operations to the quantum setting, where vectors are replaced by Hermitian matrices.
Whether any such generalization can be used to extend our framework to the quantum setting remains an open problem.

\section*{AI Disclosure}
The authors' main contributions lie in identifying the connections among the optimization problems listed in Section~\ref{sec:Intro}, reformulating existing algorithms as the MGM, and identifying the second-order characterization~\eqref{ineq:IntroHessianBounds}.
ChatGPT constructed the associated surrogate function and established its relative smoothness and relative strong convexity properties.

    






\ifbool{versionArXiv}{
\section*{Acknowledgments}
The authors would like to thank Wei-Fu Tseng for useful discussions. This work was supported by the National Science and Technology Council of Taiwan under Grant Nos. NSTC 112-2628-E-002-019-MY3 and NSTC 115-2221-E-002-208, by the research project “Geometry of Quantum Learning and Optimization” at National Taiwan University under Grant No. NTU-CC-115L893706, and by the Young Theoretical Scientist Award of the Mathematics Division of the National Center for Theoretical Sciences of Taiwan.
}{
\acks{
The authors would like to thank Wei-Fu Tseng for useful discussions. This work was supported by the National Science and Technology Council of Taiwan under Grant Nos. NSTC 112-2628-E-002-019-MY3 and NSTC 115-2221-E-002-208, by the research project “Geometry of Quantum Learning and Optimization” at National Taiwan University under Grant No. NTU-CC-115L893706, and by the Young Theoretical Scientist Award of the Mathematics Division of the National Center for Theoretical Sciences of Taiwan.
}
}

\ifbool{versionArXiv}{
    \bibliography{Contents/list}
}{
    \bibliography{../ArXiv-Version/Contents/list}
}

\appendix

\section{Counterexample for Log-Gradient Convexity}
\label{appendix:LogGradConvexCounterexample}
As discussed in Section~\ref{sec:Intro}, it is known that if the mapping $x\mapsto \log(-\nabla f(x)[j])$ is convex for every coordinate $j$, then the MGM converges at a rate of $\widetilde{O}(1/T)$ for minimizing $f$.
This section provides a counterexample to this log-gradient convexity for the Lewis weight problem~\eqref{def:Lewis}.

Consider the instance with $p=1$, $m=2$, $n=1$, and
$A=(1,1)^{\top}$.
We have
\begin{align}
    f_{\text{Lew}}(x)
    =\log\left(\frac{1}{x[1]}+\frac{1}{x[2]}\right),
    \quad\text{for all } x\in\mathbb{R}_{++}^2.
\end{align}

Furthermore, let $g_1(x)\coloneqq \log(-\nabla f_{\text{Lew}}(x)[1])=\log(x[2])-\log(x[1])-\log(x[1]+x[2])$.
Then, we have $\nabla^2 g_1(x)[2][2] = -x[2]^{-2}+ (x[1]+x[2])^{-2}< 0$ for all $x\in\mathbb{R}_{++}^2$.
This shows that $g_1(x)$ is not convex, thus providing a counterexample to log-gradient convexity for the Lewis weight problem.

\section{Proofs of Main Results}
In 
Appendix~\ref{appendix:ProofConjugateFunctionWellDefine},
Appendix~\ref{appendix:ProofRelativeSmoothnessStrongConvexity},
Appendix~\ref{appendix:ProofMGMEqConjugateMD}, 
Appendix~\ref{appendix:ProofConjugateMDConv},
Appendix~\ref{appendix:ProofEquivalenceOpt},
and Appendix~\ref{appendix:ProofConvergenceRateMGM}, 
we prove 
Lemma~\ref{lemma:ConjugateFunctionWellDefine}, 
Lemma~\ref{lemma:RelativeSmoothnessStrongConvexity}, Lemma~\ref{lemma:MGMEqConjugateMD}, 
Corollary~\ref{corollary:ConjugateMDConv},
Lemma~\ref{lemma:EquivalenceOpt},
and Theorem~\ref{theorem:ConvergenceRateMGM}, respectively, which are the main results of our framework stated in Section~\ref{sec:MainResults}.

\subsection{Proof of Lemma~\ref{lemma:ConjugateFunctionWellDefine}}
\label{appendix:ProofConjugateFunctionWellDefine}

The proof of Lemma~\ref{lemma:ConjugateFunctionWellDefine} is based on 
Lemma~\ref{lemma:ZeroOrderCharacterization}
and Lemma~\ref{lemma:USupLinear}.

\begin{lemma}
    \label{lemma:ZeroOrderCharacterization}
    Let $\theta\in\Set{-1,1}$ and $0\leq\mu\leq L$, and let $f$ be any $\theta$-log-homogeneous, twice differentiable function.
    Furthermore, define the function $U_{\theta,c}^{f}$ as follows:
    \begin{align}
        U_{\theta,c}^{f}(y)
        \coloneqq
        \begin{cases}
            \frac{\exp\left((c+\theta)
            f(y^{\theta/(c+\theta)})\right)}{c+\theta},
            &\quad\text{if }c\neq-\theta,\\
            f(\exp(y)),
            &\quad\text{if }c=-\theta=1,
        \end{cases}
    \end{align}
    where the domain of $U_{\theta,c}^{f}$ is $\mathbb{R}_{++}^m$ in the first case and $\mathbb{R}^m$ in the second case.
    Then, $f$ satisfies the  second-order characterization~\eqref{ineq:IntroHessianBounds} if and only if $U_{\theta,L}^{f}$ is concave and $U_{\theta,\mu}^{f}$ is convex.
\end{lemma}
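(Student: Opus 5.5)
The plan is to compute the Hessian of $U_{\theta,c}^{f}$ directly and show it is a congruence transform, by a positive definite diagonal matrix and up to a positive scalar, of
\begin{align}
    \nabla^2 f(x)+\theta\nabla f(x)\nabla f(x)^{\top}-cH_f(x).
\end{align}
Since $U_{\theta,c}^{f}$ is twice differentiable on an open convex domain, its convexity (respectively concavity) is equivalent to its Hessian being positive (respectively negative) semidefinite everywhere. Applying this with $c=\mu$ and with $c=L$ then gives the two inequalities in~\eqref{ineq:IntroHessianBounds}. Note that the change of variables $x=y^{s}$ with $s\coloneqq\theta/(c+\theta)$, or $x=\exp(y)$ in the second case, is a diffeomorphism onto $\mathbb{R}_{++}^m$. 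Hence ``for all $y$ in the domain'' and ``for all $x\in\mathbb{R}_{++}^m$'' are interchangeable.

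\textbf{Case $c\neq-\theta$.} Write $U(y)=\exp((c+\theta)f(x))/(c+\theta)$ with $x=y^{s}$ and $D\coloneqq\mathrm{Diag}(s\,y^{s-1})$, which is invertible. The chain rule gives $\nabla U(y)=\exp((c+\theta)f(x))\,D\nabla f(x)$, and differentiating again gives
\begin{align}
    \nabla^2U(y)=\exp((c+\theta)f(x))\Big[D\big(\nabla^2 f(x)+(c+\theta)\nabla f(x)\nabla f(x)^{\top}\big)D+s(s-1)\mathrm{Diag}(\nabla f(x)\odot y^{s-2})\Big].
\end{align}
The key bookkeeping step is to absorb the diagonal term into the congruence. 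Since $y^{s-2}=y^{2(s-1)}\odot y^{-s}=y^{2(s-1)}\oslash x$, we have
\begin{align}
    s(s-1)\mathrm{Diag}(\nabla f\odot y^{s-2})=\tfrac{s-1}{s}\,D\,\mathrm{Diag}(\nabla f\oslash x)\,D.
\end{align}
Using $\theta^2=1$, we get $(s-1)/s=1-\theta(c+\theta)=-c\theta$. Therefore
\begin{align}
    \nabla^2U(y)=\exp((c+\theta)f(x))\,D\big[\nabla^2 f+\theta\nabla f\nabla f^{\top}+c\nabla f\nabla f^{\top}-c\,\mathrm{Diag}(\theta\nabla f\oslash x)\big]D.
\end{align}
By the definition of $H_f$, the bracket is exactly $\nabla^2 f+\theta\nabla f\nabla f^{\top}-cH_f$. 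The prefactor is positive (the $1/(c+\theta)$ cancels against the derivative of the exponent), and $D$ is invertible. So $U_{\theta,c}^f$ is convex if and only if $\nabla^2 f+\theta\nabla f\nabla f^{\top}\geq cH_f$ on $\mathbb{R}_{++}^m$, and concave if and only if the reverse inequality holds.

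\textbf{Case $c=-\theta=1$.} Here $U(y)=f(\exp(y))$. With $E=\mathrm{Diag}(x)$ and $x=\exp(y)$,
\begin{align}
    \nabla^2U=E\nabla^2fE+\mathrm{Diag}(\nabla f\odot x)=E\big[\nabla^2 f+\mathrm{Diag}(\nabla f\oslash x)\big]E.
\end{align}
Substituting $\theta=-1$ and $c=1$ into $\nabla^2f+\theta\nabla f\nabla f^{\top}-cH_f$ gives
\begin{align}
    \nabla^2 f-\nabla f\nabla f^{\top}-\mathrm{Diag}(-\nabla f\oslash x)+\nabla f\nabla f^{\top}=\nabla^2f+\mathrm{Diag}(\nabla f\oslash x),
\end{align}
so the same equivalence holds in this case as well.

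Combining the two cases with $c=\mu$ (convexity) and $c=L$ (concavity) proves the lemma. Log-homogeneity of $f$ is not actually needed for this computation beyond ensuring that the objects are well defined.

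The main obstacle is the exponent bookkeeping in the first case. Specifically, the step most likely to go wrong is verifying that $(s-1)/s$ collapses to $-c\theta$, which is what makes the diagonal term recombine into $-cH_f$. I would double-check it against the special value $c=0$, where $U=\exp(\theta f(y^{1}))/\theta$ and the bracket should reduce to $\nabla^2 f+\theta\nabla f\nabla f^{\top}$.
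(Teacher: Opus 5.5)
Your proposal is correct and follows the paper's route. Like the paper, you compute $\nabla^2 U_{\theta,c}^{f}$ as a positive scalar times a diagonal congruence of $\nabla^2 f+\theta\nabla f\nabla f^{\top}-cH_f$ (your $D$ is $s\,\mathrm{Diag}(y^{-c/(\theta+c)})$, and $\exp((c+\theta)f)s^2$ equals the paper's prefactor $U_{\theta,c}^{f}/(c+\theta)$), then conclude by invertibility of the diagonal factor and the change of variables $x=y^{\theta/(\theta+c)}$ or $x=\exp(y)$. One small nit: $D$ is not positive definite when $s<0$, but, as you note later, invertibility is all the argument needs.
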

\begin{proof}
    Since $U_{\theta,c}^f$ is twice differentiable, its concavity is equivalent to
    $\nabla^2 U_{\theta,c}^f(y)\leq0$ throughout its domain,
    and its convexity is equivalent to
    $\nabla^2 U_{\theta,c}^f(y)\geq0$ throughout its domain.

    For the case $c\neq -\theta$, we compute the Hessian matrix $\nabla^2 U_{\theta,c}^f(y)$ as follows:
    \begin{align}
        \label{eq:HessianUthetaNeqNegTheta}
        \nabla^2 U_{\theta,c}^f(y)
        =\frac{U_{\theta,c}^f(y)}{c+\theta}\mathrm{Diag}\left(y^{-c/(\theta+c)}\right)\left(H_1-cH_2\right)\mathrm{Diag}\left(y^{-c/(\theta+c)}\right),
    \end{align}
    where 
    \begin{align}
        H_1
        =\nabla^2 f(y^{\theta/(\theta+ c)})+\theta\nabla f(y^{\theta/(\theta+ c)})\nabla f(y^{\theta/(\theta+ c)})^{\top},
    \end{align}
    and
    \begin{align}
        H_2
        =\mathrm{Diag}\left(\theta \nabla f(y^{\theta/(\theta+ c)})\oslash y^{\theta/(\theta+c)}\right)-\nabla f(y^{\theta/(\theta+ c)})\nabla f(y^{\theta/(\theta+ c)})^{\top}.
    \end{align}

    For the case $c=-\theta=1$, we have
    \begin{align}
        \label{eq:HessianUthetaEqNegTheta}
        \nabla^2 U_{-1,1}^f(y)
        =\mathrm{Diag}\left(\exp(y)\right)\left(\bar{H}_1-\bar{H}_2\right)\mathrm{Diag}\left(\exp(y)\right),
    \end{align}
    where
    \begin{align}
        \bar{H}_1
        =\nabla^2 f(\exp( y)),
    \end{align}
    and
    \begin{align}
        \bar{H}_2
        =\mathrm{Diag}\left(-\nabla f(\exp( y))\oslash \exp(y)\right).
    \end{align}

    Since $\mathrm{Diag}(y^{-c/(\theta+c)})$ and $\mathrm{Diag}(\exp(y))$ are both invertible, the concavity or convexity of $U_{\theta,c}^f$ is determined by the definiteness of $H_1-cH_2$ and $\bar{H}_1-\bar{H}_2$, respectively.

    The proof then concludes by substituting $x=y^{\theta/(\theta+c)}$  and $x=\exp( y)$ in the equation~\eqref{eq:HessianUthetaNeqNegTheta} and in the equation~\eqref{eq:HessianUthetaEqNegTheta}, respectively.
\end{proof}

\begin{lemma}
    \label{lemma:USupLinear}
    Let $f$ be as in Lemma~\ref{lemma:RelativeSmoothnessStrongConvexity}.
    If $\theta+L\neq 0$, then we have
    \begin{align}
        U_{\theta,L}^{f}(y)
        =\inf_{z\in\mathbb{R}_{++}^m}\inner{\nabla U_{\theta,L}^{f}(z)}{y},\quad\text{for all } y\in\mathbb{R}_{++}^m,
    \end{align}
    where $U_{\theta,c}^f$ is defined as in Lemma~\ref{lemma:ZeroOrderCharacterization}.
    On the other hand, if $\theta+\mu\neq 0$, then we have
    \begin{align}
        U_{\theta,\mu}^{f}(y)
        =\sup_{z\in\mathbb{R}_{++}^m}\inner{\nabla U_{\theta,\mu}^{f}(z)}{y},\quad\text{for all } y\in\mathbb{R}_{++}^m.
    \end{align}
\end{lemma}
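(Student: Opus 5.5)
The plan is to establish two facts for $U\coloneqq U_{\theta,c}^f$ with $c\in\Set{L,\mu}$, $c+\theta\neq 0$: that $U$ is positively $1$-homogeneous on $\mathbb{R}_{++}^m$, and that it is concave when $c=L$ and convex when $c=\mu$. Both representations then follow from the usual supporting-hyperplane argument for homogeneous concave or convex functions.

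\textbf{Homogeneity.} For $r>0$, $\theta$-log-homogeneity of $f$ gives
\begin{align}
    f\big((ry)^{\theta/(c+\theta)}\big)=f\big(y^{\theta/(c+\theta)}\big)+\frac{\theta^2}{c+\theta}\log r = f\big(y^{\theta/(c+\theta)}\big)+\frac{1}{c+\theta}\log r,
\end{align}
using $\theta^2=1$. Hence $U(ry)=rU(y)$. Euler's identity then yields $\inner{\nabla U(y)}{y}=U(y)$ for every $y\in\mathbb{R}_{++}^m$. The differentiability of $f$ needed here is implicit in the second-order characterization~\eqref{ineq:IntroHessianBounds}.

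\textbf{Concavity and convexity.} By Lemma~\ref{lemma:ZeroOrderCharacterization}, the characterization~\eqref{ineq:IntroHessianBounds} makes $U_{\theta,L}^f$ concave and $U_{\theta,\mu}^f$ convex on $\mathbb{R}_{++}^m$. I will check that the parameter restrictions of Lemma~\ref{lemma:RelativeSmoothnessStrongConvexity} are compatible with the case $c\neq-\theta$ used there. This only matters when $c+\theta\neq 0$, which is exactly the hypothesis of the statement.

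\textbf{Concave case ($c=L$).} The gradient inequality for the concave differentiable $U$ at $z$ gives $U(y)\le U(z)+\inner{\nabla U(z)}{y-z}$. Substituting Euler's identity $U(z)=\inner{\nabla U(z)}{z}$ reduces this to $U(y)\le \inner{\nabla U(z)}{y}$ for all $z\in\mathbb{R}_{++}^m$. Taking $z=y$ gives equality, again by Euler. So the infimum over $z$ equals $U(y)$ and is attained at $z=y$.

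\textbf{Convex case ($c=\mu$).} The same argument with the inequality reversed gives $U(y)\ge\inner{\nabla U(z)}{y}$, with equality at $z=y$. This yields the supremum formula.

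The main point of care is the sign bookkeeping. Concavity and convexity are asserted for $U$ itself, and the factor $1/(c+\theta)$ may be negative, for instance when $\theta=-1$ and $c<1$. This is harmless: Lemma~\ref{lemma:ZeroOrderCharacterization} already states concavity and convexity of $U$ including that factor, and the homogeneity computation does not depend on the sign. The remaining technical check is that $\mathbb{R}_{++}^m$ is open and convex, so the first-order characterizations apply there.
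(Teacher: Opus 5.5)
Your proposal is correct and follows essentially the same route as the paper. The paper also takes concavity and convexity of $U$ from Lemma~\ref{lemma:ZeroOrderCharacterization}, writes the tangent-plane inequality at $z$, removes the constant term using $\inner{\nabla U(z)}{z}=U(z)$, and notes equality at $z=y$. The only cosmetic difference is that you derive this Euler identity from the $1$-homogeneity of $U$, whereas the paper computes $\nabla U$ explicitly and invokes Lemma~\ref{lemma:SelfNormalize}, which is the same fact.
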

The proof of Lemma~\ref{lemma:USupLinear} is based on Lemma~\ref{lemma:SelfNormalize}.
\begin{lemma}
    \label{lemma:SelfNormalize}
    Let $f$ be differentiable and $\theta$-log-homogeneous on $\mathbb{R}_{++}^m$.
    Then, we have $\inner{\nabla f(x)}{x}=\theta$ for all $x\in\mathbb{R}_{++}^m$.
\end{lemma}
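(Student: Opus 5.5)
This is Euler's identity for homogeneous functions, adapted to the logarithmic setting, and I would prove it by differentiating the defining identity of log-homogeneity along rays.

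Fix $x\in\mathbb{R}_{++}^m$ and consider the scalar function $h(r)\coloneqq f(rx)$ for $r>0$. Since $\mathbb{R}_{++}^m$ is a cone, $rx\in\mathbb{R}_{++}^m$ for every $r>0$. By $\theta$-log-homogeneity, $h(r)=f(x)+\theta\log r$. The right-hand side is differentiable in $r$ with derivative $\theta/r$. Because $f$ is differentiable on $\mathbb{R}_{++}^m$, the chain rule gives
\begin{align}
    h'(r)=\inner{\nabla f(rx)}{x}.
\end{align}
Equating the two expressions for $h'(r)$ yields $\inner{\nabla f(rx)}{x}=\theta/r$ for all $r>0$. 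Evaluating at $r=1$ gives $\inner{\nabla f(x)}{x}=\theta$.

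A byproduct worth recording is that $\nabla f(rx)=r^{-1}\nabla f(x)$. This follows by differentiating $f(rx)=f(x)+\theta\log r$ with respect to $x$ instead of $r$, which gives $r\nabla f(rx)=\nabla f(x)$. This $(-1)$-homogeneity of the gradient is likely what the later proof of Lemma~\ref{lemma:USupLinear} will combine with the present identity, but it is not needed here.

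There is no genuine obstacle. The only point to check is that the differentiation is legitimate, which holds because $rx$ stays in the open set $\mathbb{R}_{++}^m$ where $f$ is assumed differentiable, so the chain rule applies at every $r>0$.
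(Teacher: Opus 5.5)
Your proof is correct and is essentially identical to the paper's: differentiate $f(rx)=f(x)+\theta\log r$ with respect to $r$ to obtain $\inner{\nabla f(rx)}{x}=\theta/r$, then set $r=1$. Your side remark $\nabla f(rx)=r^{-1}\nabla f(x)$ is also correct and is used later in the paper, though it is not needed for this lemma.
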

\begin{proof}
    From the log-homogeneity of $f$, we have $f(rx)=f(x)+\theta\log(r)$ for all $r>0$ and $x\in\mathbb{R}_{++}^m$.
    Differentiating both sides with respect to $r$, we obtain
    \begin{align}
        \inner{\nabla f(rx)}{x}
        =\frac{\theta}{r}.
    \end{align}
    The proof then concludes by setting $r=1$.
\end{proof}

\begin{proofof}{Lemma~\ref{lemma:USupLinear}}
    By Lemma~\ref{lemma:ZeroOrderCharacterization}, we know that $U_{\theta,L}^{f}$ is concave and $U_{\theta,\mu}^{f}$ is convex.

    By the concavity of $U_{\theta,L}^{f}$, for all $y\in\mathbb{R}_{++}^m$, we have
    \begin{align}
        U_{\theta,L}^{f}(y)
        &\leq U_{\theta,L}^{f}(z)+\inner{\nabla U_{\theta,L}^{f}(z)}{y-z}\nonumber\\
        &=\frac{\exp\left((L+\theta)f\left(z^{\theta/(L+\theta)}\right)\right)}{L+\theta}\nonumber\\
        &\quad+\inner{\frac{\theta\exp\left((L+\theta)f\left(z^{\theta/(L+\theta)}\right)\right)\nabla f\left(z^{\theta/(L+\theta)}\right)\odot z^{-L/(L+\theta)}}{L+\theta}}{y-z}\nonumber\\
        &=\inner{\frac{\theta\exp\left((L+\theta)f\left(z^{\theta/(L+\theta)}\right)\right)\nabla f\left(z^{\theta/(L+\theta)}\right)\odot z^{-L/(L+\theta)}}{L+\theta}}{y}\nonumber\\
        &=\inner{\nabla U_{\theta,L}^{f}(z)}{y},
    \end{align}
    where the second equality follows from Lemma~\ref{lemma:SelfNormalize} and the inequality becomes equality when $y=z$.
    Taking the infimum over all $z\in\mathbb{R}_{++}^m$ on the right-hand side completes the proof of the first part of the lemma.

    Similarly, by the convexity of $U_{\theta,\mu}^{f}$, for all $y\in\mathbb{R}_{++}^m$, we have
    \begin{align}
        U_{\theta,\mu}^{f}(y)
        &\geq U_{\theta,\mu}^{f}(z)+\inner{\nabla U_{\theta,\mu}^{f}(z)}{y-z}\nonumber\\
        &=\inner{\nabla U_{\theta,\mu}^{f}(z)}{y},\nonumber
    \end{align}
    where the equality follows from Lemma~\ref{lemma:SelfNormalize} and the inequality becomes equality when $y=z$.
    Taking the supremum over all $z\in\mathbb{R}_{++}^m$ on the right-hand side completes the proof.
\end{proofof}

\begin{proofof}{Lemma~\ref{lemma:ConjugateFunctionWellDefine}}
    By Lemma~\ref{lemma:ZeroOrderCharacterization}, we know that $U_{\theta,L}^{f}$ is concave, and $U_{\theta,\mu}^{f}$ is convex.

    For the case $\theta=-1$ and $\mu\leq L=1$, as well as the case $\theta=-1$ and $1=\mu< L$, the convexity of $\varphi$ directly follows from the concavity of $U_{\theta,L}^{f}$ and the convexity of $U_{\theta,\mu}^{f}$, respectively.

    For the case $\theta=-1$ and $L<1$, we have
    \begin{align}
        \varphi(y)
        &=-f(\exp(y))\nonumber\\
        &=\frac{\log\left((L-1)U_{-1,L}^{f}\left(\exp((1-L)y)\right)\right)}{1-L}\nonumber
        \\
        &=\frac{\log\left((L-1)\left(\inf_{z\in\mathbb{R}_{++}^m}\inner{\nabla U_{-1,L}^{f}(z)}{\exp((1-L)y)}\right)\right)}{1-L}\nonumber
        \\
        &=\sup_{z\in\mathbb{R}_{++}^m}\frac{\log\left((L-1)\left(\inner{\nabla U_{-1,L}^{f}(z)}{\exp((1-L)y)}\right)\right)}{1-L},\nonumber
    \end{align}
    where the third equality follows from Lemma~\ref{lemma:USupLinear}.
    We note that, for any fixed $z\in\mathbb{R}_{++}^m$, the function $\log\left((L-1)\left(\inner{\nabla U_{-1,L}^{f}(z)}{\exp((1-L)y)}\right)\right)$ is convex in $y$.
    The convexity of $\varphi(y)$ then follows from the fact that the supremum of convex functions is convex.

    Similarly, for the case $\theta=-1$ and $\mu>1$, as well as the case $\theta=1$, we have
    \begin{align}
        \varphi(y)
        &=f(\exp(\theta y))\nonumber\\
        &=\frac{\log\left((\mu+\theta)U_{\theta,\mu}^{f}\left(\exp((\mu+\theta)y)\right)\right)}{\mu+\theta}\nonumber
        \\
        &=\frac{\log\left((\mu+\theta)\left(\sup_{z\in\mathbb{R}_{++}^m}\inner{\nabla U_{\theta,\mu}^{f}(z)}{\exp((\mu+\theta)y)}\right)\right)}{\mu+\theta}\nonumber
        \\
        &=\sup_{z\in\mathbb{R}_{++}^m}\frac{\log\left((\mu+\theta)\left(\inner{\nabla U_{\theta,\mu}^{f}(z)}{\exp((\mu+\theta)y)}\right)\right)}{\mu+\theta},\nonumber
    \end{align}
    which is also a supremum of convex functions, and hence convex in $y$.
    This concludes the proof.
\end{proofof}

\subsection{Proof of Lemma~\ref{lemma:RelativeSmoothnessStrongConvexity}}
\label{appendix:ProofRelativeSmoothnessStrongConvexity}

\begin{proof}
    For $\theta=-1$ and $\mu=L=1$,
    by Lemma~\ref{lemma:ZeroOrderCharacterization},
    $U_{-1,1}^f(y)=f(\exp(y))$ is both convex and concave,
    and is therefore affine. 
    Hence, we have $\varphi(y)=\inner{a}{y}+b$ for some constant vector $a$ and scalar $b$.
    By the assumption that $\theta\nabla f(x)>0$ for all $x\in\mathbb{R}_{++}^m$, as well as Lemma~\ref{lemma:SelfNormalize}, we must have $a=\nabla \varphi(y)=-\exp(y)\odot\nabla f(\exp(y))\in\mathrm{ri}(\Delta_m)$.
    This implies that $\mathcal{C}=\{a\}$ is a singleton.
    Hence, Lemma~\ref{lemma:RelativeSmoothnessStrongConvexity} is trivially satisfied for this special case.

    The relative smoothness is trivially satisfied for the special case $\theta=-1$ and $\mu<L=1$.
    This is because $L\varphi^*(q)-G^f(q)=h(q)$ is convex, where $h(q)=\inner{q}{\log(q)}$.

    Similarly, the relative strong convexity is also trivially satisfied for the special case $\theta=-1$ and $1=\mu<L$, since $G^f(q)-\mu\varphi^*(q)=h(q)$.

    To establish the relative smoothness, it remains to consider the case $\theta=-1$ and $L\neq 1$, and the case $\theta=1$.
    Similarly, to establish the relative strong convexity, it remains to consider the case $\theta=-1$ and $\mu\neq 1$, and the case $\theta=1$.
    
    To prove the relative smoothness for the case $\theta=-1$ and $L< 1$ is equivalent to proving the convexity of the function $h(q)-(1-L)\varphi^{*}(q)$, for which we will use the convexity of the function $(L-1)U_{-1,L}^f(y)$ established in Lemma~\ref{lemma:ZeroOrderCharacterization}.

    In particular, we write
    \begin{align}
        \label{eq:ScalarInfPrototype}
        &h(q)-(1-L)\varphi^{*}(q)\nonumber\\
        &\quad=\inf_{y\in\mathbb{R}^{m}}h(q)-(1-L)\inner{q}{y}-(1-L)f(\exp(y))\nonumber\\
        &\quad=\inf_{w\in\mathbb{R}_{++}^{m}}\inner{q}{\log(q)-\log(w)}-(1-L)f(w^{1/(1-L)})\nonumber\\
        &\quad=\inf_{w\in\mathbb{R}_{++}^{m}}\inner{q}{\log(q)-\log(w)}+\log((L-1)U_{-1,L}^f(w))\nonumber\\
        &\quad=\inf_{w\in\mathbb{R}_{++}^{m},t>0}\inner{q}{\log(q)-\log(w)}+t(L-1)U_{-1,L}^f(w)-\log(t)-1\nonumber\\
        &\quad=\inf_{w\in\mathbb{R}_{++}^{m},t>0}\inner{q}{\log(q)-\log(tw)}+(L-1)U_{-1,L}^f(tw)-1\nonumber\\
        &\quad=\inf_{z\in\mathbb{R}_{++}^{m}}\inner{q}{\log(q)-\log(z)}+(L-1)U_{-1,L}^f(z)-1,\quad\text{for all } q\in\Delta_m,
    \end{align}
    where the fourth equality follows from the fact that $\log(u)=\inf_{t>0}tu-\log(t)-1$ for all $u>0$, and the fifth equality follows from the fact that $U_{-1,L}^f(tw)=tU_{-1,L}^f(w)$ due to the log-homogeneity of $f$.
    The convexity of $h(q)-(1-L)\varphi^{*}(q)$ then follows from the joint convexity of the function $J(q,z)=\inner{q}{\log(q)-\log(z)}+(L-1)U_{-1,L}^f(z)-1$.
    This concludes the proof of the relative smoothness for the case $\theta=-1$ and $L< 1$.

    Similarly, to prove the relative strong convexity for the case $\theta=-1$ and $\mu> 1$, as well as the case $\theta=1$, we write
    \begin{align}
        &h(q)-(\mu+\theta)\varphi^{*}(q)\nonumber\\
        &\quad=\inf_{y\in\mathbb{R}^{m}}h(q)-(\mu+\theta)\inner{q}{y}+(\mu+\theta)f(\exp(\theta y))\nonumber\\
        &\quad=\inf_{w\in\mathbb{R}_{++}^{m}}\inner{q}{\log(q)-\log(w)}+(\mu+\theta)f(w^{\theta/(\theta+\mu)})\nonumber\\
        &\quad=\inf_{w\in\mathbb{R}_{++}^{m}}\inner{q}{\log(q)-\log(w)}+\log((\theta+\mu)U_{\theta,\mu}^f(w))\nonumber\\
        &\quad=\inf_{z\in\mathbb{R}_{++}^{m}}\tilde{J}(q,z),
    \end{align}
    where $\tilde{J}(q,z)=\inner{q}{\log(q)-\log(z)}+(\theta+\mu)U_{\theta,\mu}^f(z)-1$ and the last equality follows from the same argument as in the equality~\eqref{eq:ScalarInfPrototype}.
    Then, the desired relative strong convexity follows from the joint convexity of $\tilde{J}(q,z)$.

    On the other hand, to prove the relative strong convexity for the case $\theta=-1$ and $\mu<1$ is equivalent to proving the convexity of the function $(1-\mu)\varphi^{*}(q)-h(q)$.
    By Lemma~\ref{lemma:USupLinear}, we write
    \begin{align}
        \label{eq:LinearMaxPrototype}
        (1-\mu)\varphi^{*}(q)-h(q)
        &=\sup_{y\in\mathbb{R}^{m}}(1-\mu)\inner{q}{y}+(1-\mu)f(\exp(y))-h(q)\nonumber\\
        &=\sup_{w\in\mathbb{R}_{++}^{m}}\inner{q}{\log(w)}+(1-\mu)f(w^{1/(1-\mu)})-h(q)\nonumber\\
        &=\sup_{w\in\mathbb{R}_{++}^{m}}\inner{q}{\log(w)-\log(q)}-\log((\mu-1)U_{-1,\mu}^f(w))\nonumber\\
        &=\sup_{w\in\mathbb{R}_{++}^{m}}\inner{q}{\log(w)-\log(q)}-\inf_{s\in\mathbb{R}_{++}^{m}}\log((\mu-1)\inner{\nabla U_{-1,\mu}^f(s)}{w})\nonumber\\
        &=\sup_{s\in\mathbb{R}_{++}^{m}}\sup_{w\in\mathbb{R}_{++}^{m}}\inner{q}{\log(w)-\log(q)}-\log((\mu-1)\inner{\nabla U_{-1,\mu}^f(s)}{w})\nonumber\\
        &=\sup_{s\in\mathbb{R}_{++}^{m}}\inner{q}{-\log((\mu-1)\nabla U_{-1,\mu}^f(s))},\quad\text{for all } q\in\Delta_m,
    \end{align}
    which is a supremum of linear functions.
    Hence, $(1-\mu)\varphi^{*}(q)-h(q)$ is convex.

    Similarly, to prove the relative smoothness for the case $\theta=-1$ and $L> 1$, as well as the case $\theta=1$, we write
    \begin{align}
        (L+\theta)\varphi^{*}(q)-h(q)
        &=\sup_{y\in\mathbb{R}^{m}}(L+\theta)\inner{q}{y}-(L+\theta)f(\exp(\theta y))-h(q)\nonumber\\
        &=\sup_{w\in\mathbb{R}_{++}^{m}}\inner{q}{\log(w)}-(L+\theta)f(w^{\theta/(L+\theta)})-h(q)\nonumber\\
        &=\sup_{w\in\mathbb{R}_{++}^{m}}\inner{q}{\log(w)-\log(q)}-\log((L+\theta)U_{\theta,L}^f(w))\nonumber\\
        &=\sup_{s\in\mathbb{R}_{++}^m}
        \inner{q}{-\log((L+\theta)\nabla U_{\theta,L}^f(s))},\nonumber
    \end{align}
    where the last equality follows from the same argument as in the equality~\eqref{eq:LinearMaxPrototype}.
    The desired relative smoothness follows from the fact that $(L+\theta)\varphi^{*}(q)-h(q)$ is a supremum of linear functions, and hence convex.
\end{proof}

\subsection{Proof of Lemma~\ref{lemma:MGMEqConjugateMD}}
\label{appendix:ProofMGMEqConjugateMD}
The proof of Lemma~\ref{lemma:MGMEqConjugateMD} is based on
Lemma~\ref{lemma:FenchelSubgradient} and
Lemma~\ref{lemma:RelativeSmoothnessStrongConvexitySubgradient}.

\begin{lemma}[{[\citealp{Rockafellar1970b}, Theorem~23.5 and Corollary~23.5.1]}]
    \label{lemma:FenchelSubgradient}
    For any function $\tilde{\varphi}$ that is closed, proper, and convex, we have
    $x\in\partial \tilde{\varphi}^{*}(x^{*})$ if and only if $x^{*}\in\partial \tilde{\varphi}(x)$.
    Furthermore, if $x^{*}\in\partial \tilde{\varphi}(x)$, then $\tilde{\varphi}(x)+\tilde{\varphi}^{*}(x^{*})=\inner{x}{x^{*}}$.
\end{lemma}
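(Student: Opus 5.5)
We will prove both parts through the Fenchel--Young inequality and its equality case, using the biconjugation theorem to make the argument symmetric in $\tilde{\varphi}$ and $\tilde{\varphi}^{*}$.

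The first step is Fenchel--Young. By definition $\tilde{\varphi}^{*}(z^{*})=\sup_{z}\{\inner{z}{z^{*}}-\tilde{\varphi}(z)\}$. Hence
\begin{align}
    \tilde{\varphi}(z)+\tilde{\varphi}^{*}(z^{*})\geq\inner{z}{z^{*}}
\end{align}
for all $z,z^{*}$, where properness of $\tilde{\varphi}$ ensures that no $\infty-\infty$ expression occurs.

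The second step characterizes the equality case: we show that $x^{*}\in\partial\tilde{\varphi}(x)$ if and only if $\tilde{\varphi}(x)+\tilde{\varphi}^{*}(x^{*})=\inner{x}{x^{*}}$. The subgradient inequality $\tilde{\varphi}(z)\geq\tilde{\varphi}(x)+\inner{x^{*}}{z-x}$ for all $z$ can be rewritten as
\begin{align}
    \inner{x^{*}}{z}-\tilde{\varphi}(z)\leq\inner{x^{*}}{x}-\tilde{\varphi}(x)\quad\text{for all } z .
\end{align}
Here $\tilde{\varphi}(x)$ is finite whenever $\partial\tilde{\varphi}(x)\neq\emptyset$. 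Taking the supremum over $z$, the displayed inequality holds for all $z$ if and only if $\tilde{\varphi}^{*}(x^{*})\leq\inner{x^{*}}{x}-\tilde{\varphi}(x)$. Combined with Fenchel--Young, this is exactly the claimed equality. This already proves the second assertion of the lemma.

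The third step transfers the characterization to the conjugate. Applying the second step with $\tilde{\varphi}^{*}$ in place of $\tilde{\varphi}$, which is legitimate because $\tilde{\varphi}^{*}$ is itself closed, proper and convex, gives
\begin{align}
    x\in\partial\tilde{\varphi}^{*}(x^{*})\iff\tilde{\varphi}^{*}(x^{*})+\tilde{\varphi}^{**}(x)=\inner{x}{x^{*}}.
\end{align}
The Fenchel--Moreau theorem gives $\tilde{\varphi}^{**}=\tilde{\varphi}$ for closed proper convex $\tilde{\varphi}$. The right-hand side therefore coincides with the equality condition from the second step, and this yields the equivalence $x\in\partial\tilde{\varphi}^{*}(x^{*})\iff x^{*}\in\partial\tilde{\varphi}(x)$.

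The only nontrivial ingredient is the biconjugation identity $\tilde{\varphi}^{**}=\tilde{\varphi}$. This is where closedness is essential, since it is proved by separating the closed convex epigraph from points strictly below it. We will simply cite this classical result (Rockafellar, Theorem~12.2) rather than reprove it. Everything else is a direct manipulation of definitions.
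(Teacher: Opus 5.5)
Your argument is correct and is essentially the proof of Theorem~23.5 in Rockafellar, which the paper cites without reproducing. That proof likewise characterizes $x^{*}\in\partial\tilde{\varphi}(x)$ as the equality case of Fenchel--Young, then applies the same characterization to $\tilde{\varphi}^{*}$ and uses $\tilde{\varphi}^{**}=\tilde{\varphi}$ for closed proper convex $\tilde{\varphi}$. One detail is worth stating: in the direction from the equality to the subgradient inequality, finiteness of $\tilde{\varphi}(x)$ follows from the equality itself, since both $\tilde{\varphi}$ and $\tilde{\varphi}^{*}$ are $>-\infty$ by properness.
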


\begin{lemma}
    \label{lemma:RelativeSmoothnessStrongConvexitySubgradient}
    Let $r_t$ and $g_t$ be defined as in Lemma~\ref{lemma:MGMEqConjugateMD}.
    Then, we have $g_t-\mu r_t \in \partial (G^f-\mu \varphi^{*})(q_t)$ and $Lr_t-g_t\in \partial (L\varphi^{*}-G^f)(q_t)$.
    Here, both differences of functions are defined on $\mathcal{C}$ and extended to take the value $\infty$ outside $\mathcal{C}$.
\end{lemma}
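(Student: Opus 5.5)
The plan is to reuse, case by case, the variational representations of the differences $G^f-\mu\varphi^*$ and $L\varphi^*-G^f$ from the proof of Lemma~\ref{lemma:RelativeSmoothnessStrongConvexity}, and at $q_t$ to exhibit an explicit optimizer built from $x_t$. Three facts will be used repeatedly.

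First, I compute $\nabla\varphi(y)=\theta\exp(c_f\theta y)\odot\nabla f(\exp(c_f\theta y))$, so that
\[
q_t=\theta\, x_t\odot\nabla f(x_t).
\]
By Assumption~\ref{assumption:BasicAssumptions} and Lemma~\ref{lemma:SelfNormalize}, this gives $q_t\in\mathrm{ri}(\Delta_m)$. Moreover, $q_t\in\dom\varphi^*$ by \eqref{eq:Conjugate}, so $q_t\in\mathcal{C}$.

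Second, since every function involved is restricted to $\mathcal{C}\subseteq\Delta_m$, a subgradient is determined only modulo multiples of $\mathbf{1}_m$: adding $c\mathbf{1}_m$ does not change $\inner{\cdot}{q-q_t}$ for $q\in\Delta_m$. All identities below therefore only need to hold up to an additive constant vector.

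Third, I write $G^f-\mu\varphi^*=c_f h-(\theta+\mu)\varphi^*$ and $L\varphi^*-G^f=(L+\theta)\varphi^*-c_f h$, where $h(q)=\inner{q}{\log q}$ and $\nabla h(q_t)=\log q_t+\mathbf{1}_m$.

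\emph{Degenerate cases.} When the difference reduces to $\pm h$ (that is, $\theta=-1$ with $\mu=1$ or $L=1$), the claim is a direct computation. For example, with $c_f=1$ we have $g_t-r_t=\log q_t+\mathbf{1}_m=\nabla h(q_t)$. The case $\mu=L=1$ is trivial because $\mathcal{C}$ is a singleton.

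\emph{Supremum-of-linear cases.} These are $(1-\mu)\varphi^*-h$ for $\theta=-1,\mu<1$, and $(L+\theta)\varphi^*-h$ for the smoothness cases with $L+\theta\neq 0$ and $L\neq1$. Here the function equals $\sup_s\inner{q}{\ell(s)}$ with, for instance, $\ell(s)=-\log((L+\theta)\nabla U^f_{\theta,L}(s))$. A supremum of linear functions is bounded below by any single one of them, so $\ell(s_t)$ is a subgradient at $q_t$ as soon as $\inner{q_t}{\ell(s_t)}$ attains the sup. I will take $s_t=x_t^{(L+\theta)/\theta}$ (respectively $x_t^{1-\mu}$), that is, the point with $s_t^{\theta/(\theta+c)}=x_t$. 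From the gradient formula in the proof of Lemma~\ref{lemma:USupLinear},
\[
\nabla U^f_{\theta,L}(s_t)\propto \theta\nabla f(x_t)\odot s_t^{-L/(L+\theta)},
\]
so modulo constants $\ell(s_t)=-\log(q_t)+\log x_t+\tfrac{L}{\theta}\cdots$, which I expect to simplify to $Lr_t-g_t$ (respectively $\mu r_t-g_t$ after the sign flip) modulo $\mathbf{1}_m$. Attainment then follows from tightness of the chain of equalities in \eqref{eq:LinearMaxPrototype} at this choice: the inner $\inf_s$ of Lemma~\ref{lemma:USupLinear} is attained at $s=w$, and the inner $\sup_w$ is attained where $w\propto q\oslash((\mu-1)\nabla U(s))$.

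\emph{Infimum-of-jointly-convex cases.} These are $h-(1-L)\varphi^*$ and $h-(\mu+\theta)\varphi^*$, written as $\inf_z J(q,z)$ with $J$ jointly convex and differentiable on $\mathrm{ri}(\Delta_m)\times\mathbb{R}^m_{++}$. If $z_t\in\mathbb{R}^m_{++}$ satisfies $\nabla_z J(q_t,z_t)=0$, then joint convexity gives, for every $q$ and $z$,
\[
J(q,z)\ge J(q_t,z_t)+\inner{\nabla_q J(q_t,z_t)}{q-q_t}.
\]
Taking the infimum over $z$ shows that $\nabla_qJ(q_t,z_t)=\log q_t-\log z_t+\mathbf{1}_m$ is a subgradient, including at boundary points $q\in\mathcal{C}$ where the value may be taken as a limit. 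I will choose $z_t=x_t^{1-L}$ (respectively $x_t^{\theta+\mu}$) up to a scalar. The condition $q_t\oslash z_t=(L-1)\nabla U^f_{-1,L}(z_t)$ reduces, via the gradient formula and $q_t=\theta x_t\odot\nabla f(x_t)$, to a scalar identity that fixes the scaling using the $1$-homogeneity of $U$. Finally I check that $\log q_t-\log z_t$ matches $g_t-\mu r_t$ or $Lr_t-g_t$ modulo $\mathbf{1}_m$.

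The main obstacle is the bookkeeping in these exponent and sign identities across the case split on $(\theta, c_f, \mu, L)$. In particular, I must confirm that the constant scalings (the factors $\exp((c+\theta)f)$) only contribute multiples of $\mathbf{1}_m$, and that the sign of $c_f$ matches the way $h$ enters each difference. I would handle this first in the generic case $\theta=1$ and then transcribe it to $\theta=-1$.
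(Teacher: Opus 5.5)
Your proposal follows the paper's proof essentially step for step. It uses the same case split, the same supremum-of-linear representation with a tight choice $s_t$ (the paper checks tightness directly via the Fenchel equality for $\varphi^*(q_t)$ rather than through the chain of equalities), and the same jointly convex $J$ with $z_t$ determined by $\nabla_z J(q_t,z_t)=0$, all modulo $\mathbf{1}_m$. Two bookkeeping slips need fixing: for $\theta=-1,\ \mu>1$ the minimizer is $\tilde z_t=\exp(-(\mu+\theta)f(x_t))\,x_t^{\theta(\theta+\mu)}\propto x_t^{1-\mu}$, not $x_t^{\theta+\mu}$; and for $\theta=-1,\ \mu<1$ one has $G^f-\mu\varphi^*=(1-\mu)\varphi^*-h$ itself, so $\ell(s_t)$ equals $g_t-\mu r_t$ modulo $\mathbf{1}_m$ directly, with no sign flip.
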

\begin{proof}
    We first prove that $q_t\in\mathcal{C}$.
    In particular, the fact that $q_t=\theta x_t\odot\nabla f(x_t)\in\mathrm{ri}(\Delta_m)$ follows from Lemma~\ref{lemma:SelfNormalize} and the assumption that $\theta\nabla f(x)>0$ for all $x\in\mathbb{R}_{++}^m$.
    On the other hand, by Lemma~\ref{lemma:FenchelSubgradient}, we have 
    \begin{align}
        \varphi^*(q_t)
        =\inner{q_t}{c_f\theta\log(x_t)}
        -\varphi(c_f\theta\log(x_t))
        <\infty.
    \end{align}
    This shows that $q_t\in\mathcal{C}$.

    For $\theta=-1$ and $\mu=L=1$,
    as established in the proof of Lemma~\ref{lemma:RelativeSmoothnessStrongConvexity} in Appendix~\ref{appendix:ProofRelativeSmoothnessStrongConvexity}, $\mathcal{C}=\{a\}$ is a singleton.
    Hence, Lemma~\ref{lemma:RelativeSmoothnessStrongConvexitySubgradient} is trivially satisfied for this special case.

    The property  $Lr_t - g_t \in \partial (L\varphi^{*} - G^{f})(q_t)$ is trivially satisfied for the special case $\theta=-1$ and $\mu<L=1$, since $L\varphi^*(q)-G^f(q)=h(q)$ and $Lr_t-g_t=\log(q_t)+\mathbf{1}_m=\nabla h(q_t)\in\partial h(q_t)$, where $h(q)=\inner{q}{\log(q)}$.
    Similarly, the property $ g_t-\mu r_t  \in \partial (G^{f}-\mu\varphi^{*} )(q_t)$ is also trivially satisfied for the special case $\theta=-1$ and $1=\mu<L$, since $G^f(q)-\mu\varphi^*(q)=h(q)$.

    To establish the property  $Lr_t - g_t \in \partial (L\varphi^{*} - G^{f})(q_t)$, it remains to consider the case $\theta=-1$ and $L\neq 1$, and the case $\theta=1$.
    Similarly, to establish  the property $ g_t-\mu r_t  \in \partial (G^{f}-\mu\varphi^{*} )(q_t)$, it remains to consider the case $\theta=-1$ and $\mu\neq 1$, and the case $\theta=1$.
    
    To prove that $Lr_t - g_t \in \partial (L\varphi^{*} - G^{f})(q_t)$ for the case $\theta=-1$ and $L< 1$ is equivalent to verifying that $Lr_t-g_t\in\partial(h-(1-L)\varphi^{*})(q_t)$.
    In particular, let $J(\cdot,\cdot)$ be defined as in Appendix~\ref{appendix:ProofRelativeSmoothnessStrongConvexity}.
    Furthermore, let $\nabla_q J(\cdot,\cdot)$ and $\nabla_z J(\cdot,\cdot)$ denote the gradients of $J$ with respect to its first and second arguments, respectively.
    Assume that $\nabla_z J(q_t,z_t)=0$ for  $z_t=\exp\left((1-L)f(x_t)\right)x_t^{1-L}$.
    Then, we have $h(q_t)-(1-L)\varphi^{*}(q_t)=J(q_t,z_t)$.
    From the convexity of $J$, we have
    \begin{align}
        &J(q,z)\nonumber\\
        &\quad\geq J(q_t,z_t) + \inner{\nabla_q J(q_t,z_t)}{q-q_t} + \inner{\nabla_z J(q_t,z_t)}{z-z_t}\nonumber\\
        &\quad= h(q_t)-(1-L)\varphi^{*}(q_t) + \inner{\nabla_q J(q_t,z_t)}{q-q_t}\nonumber\\
        &\quad= h(q_t)-(1-L)\varphi^{*}(q_t) + \inner{\log(q_t)-\log(z_t)}{q-q_t}\nonumber\\
        &\quad= h(q_t)-(1-L)\varphi^{*}(q_t) + \inner{Lr_t-g_t}{q-q_t},\quad \text{for all } q\in\Delta_m,
    \end{align}
    where the last equality follows from the fact that $\inner{\mathbf{1}_m}{q-q_t}=0$ for all $q\in\Delta_m$.
    By taking the infimum over $z$ on both sides of the inequality, we obtain
    \begin{align}
        \label{eq:ScalarInfPrototypeSubgradient}
        h(q)-(1-L)\varphi^{*}(q)
        &\geq h(q_t)-(1-L)\varphi^{*}(q_t) + \inner{Lr_t-g_t}{q-q_t},\quad \text{for all } q\in\Delta_m.
    \end{align}
    This implies that $Lr_t-g_t$ is a subgradient of $h-(1-L)\varphi^*$ at $q_t$, provided that  $\nabla_z J(q_t,z_t)=0$ for  $z_t=\exp\left((1-L)f(x_t)\right)x_t^{1-L}$.

    To prove that $\nabla_z J(q_t,z_t)=0$, we write
    \begin{align}
        (L-1)U_{-1,L}^f(z_t)
        &=\exp\left((L-1)f(\exp(f(x_t))x_t)\right)\nonumber\\
        &=\exp\left((L-1)(f(x_t)-\log(\exp(f(x_t))))\right)\nonumber\\
        &=1,
    \end{align}
    where the second equality follows from the log-homogeneity of $f$.
    Then, we have
    \begin{align}
        \label{eq:ScalarInfPrototypeGrad0}
        \nabla_z J(q_t,z_t) 
        &= -q_t\oslash z_t + (L-1) \nabla U_{-1,L}^f(z_t) \nonumber\\
        &=\exp\left((L-1)f(x_t)\right)\nabla f(x_t)\odot x_t^{L} -(L-1)U_{-1,L}^f(z_t)\nabla f(z_t^{1/(1-L)})\odot z_t^{L/(1-L)}\nonumber\\
        &=\exp\left((L-1)f(x_t)\right)\nabla f(x_t)\odot x_t^{L} -\nabla f(z_t^{1/(1-L)})\odot z_t^{L/(1-L)}\nonumber\\
        &=0,
    \end{align}
    where the last equality follows from the equality $\nabla f(cx)=\nabla f(x)/c$ due to the log-homogeneity of $f$.
    This concludes the proof of the property $Lr_t - g_t \in \partial (L\varphi^{*} - G^{f})(q_t)$ for the case $\theta=-1$ and $L< 1$.

    Similarly, to prove that $g_t-\mu r_t  \in \partial (G^{f}-\mu\varphi^{*} )(q_t)$ for the case $\theta=-1$ and $\mu> 1$, as well as the case $\theta=1$, let $\tilde{J}(\cdot,\cdot)$ be defined as in Appendix~\ref{appendix:ProofRelativeSmoothnessStrongConvexity}.
    Then, by the same arguments as in the equality~\eqref{eq:ScalarInfPrototypeGrad0} and the inequality~\eqref{eq:ScalarInfPrototypeSubgradient}, we have $\nabla_z\tilde{J}(q_t,\tilde{z}_t)=0$ for $\tilde{z}_t=\exp\left(-(\mu+\theta)f(x_t)\right)x_t^{\theta(\mu+\theta)}$, and $g_t-\mu r_t$ is also a subgradient of $h-(\mu+\theta)\varphi^*$ at $q_t$.
    This proves the property $g_t-\mu r_t  \in \partial (G^{f}-\mu\varphi^{*} )(q_t)$ for the case $\theta=-1$ and $\mu>1$, as well as the case $\theta=1$.

    On the other hand, to prove that $g_t-\mu r_t  \in \partial (G^{f}-\mu\varphi^{*} )(q_t)$ for the case $\theta=-1$ and $\mu<1$ is equivalent to verifying that $g_t-\mu r_t\in\partial((1-\mu)\varphi^{*}-h)(q_t)$.
    In particular, for $s_t=x_t^{1-\mu}$, we have
    \begin{align}
        &\inner{q_t}{-\log((\mu-1)\nabla U_{-1,\mu}^f(s_t))}\nonumber\\
        &\quad=\inner{q_t}{(1-\mu)\log(x_t)-\log(q_t)-\log((\mu-1)U_{-1,\mu}^f(s_t))\mathbf{1}_m}\nonumber\\
        &\quad=(1-\mu)\inner{q_t}{\log(x_t)}+(1-\mu)f(x_t)-h(q_t)\nonumber\\
        &\quad=(1-\mu)(\inner{q_t}{\log(x_t)}-\varphi(\log(x_t)))-h(q_t)\nonumber\\
        &\quad=(1-\mu)\varphi^{*}(q_t)-h(q_t),
    \end{align}
    where the last equality follows from Lemma~\ref{lemma:FenchelSubgradient}, Lemma~\ref{lemma:ConjugateFunctionWellDefine}, and the fact that $q_t=\nabla\varphi(\log(x_t))$.
    Combining this with the equality~\eqref{eq:LinearMaxPrototype} in Appendix~\ref{appendix:ProofRelativeSmoothnessStrongConvexity}, we obtain that
    \begin{align}
        \label{eq:LinearMaxPrototypeSubgradient}
        (1-\mu)\varphi^{*}(q)-h(q)
        &=\sup_{s\in\mathbb{R}_{++}^{m}}\inner{q}{-\log((\mu-1)\nabla U_{-1,\mu}^f(s))}\nonumber\\
        &\geq \inner{q}{-\log((\mu-1)\nabla U_{-1,\mu}^f(s_t))}\nonumber\\
        &=(1-\mu)\varphi^{*}(q_t)-h(q_t)+\inner{q-q_t}{-\log((\mu-1)\nabla U_{-1,\mu}^f(s_t))}\nonumber\\
        &=(1-\mu)\varphi^{*}(q_t)-h(q_t)+\inner{q-q_t}{g_t-\mu r_t},\quad\text{for all } q\in\Delta_m.
    \end{align}
    This implies that $g_t-\mu r_t$ is a subgradient of $(1-\mu)\varphi^*-h$ at $q_t$.

    Similarly, to prove that $Lr_t - g_t \in \partial (L\varphi^{*} - G^{f})(q_t)$ for the case $\theta=-1$ and $L> 1$, as well as the case $\theta=1$, 
    we write
    \begin{align}
        \inner{q_t}{-\log((L+\theta)\nabla U_{\theta,L}^f(\tilde{s}_t))}
        &=\inner{q_t}{\theta(\theta+L)\log(x_t)-\log(q_t)-\log((L+\theta)U_{\theta,L}^f(\tilde{s}_t))\mathbf{1}_m}\nonumber\\
        &=(\theta+L)\inner{q_t}{\theta\log(x_t)}-(\theta+L)f(x_t)-h(q_t)\nonumber\\
        &=(\theta+L)(\inner{q_t}{\theta\log(x_t)}-\varphi(\theta\log(x_t)))-h(q_t)\nonumber\\
        &=(\theta+L)\varphi^{*}(q_t)-h(q_t),
    \end{align}
    where $\tilde{s}_t=x_t^{\theta(L+\theta)}$, and the last equality follows from Lemma~\ref{lemma:FenchelSubgradient}, Lemma~\ref{lemma:ConjugateFunctionWellDefine}, and the fact that $q_t=\nabla\varphi(\theta\log(x_t))$.
    The proof then follows by the same argument as in the inequality~\eqref{eq:LinearMaxPrototypeSubgradient}.
\end{proof}

\begin{proofof}{Lemma~\ref{lemma:MGMEqConjugateMD}}
    The fact that $q_t\in\mathcal{C}$ is already established in the proof of Lemma~\ref{lemma:RelativeSmoothnessStrongConvexitySubgradient}.

    From Lemma~\ref{lemma:FenchelSubgradient}, we have $r_t\in\partial \varphi^{*}(q_t)$.
    
    On the other hand, by Lemma~\ref{lemma:RelativeSmoothnessStrongConvexitySubgradient}, we have
    \begin{align}
        G^f(q)-G^f(q_t)
        &\geq \inner{g_t}{q-q_t} + \mu  D_{\varphi^*}(q,q_t)\nonumber\\
        &\geq  \inner{g_t}{q-q_t},\quad\text{for all } q\in\mathcal{C}.
    \end{align}
    This implies that $g_t\in\partial G^f(q_t)$.

    It remains to show that, for all $t\geq 1$,
    \begin{align}
        q_{t+1}
        \in \argmin_{q\in\Delta_m}\eta\inner{\nabla G^f(q_t)}{q}+D_{\varphi^{*}}(q,q_t),
    \end{align}
    where $\nabla G^f(q_t)=g_t$ and $\nabla \varphi^{*}(q_t)=r_t$.
    This is equivalent to showing that $q_{t+1}$ is a minimizer of the function $\varPsi_t(q)=\inner{\eta g_t-r_t}{q}+\varphi^{*}(q)$ over $\Delta_m$.

    Since $\varPsi_t(q)-\varphi^{*}(q)$ is a linear function of $q$, we have $r_{t+1}+\eta g_t-r_t\in\partial \varPsi_t(q_{t+1})$.
    Furthermore, we have
    \begin{align}
        \label{eq:BregProxEqualityBackBone}
        r_{t+1}+\eta g_t-r_t
        &=c_f\theta\log(x_{t+1})-c_f\theta\log(x_t)\nonumber\\
        &\quad+\eta c_f(-\log(x_t)+\log(q_t)+\mathbf{1}_m)\nonumber\\
        &=c_f\theta\log\left(\frac{x_t\odot(\theta\nabla f(x_t))^{-\theta\eta}}{\norm{x_t\odot(\theta\nabla f(x_t))^{-\theta\eta}}_1}\right)-c_f\theta\log(x_t)\nonumber\\
        &\quad+\eta c_f(-\log(x_t)+\log(\theta x_t\odot\nabla f(x_t))+\mathbf{1}_m)\nonumber\\
        &=c_f\theta\left(-\log\left(\norm{x_t\odot(\theta\nabla f(x_t))^{-\theta\eta}}_1\right)+\eta\theta\right)\mathbf{1}_m.
    \end{align}

    Therefore, we have
    \begin{align}
        \varPsi_t(q)
        &\geq \varPsi_t(q_{t+1})+\inner{r_{t+1}+\eta g_t-r_t}{q-q_{t+1}}\nonumber\\
        &= \varPsi_t(q_{t+1})+\inner{c_f\theta\left(-\log\left(\norm{x_t\odot(\theta\nabla f(x_t))^{-\theta\eta}}_1\right)+\eta\theta\right)\mathbf{1}_m}{q-q_{t+1}}\nonumber\\
        &= \varPsi_t(q_{t+1}),\quad\text{for all } q\in\Delta_m.
    \end{align}
    This concludes the proof.
\end{proofof}

\subsection{Proof of Corollary~\ref{corollary:ConjugateMDConv}}
\label{appendix:ProofConjugateMDConv}
In the proof of Corollary~\ref{corollary:ConjugateMDConv}, instead of directly using the relative smoothness and strong convexity properties established in Lemma~\ref{lemma:RelativeSmoothnessStrongConvexity}, we will use the corresponding subgradient inequalities established in  Lemma~\ref{lemma:RelativeSmoothnessStrongConvexitySubgradient}.
\begin{proof}
    Let the subgradients $\nabla G^f(q_t)=g_t$ and $\nabla \varphi^{*}(q_t)=r_t$ be chosen as in Lemma~\ref{lemma:MGMEqConjugateMD}.

    We first establish below a version of the \emph{Bregman proximal inequality}
    from the analysis of \citet{Teboulle2018}, with gradients replaced by subgradients.
    In our setting, this inequality holds with equality:
    \begin{align}
        \label{eq:BregmanProximalInequalityMGMLemma}
        &D_{\varphi^{*}}(q,q_t)-D_{\varphi^{*}}(q,q_{t+1})-D_{\varphi^{*}}(q_{t+1},q_t)\nonumber\\
        &\quad=(\varphi^{*}(q)-\varphi^{*}(q_{t})-\inner{r_t}{q-q_{t}})\nonumber\\
        &\quad\quad-(\varphi^{*}(q)-\varphi^{*}(q_{t+1})-\inner{r_{t+1}}{q-q_{t+1}})\nonumber\\
        &\quad\quad-(\varphi^{*}(q_{t+1})-\varphi^{*}(q_{t})-\inner{r_{t}}{q_{t+1}-q_{t}})\nonumber\\
        &\quad=\inner{r_t-r_{t+1}}{q_{t+1}-q}\nonumber\\
        &\quad=\inner{c_f\theta\log(x_t)-c_f\theta\log(x_{t+1})}{q_{t+1}-q}\nonumber\\
        &\quad=\inner{c_f\theta\log(x_t)-c_f\theta\log\left(\frac{x_t\odot(\theta\nabla f(x_t))^{-\theta\eta}}{\norm{x_t\odot(\theta\nabla f(x_t))^{-\theta\eta}}_1}\right)}{q_{t+1}-q}\nonumber\\
        &\quad=\eta\inner{c_f\log(q_t)-c_f
        \log(x_t)+\frac{c_f\theta}{\eta}\log\left(\norm{x_t\odot(\theta\nabla f(x_t))^{-\theta\eta}}_1\right)\mathbf{1}_m}{q_{t+1}-q}\nonumber\\
        &\quad=\eta\inner{g_t}{q_{t+1}-q}\nonumber\\
        &\quad=\eta\inner{\nabla  G^{f}(q_t)}{q_{t+1}-q},\quad\text{for all } q\in\mathcal{C}.
    \end{align}

    On the other hand, by Lemma~\ref{lemma:RelativeSmoothnessStrongConvexitySubgradient}, we have $L\nabla \varphi^{*}(q_t)-\nabla G^f(q_t)\in\partial (L\varphi^{*}-G^f)(q_t)$ and $\nabla G^f(q_t)-\mu\nabla \varphi^{*}(q_t)\in\partial (G^f-\mu \varphi^{*})(q_t)$.
    This implies that
    \begin{align}
        G^f(q_{t+1})-G^f(q_t)
        \leq \inner{\nabla G^f(q_t)}{q_{t+1}-q_t}+LD_{\varphi^{*}}(q_{t+1},q_t),
    \end{align}
    and
    \begin{align}
        {G^f}( q)-{G^f}(q_t)
        \geq \inner{\nabla G^f(q_t)}{q-q_t}+\mu D_{\varphi^{*}}(q,q_t),\quad\text{for all } q\in\mathcal{C}.
    \end{align}
    Combining the above two inequalities with the equality~\eqref{eq:BregmanProximalInequalityMGMLemma}, we obtain
    \begin{align}
        \label{ineq:DecentLemma}
        G^f(q_{t+1})-G^f(q)
        &\leq (L-\mu)D_{\varphi^{*}}(q,q_t)-LD_{\varphi^{*}}(q,q_{t+1}),\quad\text{for all } q\in\mathcal{C}.
    \end{align}

    For the linear rate $O((1-\mu/L)^T)$, by rearranging the inequality~\eqref{ineq:DecentLemma}, we obtain
    \begin{align}
        D_{\varphi^{*}}(q_{\star},q_{t+1})
        \leq \frac{L-\mu}{L}D_{\varphi^{*}}(q_{\star},q_t)+\frac{G^f(q_{\star})-G^f(q_{t+1})}{L}
        \leq  \frac{L-\mu}{L}D_{\varphi^{*}}(q_{\star},q_t).
    \end{align}
    This implies that $D_{\varphi^{*}}(q_{\star},q_{T+1})$ decreases at a rate of $O((1-\mu/L)^T)$.
    Again, from the inequality~\eqref{ineq:DecentLemma}, we obtain
    \begin{align}
        \label{ineq:ConjugateLinearRateProof}
        G^f(q_{T+1})-G^f(q_{\star})+LD_{\varphi^{*}}(q_{\star},q_{T+1})
        &\leq (L-\mu)D_{\varphi^{*}}(q_{\star},q_T)\nonumber\\
        &\leq \left(1-\frac{\mu}{L}\right)^{T-1}(L-\mu)D_{\varphi^{*}}(q_{\star},q_1)\nonumber\\
        &=L\left(1-\frac{\mu}{L}\right)^{T}D_{\varphi^{*}}(q_{\star},q_1).
    \end{align}

    For the sublinear rate $O(1/T)$, by taking $q=q_{t}$ in the inequality~\eqref{ineq:DecentLemma}, we obtain that $G^f(q_t)$ is non-increasing in $t$.
    By telescoping the inequality~\eqref{ineq:DecentLemma} over $t$, we obtain
    \begin{align}
        \sum_{t=1}^{T}\left(G^f(q_{t+1})-G^f(q_{\star})\right)
        &\leq \sum_{t=1}^{T} \left( (L-\mu)D_{\varphi^{*}}(q_{\star},q_t)-LD_{\varphi^{*}}(q_{\star},q_{t+1}) \right)\nonumber\\
        &\leq \sum_{t=1}^{T} \left( LD_{\varphi^{*}}(q_{\star},q_t)-LD_{\varphi^{*}}(q_{\star},q_{t+1}) \right)\nonumber\\
        &= LD_{\varphi^{*}}(q_{\star},q_1)-LD_{\varphi^{*}}(q_{\star},q_{T+1})\nonumber\\
        &\leq LD_{\varphi^{*}}(q_{\star},q_1).
    \end{align}
    Dividing both sides by $T$ and using the fact that $G^f(q_{t})$ is non-increasing in $t$, we obtain
    \begin{align}
        \label{ineq:ConjugateSublinearRateProof}
        G^f(q_{T+1})-G^f(q_{\star})
        &\leq \frac{LD_{\varphi^{*}}(q_{\star},q_1)}{T}.
    \end{align}
    This concludes the proof.
\end{proof}

\subsection{Proof of Lemma~\ref{lemma:EquivalenceOpt}}
\label{appendix:ProofEquivalenceOpt}

\begin{proof}
    Let $x_{\star}$ be any optimal solution of the optimization problem~\eqref{def:GeneralProblem}.

    We first consider the case $\theta=-1$ and $0\leq\mu\leq L\leq 1$.

    Assume that
    \begin{align}
        \label{ineq:Case1PhiCvxFirstOrderOpt}
        f(x)
        \leq f(x_{\star})+\RenyiKL{x_{\star}}{x},\quad\text{for all } x\in\mathrm{ri}(\Delta_m).
    \end{align}
    Then, we have
    \begin{align}
        G^{f}(x_{\star})
        &=\sup_{y\in\mathbb{R}^m}\inner{x_{\star}}{y}+f(\exp(y))-h(x_{\star})\nonumber\\
        &=\sup_{x\in\mathbb{R}_{++}^m}\inner{x_{\star}}{\log(x)}+f(x)-h(x_{\star})\nonumber\\
        &=\sup_{x\in\mathrm{ri}(\Delta_m)}f(x)-\RenyiKL{x_{\star}}{x}\nonumber\\
        &\leq\sup_{x\in\mathrm{ri}(\Delta_m)}f(x_{\star})+\RenyiKL{x_{\star}}{x}-\RenyiKL{x_{\star}}{x}\nonumber\\
        &=f(x_{\star}),
    \end{align}
    where $h(x)=\inner{x}{\log(x)}$, and the third equality follows from the log-homogeneity of $f$ and the fact that $x_{\star}\in\Delta_m$.
    On the other hand, we also have
    \begin{align}
        \label{ineq:GqGeqfqCase1}
        G^{f}(q)
        &=\sup_{x\in\mathrm{ri}(\Delta_m)}f(x)-\RenyiKL{q}{x}\nonumber\\
        &\geq f(q)-\RenyiKL{q}{q}\nonumber\\
        &=f(q)\nonumber\\
        &\geq f(x_{\star}),\quad\text{for all } q\in\Delta_m.
    \end{align}
    This implies that $x_{\star}$ is also a minimizer of $G^f$ over $\Delta_m$ and
    \begin{align}
        \label{eq:OptEqCase1}
        G^{f}(x_{\star})
        =f(x_{\star}),
    \end{align}
    provided that the inequality~\eqref{ineq:Case1PhiCvxFirstOrderOpt} holds.

    To prove the inequality~\eqref{ineq:Case1PhiCvxFirstOrderOpt}, from the convexity of $\varphi(y)$ established in Lemma~\ref{lemma:ConjugateFunctionWellDefine}, we have
    \begin{align}
        \label{ineq:Case1PhiCvxFirstOrder}
        f(y)
        &=-\varphi(\log(y))\nonumber\\
        &\leq -\varphi(\log(x))+\inner{-\nabla \varphi(\log(x))}{\log(y)-\log(x)}\nonumber\\
        &=f(x)+\inner{-x\odot \nabla f(x)}{\log(x)-\log(y)}\nonumber\\
        &=f(x)+\RenyiKL{-x\odot \nabla f(x)}{y}-\RenyiKL{-x\odot \nabla f(x)}{x},\quad\text{for all } x,y\in\mathrm{ri}(\Delta_m).
    \end{align}

    To address the fact that $x_{\star}$ may not lie in $\mathbb{R}_{++}^m$, we define a sequence $\Set{x_{\star,k}}_{k=1}^{\infty}\subset\mathrm{ri}(\Delta_m)$ such that $x_{\star,k}=(1-1/k)x_{\star}+\mathbf{1}_m/(mk)$, where we have $\lim_{k\to\infty}x_{\star,k}=x_{\star}$.
    Then, from the inequality~\eqref{ineq:Case1PhiCvxFirstOrder}, we have
    \begin{align}
        0
        &\leq \RenyiKL{-x_{\star,k}\odot\nabla f(x_{\star,k})}{x_{\star,k}}\nonumber\\
        &\leq f(x_{\star,k})-f(-x_{\star,k}\odot\nabla f(x_{\star,k}))\nonumber\\
        &\leq f(x_{\star,k})-f(x_{\star}),\nonumber
    \end{align}
    where the last inequality follows from the optimality of $x_{\star}$ and the fact that $-x_{\star,k}\odot\nabla f(x_{\star,k})\in\Delta_m$ due to Lemma~\ref{lemma:SelfNormalize}.
    Since $\lim_{k\to\infty}f(x_{\star,k})-f(x_{\star})=0$ by the continuity of $f$, we have $\lim_{k\to\infty}\RenyiKL{-x_{\star,k}\odot\nabla f(x_{\star,k})}{x_{\star,k}}=0$.
    This implies that 
    \begin{align}
        \lim_{k\to\infty}-x_{\star,k}\odot\nabla f(x_{\star,k})
        =x_{\star}.
    \end{align}
    Applying the inequality~\eqref{ineq:Case1PhiCvxFirstOrder} again, we obtain
    \begin{align}
        f(x)
        \leq f(x_{\star,k})+\RenyiKL{-x_{\star,k}\odot\nabla f(x_{\star,k})}{x}-\RenyiKL{-x_{\star,k}\odot\nabla f(x_{\star,k})}{x_{\star,k}},
    \end{align}
    for all $x\in\mathrm{ri}(\Delta_m)$.
    The inequality~\eqref{ineq:Case1PhiCvxFirstOrderOpt} then follows by taking the limit $k\to\infty$ in the above inequality.

    Next, we consider the case $\theta=-1$, $\mu\geq 1$, and $L>1$.

    Assume that
    \begin{align}
        \label{ineq:Case2PhiCvxFirstOrderOpt}
        f(x)
        \geq f(x_{\star})+\RenyiKL{x_{\star}}{x},\quad\text{for all } x\in\mathrm{ri}(\Delta_m).
    \end{align}
    Then, we have
    \begin{align}
        G^f(x_{\star})
        &=\sup_{y\in\mathbb{R}^m}h(x_{\star})+\inner{x_{\star}}{y}-f(\exp(-y))\nonumber\\
        &=\sup_{x\in\mathrm{ri}(\Delta_m)}\RenyiKL{x_{\star}}{x}-f(x)\nonumber\\
        &\leq\sup_{x\in\mathrm{ri}(\Delta_m)}\RenyiKL{x_{\star}}{x}-f(x_{\star})-\RenyiKL{x_{\star}}{x}\nonumber\\
        &=-f(x_{\star}).
    \end{align}
    On the other hand, we also have
    \begin{align}
        G^f(q)
        &=\sup_{x\in\mathrm{ri}(\Delta_m)}\RenyiKL{q}{x}-f(x)\nonumber\\
        &\geq \RenyiKL{q}{x_{\star}}-f(x_{\star})\nonumber\\
        &\geq-f(x_{\star}),\quad\text{for all } q\in\Delta_m.\nonumber
    \end{align}
    This implies that $x_{\star}$ is also a minimizer of $G^f$ over $\Delta_m$, and
    \begin{align}
        \label{eq:OptEqCase2}
        G^f(x_{\star})
        =-f(x_{\star}),
    \end{align}
    provided that the inequality~\eqref{ineq:Case2PhiCvxFirstOrderOpt} holds.

    To prove the inequality~\eqref{ineq:Case2PhiCvxFirstOrderOpt}, from the convexity of $\varphi(y)$ and an argument similar to that for the inequality~\eqref{ineq:Case1PhiCvxFirstOrder}, we obtain that
    \begin{align}
        \label{ineq:Case2PhiCvxFirstOrder}
        f(y)
        &\geq f(x)+\RenyiKL{-x\odot \nabla f(x)}{y}-\RenyiKL{-x\odot \nabla f(x)}{x},\quad\text{for all } x,y\in\mathrm{ri}(\Delta_m).
    \end{align}

    By taking $x=\mathbf{1}_m/m$ in the inequality~\eqref{ineq:Case2PhiCvxFirstOrder}, we obtain
    \begin{align}
        f(y)
        &\geq f(\mathbf{1}_m/m)+\inner{-\frac{1}{m}\nabla f\left(\frac{\mathbf{1}_m}{m}\right)}{-\log(y)}+\text{constant},\quad\text{for all } y\in\mathrm{ri}(\Delta_m).
    \end{align}
    From the assumption that $-\nabla f(x)\in\mathbb{R}_{++}^m$ and that $f$ is continuous on $\mathbb{R}_{+}^m\backslash\{0\}$, we obtain that $f(y)=\infty$ for all $y\in\Delta_m\setminus\mathrm{ri}(\Delta_m)$.
    This implies that $x_{\star}\in\mathrm{ri}(\Delta_m)$, provided that $\mu\geq 1$ and $L>1$.

    Therefore, by the optimality condition for $x_{\star}$, we have $\nabla f(x_{\star})=\lambda \mathbf{1}_m$ for some scalar $\lambda$.
    Consequently, from Lemma~\ref{lemma:SelfNormalize}, we have $x_{\star}\odot (-\nabla f(x_{\star})) = x_{\star}$.
    This equality, combined with the inequality~\eqref{ineq:Case2PhiCvxFirstOrder}, yields the inequality~\eqref{ineq:Case2PhiCvxFirstOrderOpt}.

    For the case $\theta=1$, we write
    \begin{align}
        G^f(q)
        &=\inf_{y\in\mathbb{R}^m}h(q)-\inner{q}{y}+f(\exp(y))\nonumber\\
        &=\inf_{x\in\mathbb{R}_{++}^m}\inner{q}{\log(q)-\log(x)}+f(x)\nonumber\\
        &=\inf_{x\in\mathrm{ri}(\Delta_m)}\RenyiKL{q}{x}+f(x)\nonumber\\
        &\geq\inf_{x\in\mathrm{ri}(\Delta_m)}0+f(x_{\star})\nonumber\\
        &= f(x_{\star}),\quad\text{for all } q\in\Delta_m.\nonumber
    \end{align}
    On the other hand, we also have
    \begin{align}
        G^f(x_{\star})
        &=\inf_{x\in\mathrm{ri}(\Delta_m)}\RenyiKL{x_{\star}}{x}+f(x)\nonumber\\
        &\leq \RenyiKL{x_{\star}}{x_{\star,k}}+f(x_{\star,k}).\nonumber
    \end{align}
    Taking the limit as $k\to\infty$ gives $G^f(x_{\star})\leq f(x_{\star})$, and thus $x_{\star}$ is also a minimizer of $G^f$ over $\Delta_m$, and
    \begin{align}
        \label{eq:OptEqCase3}
        G^f(x_{\star})
        =f(x_{\star}).
    \end{align}
    
    Finally, from the equalities \eqref{eq:OptEqCase1}, \eqref{eq:OptEqCase2}, and \eqref{eq:OptEqCase3}, we have $-\infty<G^f(x_{\star})=c_f\theta f(x_{\star})<\infty$.
    This implies that $\varphi^{*}(x_{\star})\leq \abs{G^f(x_{\star})}+\abs{\inner{x_{\star}}{\log(x_{\star})}}<\infty$.
    This concludes the proof.
\end{proof}

\subsection{Proof of Theorem~\ref{theorem:ConvergenceRateMGM}}
\label{appendix:ProofConvergenceRateMGM}
The proof of Theorem~\ref{theorem:ConvergenceRateMGM} is based on Lemma~\ref{lemma:FullDomainForThetaMinusOne}.

\begin{lemma}
    \label{lemma:FullDomainForThetaMinusOne}
    Let the function $f$ be defined as in Theorem~\ref{theorem:ConvergenceRateMGM}.
    Then, we have $\mathcal{C}=\Delta_m$ for the case $\theta=-1$ and $L<1$, as well as the case $\theta=-1$ and $\mu>1$, where $\mathcal{C}$ is defined in Lemma~\ref{lemma:RelativeSmoothnessStrongConvexity}.
\end{lemma}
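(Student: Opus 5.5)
The plan is to show directly that $\varphi^{*}(q)<\infty$ for every $q\in\Delta_m$. Together with $\mathcal{C}=\dom(\varphi^{*})\cap\Delta_m\subseteq\Delta_m$, this gives $\mathcal{C}=\Delta_m$. The tool is the supporting-hyperplane form of the concavity/convexity of $U^{f}_{\theta,c}$, namely Lemma~\ref{lemma:ZeroOrderCharacterization} combined with Lemma~\ref{lemma:USupLinear}. It produces a uniform bound of the type $f(x)\lessgtr \frac{1}{c+\theta}\log\inner{a}{x^{\cdot}}$ with $a\in\mathbb{R}^m_{++}$. This bound is then played off against the linear term $\inner{q}{\log x}$ in the conjugate.

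\emph{Case $\theta=-1$, $L<1$.} Here $c_f=-1$, so $\varphi(y)=-f(\exp(y))$. Substituting $x=\exp(y)$ gives
\[
\varphi^{*}(q)=\sup_{x\in\mathbb{R}^m_{++}}\inner{q}{\log x}+f(x).
\]
Fix any $z\in\mathbb{R}^m_{++}$. By concavity of $U^{f}_{-1,L}$ and Lemma~\ref{lemma:USupLinear}, $U^{f}_{-1,L}(w)\le\inner{\nabla U^{f}_{-1,L}(z)}{w}$. Multiplying by $L-1<0$ gives $(L-1)U^{f}_{-1,L}(w)\ge\inner{a}{w}$ with $a\coloneqq(L-1)\nabla U^{f}_{-1,L}(z)$.

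I will check that $a\in\mathbb{R}^m_{++}$. From the formula for $\nabla U$ in the proof of Lemma~\ref{lemma:USupLinear}, $a$ equals a positive exponential times $\theta\nabla f(\cdot)\odot z^{(\cdot)}$, and $\theta\nabla f>0$ by Assumption~\ref{assumption:BasicAssumptions}.

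Next set $w=x^{1-L}$. Since $(L-1)U^{f}_{-1,L}(w)=\exp((L-1)f(x))$, this yields
\[
f(x)\le -\tfrac{1}{1-L}\log\inner{a}{x^{1-L}}.
\]
Using $\inner{a}{x^{1-L}}\ge a_{\min}\max_j x[j]^{1-L}$ and $\inner{q}{\log x}\le\log\max_j x[j]$ for $q\in\Delta_m$, the two $\log\max_j x[j]$ terms cancel. Hence $\varphi^{*}(q)\le-\log(a_{\min})/(1-L)<\infty$, uniformly in $q\in\Delta_m$.

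\emph{Case $\theta=-1$, $\mu>1$.} Here $c_f=1$, so $\varphi(y)=f(\exp(-y))$ and
\[
\varphi^{*}(q)=\sup_{x\in\mathbb{R}^m_{++}}-\inner{q}{\log x}-f(x).
\]
Now $U^{f}_{-1,\mu}$ is convex, with $\mu-1>0$. Lemma~\ref{lemma:USupLinear} gives $(\mu-1)U^{f}_{-1,\mu}(w)\ge\inner{b}{w}$ with $b\coloneqq(\mu-1)\nabla U^{f}_{-1,\mu}(z)\in\mathbb{R}^m_{++}$, by the same sign check. With $w=x^{1-\mu}$ this becomes
\[
-f(x)\le-\tfrac{1}{\mu-1}\log\inner{b}{x^{1-\mu}}.
\]
Bounding $\inner{b}{x^{1-\mu}}\ge b_{\min}(\min_j x[j])^{1-\mu}$ turns the right-hand side into at most $-\log(b_{\min})/(\mu-1)+\log\min_j x[j]$. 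On the other side, $-\inner{q}{\log x}\le-\log\min_j x[j]$. Adding the two bounds, the $\log\min_j x[j]$ terms cancel, so $\varphi^{*}(q)\le-\log(b_{\min})/(\mu-1)<\infty$.

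The main point of care is the sign bookkeeping. I need the correct direction of the supporting-hyperplane inequality (concave versus convex) after multiplying by $c+\theta$, the positivity of $a$ and $b$ (which is exactly where $\theta\nabla f>0$ enters), and the exponent in the substitution $w=x^{\theta(c+\theta)}$, which must match the definition $U^{f}_{\theta,c}(y)=\exp((c+\theta)f(y^{\theta/(c+\theta)}))/(c+\theta)$. Once these are checked, the cancellation between the logarithmic growth of the linear term and the bound on $f$ is elementary. This cancellation is what makes the argument fail in the excluded cases $L=1$ or $\mu=1$, where the corresponding exponent degenerates.
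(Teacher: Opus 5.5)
Your proposal is correct and follows essentially the same route as the paper. Both arguments use the same change of variables and the same supporting-hyperplane bound $(c-1)U^{f}_{-1,c}(w)\geq\langle (c-1)\nabla U^{f}_{-1,c}(z), w\rangle$ for $c\in\{L,\mu\}$, whose coefficients are positive because $\theta\nabla f>0$; the paper fixes $z=\mathbf{1}_m$.

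The only difference is how the last supremum is handled. The paper evaluates it exactly by the Gibbs variational identity, which gives $\langle q,\log(q)-\log(a)\rangle/(1-L)$, and analogously for $\mu>1$. Your max/min-entry estimate instead gives the cruder, $q$-uniform bound $-\log(a_{\min})/(1-L)$, which is enough to show finiteness.
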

\begin{proof}
    For $L<1$,
    we have
    \begin{align}
        \varphi^{*}(q)
        &=\sup_{y\in\mathbb{R}^m}\inner{q}{y}+f(\exp(y))\nonumber\\
        &=\sup_{z\in\mathbb{R}_{++}^m}\frac{\inner{q}{\log(z)}-\log((L-1)U_{-1,L}^f(z))}{1-L}\nonumber\\
        &\leq\sup_{z\in\mathbb{R}_{++}^m}\frac{\inner{q}{\log(z)}-\log(\inner{(L-1)\nabla U_{-1,L}^f(\mathbf{1}_m)}{z})}{1-L}\nonumber\\
        &=\frac{\inner{q}{\log(q)-\log((L-1)\nabla U_{-1,L}^f(\mathbf{1}_m))}}{1-L}\nonumber\\
        &<\infty,\quad\text{for all } q\in\Delta_m,\nonumber
    \end{align}
    where the first inequality follows from the convexity of $(L-1)U_{-1,L}^f$ established in Lemma~\ref{lemma:ZeroOrderCharacterization} and the fact that 
    \begin{align}
        (L-1)U_{-1,L}^f(z)
        &\geq (L-1)U_{-1,L}^f(\mathbf{1}_m)+\inner{(L-1)\nabla U_{-1,L}^f(\mathbf{1}_m)}{z-\mathbf{1}_m}\nonumber\\
        &= \inner{(L-1)\nabla U_{-1,L}^f(\mathbf{1}_m)}{z},\nonumber
    \end{align}
    and the last inequality follows from the assumption that $\theta\nabla f(x)>0$ for all $x\in\mathbb{R}_{++}^m$.

    Similarly, for $\mu>1$, we have
    \begin{align}
        \varphi^{*}(q)
        &=\sup_{y\in\mathbb{R}^m}\inner{q}{y}-f(\exp(-y))\nonumber\\
        &=\sup_{z\in\mathbb{R}_{++}^m}\frac{\inner{q}{\log(z)}-\log((\mu-1)U_{-1,\mu}^f(z))}{\mu-1}\nonumber\\
        &\leq\sup_{z\in\mathbb{R}_{++}^m}\frac{\inner{q}{\log(z)}-\log(\inner{(\mu-1)\nabla U_{-1,\mu}^f(\mathbf{1}_m)}{z})}{\mu-1}\nonumber\\
        &=\frac{\inner{q}{\log(q)-\log((\mu-1)\nabla U_{-1,\mu}^f(\mathbf{1}_m))}}{\mu-1}\nonumber\\
        &<\infty,\quad\text{for all } q\in\Delta_m.\nonumber
    \end{align}
    This concludes the proof.
\end{proof}

\begin{proofof}{Theorem~\ref{theorem:ConvergenceRateMGM}}

    For the case $\theta=-1$ and $L\leq 1$, we will use the bounds on $G^f(q)-G^f(q_t)$ with $q=x_{t+1}$ 
    obtained from the property $Lr_t-g_t\in\partial (L\varphi^*-G^f)(q_t)$ established in Lemma~\ref{lemma:RelativeSmoothnessStrongConvexitySubgradient}, for which we have to first verify that $x_{t+1}\in\mathcal{C}$. 
    The claim for $L<1$ is already established in Lemma~\ref{lemma:FullDomainForThetaMinusOne}.
    For $L=1$, we have $x_{t+1}=q_t=\nabla \varphi(\log(x_t))$, which, by Lemma~\ref{lemma:FenchelSubgradient}, implies that $x_{t+1}\in\mathcal{C}$ and $\varphi^{*}(x_{t+1})=\varphi^{*}(q_t)=\inner{q_t}{\log(x_t)}+f(x_t)$.

    By applying the subgradient inequality for $Lr_t-g_t\in\partial (L\varphi^*-G^f)(q_t)$ from Lemma~\ref{lemma:RelativeSmoothnessStrongConvexitySubgradient} at $q=x_{t+1}$, we have
    \begin{align}
        &G^f(x_{t+1})-G^f(q_t)\nonumber\\
        &\quad=G^f(x_{t+1})-(\varphi^{*}(q_t)-\inner{q_t}{\log(q_t)})\nonumber\\
        &\quad=G^f(x_{t+1})-(\inner{q_t}{\log(x_t)}+f(x_t)-\inner{q_t}{\log(q_t)})\nonumber\\
        &\quad\leq L D_{\varphi^{*}}(x_{t+1},q_t)+\inner{g_t}{x_{t+1}-q_t}\nonumber\\
        &\quad=L(\varphi^{*}(x_{t+1})-\varphi^{*}(q_t)-\inner{\log(x_t)}{x_{t+1}-q_t})\nonumber\\
        &\quad\quad+\inner{\log(x_t)-\log(q_t)-\mathbf{1}_m}{x_{t+1}-q_t}\nonumber\\
        &\quad=L(G^f(x_{t+1})+\inner{x_{t+1}}{\log(x_{t+1})}-\inner{q_t}{\log(x_t)}-f(x_t)-\inner{\log(x_t)}{x_{t+1}-q_t})\nonumber\\
        &\quad\quad+\inner{\log(x_t)-\log(q_t)}{x_{t+1}-q_t},
    \end{align}
    where the second and last equalities follow from Lemma~\ref{lemma:FenchelSubgradient}.
    By rearranging the above inequality, we obtain
    \begin{align}
        (1-L)G^f(x_{t+1})
        \leq (1-L)f(x_t)+\RenyiKL{x_{t+1}}{q_t}-(1-L)\RenyiKL{x_{t+1}}{x_t}.
    \end{align}
    Furthermore, by the inequality~\eqref{ineq:GqGeqfqCase1}, we have
    \begin{align}
        &(1-L)f(x_{t+1})\nonumber\\
        &\quad\leq (1-L)G^f(x_{t+1})\nonumber\\
        &\quad\leq  (1-L)f(x_t)+\RenyiKL{x_{t+1}}{q_t}-(1-L)\RenyiKL{x_{t+1}}{x_t}\nonumber\\
        &\quad =(1-L)f(x_t)+\inner{x_{t+1}}{\log\left((-\nabla f(x_t))^{1/L-1}\right)-\log\left(\norm{x_t\odot(-\nabla f(x_t))^{1/L}}_1\right)\mathbf{1}_m}\nonumber\\
        &\quad\quad-(1-L)\inner{x_{t+1}}{\log\left((-\nabla f(x_t))^{1/L}\right)-\log\left(\norm{x_t\odot(-\nabla f(x_t))^{1/L}}_1\right)\mathbf{1}_m}\nonumber\\
        &\quad =(1-L)f(x_t)-\log\left(\norm{x_t\odot(-\nabla f(x_t))^{1/L}}_1^{L}\right)\label{ineq:IterateNormBoundCase1}\\
        &\quad \leq (1-L)f(x_t)-\log\left(\frac{\norm{x_t\odot(-\nabla f(x_t))}_1}{\norm{x_t}_1^{1-L}}\right)\nonumber\\
        &\quad= (1-L)f(x_t),
    \end{align}
    where the first inequality follows from the inequality~\eqref{ineq:GqGeqfqCase1}, and the last inequality follows from H\"{o}lder's inequality.
    This implies that $f(x_t)$ is non-increasing for the case $\theta=-1$ and $L< 1$.
    In addition, the monotonicity of $f(x_t)$ for the case $\theta=-1$ and $L=1$ can also be verified by writing $f(x_{t+1})=f(q_t)\leq G^f(q_t)=f(x_t)-\RenyiKL{q_t}{x_t}\leq f(x_t)$.

    By the lower bound on $G^f(q)-G^f(q_t)$ with $q=x_{\star}$ obtained from the property $g_t-\mu r_t\in\partial (G^f-\mu \varphi^*)(q_t)$ in Lemma~\ref{lemma:RelativeSmoothnessStrongConvexitySubgradient}, we have
    \begin{align}
        &G^f(x_{\star})-G^f(q_t)\nonumber\\
        &\quad=f(x_{\star})-G^f(q_t)\nonumber\\
        &\quad=f(x_{\star})-(\varphi^{*}(q_t)-\inner{q_t}{\log(q_t)})\nonumber\\
        &\quad=f(x_{\star})-(\inner{q_t}{\log(x_t)}+f(x_t)-\inner{q_t}{\log(q_t)})\nonumber\\
        &\quad\geq \mu D_{\varphi^{*}}(x_{\star},q_t)+\inner{g_t}{x_{\star}-q_t}\nonumber\\
        &\quad=\mu(\varphi^{*}(x_{\star})-\varphi^{*}(q_t)-\inner{\log(x_t)}{x_{\star}-q_t})+\inner{g_t}{x_{\star}-q_t}\nonumber\\
        &\quad=\mu(G^f(x_{\star})+\inner{x_{\star}}{\log(x_{\star})}-\inner{q_t}{\log(x_t)}-f(x_t)-\inner{\log(x_t)}{x_{\star}-q_t})\nonumber\\
        &\quad\quad+\inner{\log(x_t)-\log(q_t)}{x_{\star}-q_t}\nonumber\\
        &\quad=\mu(f(x_{\star})+\inner{x_{\star}}{\log(x_{\star})}-f(x_t)-\inner{\log(x_t)}{x_{\star}})\nonumber\\
        &\quad\quad+\inner{\log(x_t)-\log(q_t)}{x_{\star}-q_t},\nonumber
    \end{align}
    where the first equality follows from the equality~\eqref{eq:OptEqCase1}.
    By rearranging the above inequality, we obtain
    \begin{align}
        &(1-\mu)f(x_{\star})+(1-\mu)\RenyiKL{x_{\star}}{x_t}\nonumber\\
        &\quad\geq (1-\mu)f(x_t)+\RenyiKL{x_{\star}}{q_t}\nonumber\\
        &\quad =(1-\mu)f(x_t)+\inner{x_{\star}}{\log(x_{\star})-\log(x_{t}\odot(-\nabla f(x_t)))}\nonumber\\
        &\quad =(1-\mu)f(x_t)+\inner{x_{\star}}{\log(x_{\star})-L\log(x_{t}\odot(-\nabla f(x_t))^{1/L})-(1-L)\log(x_t)}\nonumber\\
        &\quad =(1-\mu)f(x_t)+\inner{x_{\star}}{\log(x_{\star})-L\log(x_{t+1})-(1-L)\log(x_t)}\nonumber\\
        &\quad\quad -L\log\left(\norm{x_t\odot(-\nabla f(x_t))^{1/L}}_1\right)\nonumber\\
        &\quad=(1-\mu)f(x_t)+L\RenyiKL{x_{\star}}{x_{t+1}}+(1-L)\RenyiKL{x_{\star}}{x_t}\nonumber\\
        &\quad\quad -L\log\left(\norm{x_t\odot(-\nabla f(x_t))^{1/L}}_1\right).
    \end{align}
    Combining the above inequality with the inequality~\eqref{ineq:IterateNormBoundCase1} yields
    \begin{align}
        &\frac{1-\mu}{L}(f(x_t)-f(x_{\star}))+\RenyiKL{x_{\star}}{x_{t+1}}\nonumber\\
        &\quad\leq \left(1-\frac{\mu}{L}\right)\RenyiKL{x_{\star}}{x_{t}}+\log\left(\norm{x_t\odot(-\nabla f(x_t))^{1/L}}_1\right)\nonumber\\
        &\quad\leq \left(1-\frac{\mu}{L}\right)\RenyiKL{x_{\star}}{x_{t}}+\left(\frac{1}{L}-1\right)(f(x_{t})-f(x_{t+1})),
    \end{align}
    which is equivalent to the inequality
    \begin{align}
        \label{ineq:RenyiKLInductionCase1}
        &\frac{1}{L}(f(x_{t+1})-f(x_{\star}))+(\RenyiKL{x_{\star}}{x_{t+1}}-(f(x_{t+1})-f(x_{\star})))\nonumber\\
        &\quad \leq \left(1-\frac{\mu}{L}\right)(\RenyiKL{x_{\star}}{x_{t}}-(f(x_{t})-f(x_{\star}))).
    \end{align}
    By induction, we obtain
    \begin{align}
        &\frac{1}{L}(f(x_{T+1})-f(x_{\star}))\nonumber\\
        &\quad\leq \frac{1}{L}(f(x_{T+1})-f(x_{\star}))+D_{\varphi^{*}}(x_{\star},q_{T+1})\nonumber\\
        &\quad=\frac{1}{L}(f(x_{T+1})-f(x_{\star}))+(\RenyiKL{x_{\star}}{x_{T+1}}-(f(x_{T+1})-f(x_{\star})))\nonumber\\
        &\quad \leq \left(1-\frac{\mu}{L}\right)^{T}(\RenyiKL{x_{\star}}{x_{1}}-(f(x_{1})-f(x_{\star})))\nonumber\\
        &\quad = \left(1-\frac{\mu}{L}\right)^{T}D_{\varphi^{*}}(x_{\star},q_{1}).\nonumber
    \end{align}
    Therefore, we obtain that $(1-\mu/L)^T\RenyiKL{x_{\star}}{x_{1}}$ is an upper bound on both $(f(x_{T+1})-f(x_{\star}))/L$ and $\RenyiKL{x_{\star}}{x_{T+1}}$.

    Also, by telescoping the inequality~\eqref{ineq:RenyiKLInductionCase1}, we have
    \begin{align}
        &f(x_{T+1})-f(x_{\star})\nonumber\\
        &\quad\leq \frac{1}{T}\sum_{t=1}^{T} (f(x_{t+1})-f(x_{\star}))\nonumber\\
        &\quad\leq \frac{1}{T}\sum_{t=1}^{T} \left((L-\mu)D_{\varphi^{*}}(x_{\star},q_{t})-LD_{\varphi^{*}}(x_{\star},q_{t+1})\right)\nonumber\\
        &\quad\leq \frac{1}{T}\sum_{t=1}^{T} \left(LD_{\varphi^{*}}(x_{\star},q_{t})-LD_{\varphi^{*}}(x_{\star},q_{t+1})\right)\nonumber\\
        &\quad\leq \frac{1}{T}\left(LD_{\varphi^{*}}(x_{\star},q_{1})-LD_{\varphi^{*}}(x_{\star},q_{T+1})\right)\nonumber\\
        &\quad\leq \frac{1}{T}LD_{\varphi^{*}}(x_{\star},q_{1})\nonumber\\
        &\quad\leq \frac{1}{T}\RenyiKL{x_{\star}}{x_{1}}.\nonumber
    \end{align}
    This concludes the proof of Theorem~\ref{theorem:ConvergenceRateMGM} for the case $\theta=-1$ and $L\leq 1$.

    For the case $\theta=-1$, $1\leq \mu\leq L$ and $L>1$, from the upper bound on $G^f(q)-G^f(q_{t})$ established in Lemma~\ref{lemma:RelativeSmoothnessStrongConvexitySubgradient} with $q=q_{t+1}$, we have
    \begin{align}
        &G^f(q_{t+1})-\RenyiKL{q_{t}}{x_{t}} +f(x_{t})\nonumber\\
        &\quad=G^f(q_{t+1})-\inner{q_{t}}{-\log(x_{t})}+\varphi(-\log(x_t))-\inner{q_t}{\log(q_t)}\nonumber\\
        &\quad=G^f(q_{t+1})-\varphi^{*}(q_t)-\inner{q_t}{\log(q_t)}\nonumber\\
        &\quad= G^f(q_{t+1})-G^f(q_{t})\nonumber\\
        &\quad\leq LD_{\varphi^{*}}(q_{t+1},q_t)+\inner{g_t}{q_{t+1}-q_t}\nonumber\\
        &\quad=L(\varphi^{*}(q_{t+1})-\varphi^{*}(q_t)-\inner{-\log(x_{t})}{q_{t+1}-q_t})+\inner{-\log(x_t)+\log(q_t)}{q_{t+1}-q_t}\nonumber\\
        &\quad=L(\inner{q_{t+1}}{-\log(x_{t+1})}-\varphi(-\log(x_{t+1})))-L(\inner{q_{t}}{-\log(x_{t})}-\varphi(-\log(x_t)))\nonumber\\
        &\quad\quad-L\inner{-\log(x_{t})}{q_{t+1}-q_t}+\inner{-\log(x_t)+\log(q_t)}{q_{t+1}-q_t}\nonumber\\
        &\quad=Lf(x_t)-Lf(x_{t+1})-\RenyiKL{q_t}{x_t}\nonumber\\
        &\quad\quad+L\inner{q_{t+1}}{\log(x_t)-\log(x_{t+1})}+\inner{-\log(x_t)+\log(q_t)}{q_{t+1}}\nonumber\\
        &\quad=Lf(x_t)-Lf(x_{t+1})-\RenyiKL{q_t}{x_t}\nonumber\\
        &\quad\quad+L\inner{q_{t+1}}{\log(x_t)-\log\left(\frac{x_t\odot\left(-\nabla f(x_t)\right)^{1/L}}{\norm{x_t\odot\left(-\nabla f(x_t)\right)^{1/L}}_1}\right)}+\inner{-\log(x_t)+\log(q_t)}{q_{t+1}}\nonumber\\
        &\quad=Lf(x_t)-Lf(x_{t+1})-\RenyiKL{q_t}{x_t}+L\log\left(\norm{x_t\odot\left(-\nabla f(x_t)\right)^{1/L}}_1\right),\nonumber
    \end{align}
    where the second and fifth equalities follow from Lemma~\ref{lemma:FenchelSubgradient}.
    By rearrangement, we obtain
    \begin{align}
        0
        &\leq  G^f(q_{t+1})-G^f(x_{\star})\nonumber\\
        &= G^f(q_{t+1})+f(x_{\star})\nonumber\\
        &\leq(L-1)(f(x_t)-f(x_{\star}))-L(f(x_{t+1})-f(x_{\star}))+L\log\left(\norm{x_t\odot\left(-\nabla f(x_t)\right)^{1/L}}_1\right)\label{ineq:IterateNormBoundCase2}\\
        &\leq (L-1)(f(x_t)-f(x_{\star}))-L(f(x_{t+1})-f(x_{\star}))\nonumber\\
        &\quad+L\log\left(\norm{x_t\odot\left(-\nabla f(x_t)\right)}_1^{1/L}\cdot\norm{x_t}_1^{1-1/L}\right)\nonumber\\
        &= (L-1)(f(x_t)-f(x_{\star}))-L(f(x_{t+1})-f(x_{\star})),\nonumber
    \end{align}
    where the first equality follows from the equality~\eqref{eq:OptEqCase2}, and the last inequality follows from H\"{o}lder's inequality.
    For the case $\mu=1$, combining the above inequality and the fact that $D_{\varphi^{*}}(x_{\star},q_{t})=f(x_t)-f(x_{\star})-\RenyiKL{x_{\star}}{x_t}\geq 0$, we obtain that $\RenyiKL{x_{\star}}{x_{T+1}}\leq f(x_{T+1})-f(x_{\star})\leq (1-\mu/L)^T(f(x_1)-f(x_{\star}))$.

    For the case $\mu>1$, we further use Lemma~\ref{lemma:FullDomainForThetaMinusOne} and the lower bound on $G^f(q)-G^f(q_{t})$  obtained from the property $g_t-\mu r_t\in\partial (G^f-\mu\varphi^*)(q_t)$  to obtain
    \begin{align}
        &G^f(q)+f(x_t)-\RenyiKL{q_t}{x_t}\nonumber\\
        &\quad= G^f(q)-G^f(q_{t})\nonumber\\
        &\quad\geq \mu D_{\varphi^{*}}(q,q_{t})+\inner{g_t}{q-q_t}\nonumber\\
        &\quad =\mu(G^f(q)+f(x_t)-\RenyiKL{q}{x_t})+\inner{-\log(x_t)+\log(q_t)}{q-q_t},\quad\text{for all } q\in\Delta_m.
    \end{align}
    This implies that
    \begin{align}
        &-f(x_{\star})+f(x_t)\nonumber\\
        &\quad=G^f(x_{\star})+f(x_t)\nonumber\\
        &\quad=\inf_{q\in\Delta_m} G^f(q)+f(x_t)\nonumber\\
        &\quad\leq \inf_{q\in\Delta_m}\RenyiKL{q}{x_t}+\frac{\RenyiKL{q}{q_t}}{\mu-1}\nonumber\\
        &\quad= \frac{-L}{\mu-1}\log\left(\norm{x_t\odot(-\nabla f(x_t))^{1/\mu}}_1^{\mu/L}\right)\nonumber\\
        &\quad\leq \frac{-L}{\mu-1}\log\left(\norm{x_t\odot(-\nabla f(x_t))^{1/L}}_1\cdot\norm{x_t}_1^{\mu/L-1}\right)\nonumber\\
        &\quad= \frac{-L}{\mu-1}\log\left(\norm{x_t\odot(-\nabla f(x_t))^{1/L}}_1\right),\nonumber
    \end{align}
    where the first and second equalities follow from the equality~\eqref{eq:OptEqCase2}, and the last inequality follows from H\"{o}lder's inequality.
    Combining the above inequality with the inequality~\eqref{ineq:IterateNormBoundCase2}, as well as with the fact that $D_{\varphi^{*}}(x_{\star},q_{t})=f(x_t)-f(x_{\star})-\RenyiKL{x_{\star}}{x_t}\geq 0$, we obtain 
    \begin{align}
        \RenyiKL{x_{\star}}{x_{T+1}}
        \leq f(x_{T+1})-f(x_{\star})
        \leq \left(1-\frac{\mu}{L}\right)^{T}(f(x_1)-f(x_{\star})).
    \end{align}
    This concludes the proof for the case $\theta=-1$, $1\leq \mu\leq L$ and $L>1$.

    Finally, for the case $\theta=1$, from the inequality~\eqref{ineq:ConjugateLinearRateProof}, we have
    \begin{align}
        &(L+1)(f(x_{T+1})-f(x_{\star}))+L\RenyiKL{x_{\star}}{x_{T+1}}\nonumber\\
        &\quad\leq \RenyiKL{q_{T+1}}{x_{T+1}}+(L+1)(f(x_{T+1})-f(x_{\star}))+L\RenyiKL{x_{\star}}{x_{T+1}}\nonumber\\
        &\quad=G^f(q_{T+1})-G^f(x_{\star})+LD_{\varphi^{*}}(x_{\star},q_{T+1})\nonumber\\
        &\quad\leq L\left(1-\frac{\mu}{L}\right)^{T}D_{\varphi^{*}}(x_{\star},q_{1})\nonumber\\
        &\quad=L\left(1-\frac{\mu}{L}\right)^{T}(f(x_1)-f(x_{\star})+\RenyiKL{x_{\star}}{x_1}),
    \end{align}
    where the first and last equalities follow from Lemma~\ref{lemma:FenchelSubgradient} and the equality~\eqref{eq:OptEqCase3}.
    This proves the desired linear rate.

    Similarly, from the inequality~\eqref{ineq:ConjugateSublinearRateProof}, we also have
    \begin{align}
        &f(x_{T+1})-f(x_{\star})\nonumber\\
        &\quad\leq\RenyiKL{q_{T+1}}{x_{T+1}}+f(x_{T+1})-f(x_{\star})\nonumber\\
        &\quad=G^f(q_{T+1})-G^f(x_{\star})\nonumber\\
        &\quad\leq\frac{LD_{\varphi^{*}}(x_{\star},q_1)}{T}\nonumber\\
        &\quad=\frac{L(f(x_1)-f(x_{\star})+\RenyiKL{x_{\star}}{x_1})}{T}.
    \end{align}
    This concludes the proof.
\end{proofof}

\section{Applications of Our Framework}
\label{appendix:Applications}
In Appendix~\ref{appendix:FisherMarket}, Appendix~\ref{appendix:KellyPortfolio}, and Appendix~\ref{appendix:DOptimalDesign}, we apply our framework to the Fisher market equilibrium problem, Kelly's portfolio problem, and the D-optimal design problem, respectively.
In Appendix~\ref{appendix:ApplicationLewisWeight} and Appendix~\ref{appendix:ApplicationPetzRenyiCapacity}, we provide detailed proofs of the results stated in Sections~\ref{subsec:LewisWeights} and~\ref{subsec:PetzRenyiCapacity}, where we apply our framework to the $\ell_p$-Lewis weight problem and the Petz-R\'{e}nyi capacity problem, respectively.

\subsection{Application to Fisher Market Equilibrium}
\label{appendix:FisherMarket}
Let the vectors $w$ and $a_i$ be defined as in the equilibrium problem~\eqref{def:Fisher}.
Without loss of generality, we assume that $w\in\mathrm{ri}(\Delta_n)$, $a_i\neq 0$, and $\sum_{i=1}^n a_i[j]>0$ for all $1\leq j\leq m$.

The following lemma shows that our framework applies to the equilibrium problem.
\begin{lemma}
    \label{lemma:FisherCurvature}
    For all $\rho\in(-\infty,0)\cup(0,1)$, the objective function $f_{\text{Fis}}(x)$ of the optimization problem~\eqref{def:Fisher}  satisfies Assumption~\ref{assumption:BasicAssumptions} and the second-order characterization~\eqref{ineq:IntroHessianBounds} with 
    $\theta=-1$, $\mu=\min\Set{1,1/(1-\rho)}$, and $L=\max\Set{1,1/(1-\rho)}$.
\end{lemma}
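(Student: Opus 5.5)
The plan is to verify Assumption~\ref{assumption:BasicAssumptions} directly. For the second-order characterization~\eqref{ineq:IntroHessianBounds}, I will conjugate every matrix by $D\coloneqq\mathrm{Diag}(x)$; this is legitimate because $D$ is invertible on $\mathbb{R}_{++}^m$, so Loewner inequalities are preserved. After conjugation the Hessian bounds should reduce to a variance decomposition for a mixture of probability vectors.

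\emph{Setup and Assumption~\ref{assumption:BasicAssumptions}.} Put $s\coloneqq\rho/(\rho-1)$, so that $(1-\rho)/\rho=-1/s$ and
\[
f_{\text{Fis}}(x)=-\tfrac1s\sum_i w[i]\log\inner{a_i}{x^{s}} .
\]
For $\rho<0$ we have $s\in(0,1)$, and for $\rho\in(0,1)$ we have $s<0$; in both cases $1-s=1/(1-\rho)$.
\begin{itemize}[leftmargin=*]
\item Log-homogeneity: $\inner{a_i}{(rx)^s}=r^s\inner{a_i}{x^s}$ and $\sum_i w[i]=1$ give $f_{\text{Fis}}(rx)=f_{\text{Fis}}(x)-\log r$, so $\theta=-1$.
\item Gradient sign: $-\nabla f_{\text{Fis}}(x)=\sum_i w[i]\,a_i\odot x^{s-1}/\inner{a_i}{x^s}$. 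This is entrywise positive on $\mathbb{R}_{++}^m$ because $w\in\mathrm{ri}(\Delta_n)$, each $a_i\neq 0$, and $\sum_i a_i[j]>0$.
\item Continuity on $\mathbb{R}_+^m\setminus\{0\}$: this is understood in the extended sense. For $s>0$ the map $x\mapsto\inner{a_i}{x^s}$ is continuous, and $f_{\text{Fis}}=+\infty$ exactly when some inner product vanishes. For $s<0$ a zero coordinate with $a_i[j]>0$ makes $\inner{a_i}{x^s}=+\infty$, and the prefactor $-1/s>0$ then gives $f_{\text{Fis}}=+\infty$ continuously.
\end{itemize}

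\emph{Hessian computation.} Introduce the buyer-wise probability vectors $p_i\coloneqq a_i\odot x^s/\inner{a_i}{x^s}\in\Delta_m$ and their mixture $q\coloneqq\sum_i w[i]p_i$. Then $q=-x\odot\nabla f_{\text{Fis}}(x)$, which is consistent with Lemma~\ref{lemma:SelfNormalize}. A routine differentiation should give
\[
D\nabla^2 f_{\text{Fis}}D=\sum_i w[i]\big((1-s)\mathrm{Diag}(p_i)+s\,p_ip_i^\top\big),
\qquad
DH_fD=\mathrm{Diag}(q)-qq^\top\eqqcolon M .
\]
With $V\coloneqq\sum_i w[i]p_ip_i^\top-qq^\top$, this yields
\[
D\big(\nabla^2 f_{\text{Fis}}-\nabla f_{\text{Fis}}\nabla f_{\text{Fis}}^\top\big)D=(1-s)M+sV .
\]

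\emph{Key step and case split.} The key step is the sandwich $0\le V\le M$.
\begin{itemize}[leftmargin=*]
\item $V\ge0$: $V$ is a covariance, namely $\sum_i w[i](p_i-q)(p_i-q)^\top$.
\item $V\le M$: the law-of-total-variance identity $M=\sum_i w[i]\big(\mathrm{Diag}(p_i)-p_ip_i^\top\big)+V$ holds, and each $\mathrm{Diag}(p)-pp^\top\succeq0$ for $p\in\Delta_m$.
\end{itemize}
Given the sandwich, the two cases follow.
\begin{itemize}[leftmargin=*]
\item If $s\in(0,1)$, i.e.\ $\rho<0$: $(1-s)M\le(1-s)M+sV\le M$. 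This gives $\mu=1/(1-\rho)$ and $L=1$.
\item If $s<0$, i.e.\ $\rho\in(0,1)$: $M\le(1-s)M+sV\le(1-s)M$. This gives $\mu=1$ and $L=1/(1-\rho)$.
\end{itemize}
Both cases match $\mu=\min\{1,1/(1-\rho)\}$ and $L=\max\{1,1/(1-\rho)\}$. Moreover, the case $\rho\in(0,1)$ lies in the regime $\mu\ge1$, $L>1$ that the framework allows for $\theta=-1$.

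\emph{Main obstacle.} The delicate part is getting the Hessian algebra right: the signs of $s$ and $s-1$, and the $x^{s-2}$ factors that $D$ absorbs. Once it is written in the $p_i,q$ coordinates, the bound is just the variance decomposition above.
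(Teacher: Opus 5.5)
Your proof is correct, but it takes a somewhat different route from the paper.

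\textbf{The paper's route.} It writes $\nabla^2 f_{\text{Fis}}=sH_1+(1-s)H_2$ with $H_2=\mathrm{Diag}(-\nabla f_{\text{Fis}}(x)\oslash x)$. It proves $0\leq H_1\leq H_2$ by Cauchy--Schwarz, which gives the purely diagonal bound $\min\{1,1-s\}H_2\leq\nabla^2 f_{\text{Fis}}\leq\max\{1,1-s\}H_2$. Only then does it add the rank-one terms, via the generic Lemma~\ref{lemma:RemoveRankOne}: a congruence by $Q(x)=I_m+x\nabla f(x)^{\top}$ combined with the Euler identity $\nabla^2 f(x)x=-\nabla f(x)$ from Lemma~\ref{lemma:LogHomoHessianEq}.

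\textbf{Your route.} You work with the full matrices $\nabla^2 f_{\text{Fis}}-\nabla f_{\text{Fis}}\nabla f_{\text{Fis}}^{\top}$ and $H_f$ from the start. After scaling by $D=\mathrm{Diag}(x)$ one has $DH_1D=\sum_i w[i]p_ip_i^{\top}$ and $DH_2D=\mathrm{Diag}(q)$. So your inequality $V\leq M$ is exactly the paper's $H_1\leq H_2$; both rest on the same fact $pp^{\top}\leq\mathrm{Diag}(p)$ for $p\in\Delta_m$. Your $V\succeq 0$, however, is the stronger statement $H_1\geq\nabla f_{\text{Fis}}\nabla f_{\text{Fis}}^{\top}$, compared with the paper's $H_1\geq 0$. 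That extra strength is what lets you absorb the rank-one corrections directly, with no congruence argument.

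\textbf{What each buys.} Your version is self-contained and gives a clean law-of-total-variance reading of both constants. The paper's version is modular: the template ``diagonal Hessian bound plus Lemma~\ref{lemma:RemoveRankOne}'' is reused verbatim for D-optimal design and Lewis weights. There only a diagonal Hessian bound, from \citet{Lu2018b} and \citet{Lee2020}, is available, and no mixture structure is exploited.

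Your check of Assumption~\ref{assumption:BasicAssumptions} is also fine. This includes the extended-valued continuity for $\rho\in(0,1)$, where $\sum_i a_i[j]>0$ guarantees that every boundary point of $\mathbb{R}_{+}^m\setminus\{0\}$ has value $+\infty$.
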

The proof of Lemma~\ref{lemma:FisherCurvature} is provided in Appendix~\ref{appendix:ProofFisherCurvature}.
Furthermore, from the log-homogeneity of $f_{\text{Fis}}(x)$ and the assumption that $-\nabla f_{\text{Fis}}(x)>0$, we have $f_{\text{Fis}}(x)\geq f_{\text{Fis}}(\mathbf{1}_m)=f_{\text{Fis}}(\mathbf{1}_m/m)-\log(m)$ for all $x\in\Delta_m$.
Combining this fact with Lemma~\ref{lemma:FisherCurvature} and Theorem~\ref{theorem:ConvergenceRateMGM}, we obtain the following corollary.
\begin{corollary}
    \label{cor:FisherConv}
    Let $x_t$ be the sequence generated by the MGM~\eqref{alg:IntroMGM} for solving the optimization problem~\eqref{def:Fisher} with $\theta=-1$, $x_1=\mathbf{1}_m/m$, and $\eta=1/\max\Set{1,1/(1-\rho)}$.
    Then, for all $\rho\in(-\infty,0)\cup(0,1)$, we have
    \begin{align}
        f_{\text{Fis}}(x_{T+1})-f_{\text{Fis}}(x_{\text{Fis},\star})
        \leq \sigma^T\log\left(m\right),\nonumber
    \end{align}
    where $\sigma=\rho$ for $\rho\in(0,1)$ and $\sigma=\rho/(\rho-1)$ for $\rho\in(-\infty,0)$.
\end{corollary}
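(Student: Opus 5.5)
The bound follows by plugging Lemma~\ref{lemma:FisherCurvature} into Theorem~\ref{theorem:ConvergenceRateMGM} (with $\theta=-1$) and then controlling the initial quantities $K_1$ and $O_1$ at $x_1=\mathbf{1}_m/m$. The proof splits into two cases according to the sign of $\rho$, since these land in different branches of the theorem.

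\emph{Case $\rho\in(0,1)$.} Here $1/(1-\rho)>1$, so $\mu=1$ and $L=1/(1-\rho)>1$. We are therefore in the second branch of Theorem~\ref{theorem:ConvergenceRateMGM}, with $\eta=1/L=1-\rho$. The contraction factor is $1-\mu/L=1-(1-\rho)=\rho$, and the theorem gives
\begin{align}
f_{\text{Fis}}(x_{T+1})-f_{\text{Fis}}(x_{\text{Fis},\star})\leq\rho^T O_1 .
\end{align}
It remains to show $O_1\le\log m$. By $(-1)$-log-homogeneity and $-\nabla f_{\text{Fis}}>0$, $f_{\text{Fis}}$ is coordinatewise nonincreasing. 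For $x\in\Delta_m$ we have $x\le\mathbf{1}_m$, so
\begin{align}
f_{\text{Fis}}(x)\ge f_{\text{Fis}}(\mathbf{1}_m)=f_{\text{Fis}}(\mathbf{1}_m/m)-\log m .
\end{align}
This is the fact already noted just before the corollary. Monotonicity extends to boundary points of $\Delta_m$ by continuity, as in Assumption~\ref{assumption:BasicAssumptions}. Taking $x=x_{\text{Fis},\star}$ gives $O_1=f_{\text{Fis}}(x_1)-f_{\text{Fis}}(x_{\text{Fis},\star})\le\log m$.

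\emph{Case $\rho<0$.} Here $1/(1-\rho)<1$, so $\mu=1/(1-\rho)$ and $L=1$. We are in the first branch, with $\eta=1$. The contraction factor is $1-\mu=-\rho/(1-\rho)=\rho/(\rho-1)$, and the theorem bounds the error by $\sigma^T L(K_1-O_1)\le \sigma^T K_1$, because $O_1\ge0$ by optimality of $x_{\text{Fis},\star}$. Finally,
\begin{align}
K_1=\KL(x_{\text{Fis},\star}\,\|\,\mathbf{1}_m/m)=\log m-H(x_{\text{Fis},\star})\le\log m ,
\end{align}
where $H$ denotes Shannon entropy. Here we use $\min\{1/T,\sigma^T\}\le\sigma^T$.

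The only step needing care is the bound $O_1\le\log m$ in the $\rho\in(0,1)$ case, which uses the monotonicity argument. Even that is essentially given in the text, so no real obstacle remains.
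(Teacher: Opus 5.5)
Your proof is correct and follows the paper's route: you substitute the constants of Lemma~\ref{lemma:FisherCurvature} into the matching branch of Theorem~\ref{theorem:ConvergenceRateMGM}, and for $\rho\in(0,1)$ you bound the initial term by $O_1\le\log m$ using monotonicity plus log-homogeneity. For $\rho<0$ you spell out a step the paper leaves implicit, namely $L(K_1-O_1)=K_1-O_1\le K_1\le\log m$; replacing $\min\{1/T,\sigma^T\}$ by $\sigma^T$ there is valid because $K_1-O_1=D_{\varphi^*}(x_{\text{Fis},\star},q_1)\ge 0$.
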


The MGM~\eqref{alg:IntroMGM} with $\eta=1$ and its guarantee in Corollary~\ref{cor:FisherConv} recover the algorithm and its convergence rate established by \citet{Cheung2018c} for $\rho\in(-\infty,0)$.

For the case where $\rho\in(-1,0)\cup(0,1)$, \citet{Chu2025} also established a convergence rate $\tilde{O}(\abs{\rho}^T)$ in terms of the optimization error bound.
Our convergence guarantee in Corollary~\ref{cor:FisherConv} recovers this result for $\rho>0$ and is even faster for $\rho<0$.

Finally, we note that, in addition to the optimization error bound, \citet{Chu2025} also established the same convergence rate in terms of the Thompson metric $\norm{\log(x)-\log(x_{\text{Fis},\star})}_{\infty}$.
As shown in Appendix~\ref{appendix:ComparisonErrorMeasures}, this metric upper bounds the optimization error. 
While we recover the optimization error bound, we do not recover their convergence guarantee in terms of the Thompson metric.

\subsubsection{Proof of Lemma~\ref{lemma:FisherCurvature}}
\label{appendix:ProofFisherCurvature}

The proof of Lemma~\ref{lemma:FisherCurvature} is based on Lemma~\ref{lemma:LogHomoHessianEq} and Lemma~\ref{lemma:RemoveRankOne}.
\begin{lemma}
    \label{lemma:LogHomoHessianEq}
    For any $\theta$-log-homogeneous function $f$ that is twice differentiable on $\mathbb{R}_{++}^m$, we have $\nabla^2 f(x)x=- \nabla f(x)$ for all $x\in\mathbb{R}_{++}^m$.
\end{lemma}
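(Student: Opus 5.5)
We differentiate the identity from Lemma~\ref{lemma:SelfNormalize}, or equivalently differentiate the gradient form of log-homogeneity.

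The first step is to establish that the gradient is homogeneous of degree $-1$. Differentiate $f(rx)=f(x)+\theta\log r$ with respect to $x$, holding $r>0$ fixed. This gives $r\nabla f(rx)=\nabla f(x)$, that is, $\nabla f(rx)=\nabla f(x)/r$ for all $x\in\mathbb{R}_{++}^m$. The paper already uses this fact in the proof of Lemma~\ref{lemma:RelativeSmoothnessStrongConvexitySubgradient}.

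The second step is to differentiate this identity in $r$ at $r=1$. Since $f$ is twice differentiable on the open set $\mathbb{R}_{++}^m$, the chain rule gives
\begin{align}
\frac{d}{dr}\nabla f(rx)=\nabla^2 f(rx)\,x=-\frac{1}{r^2}\nabla f(x).
\end{align}
Setting $r=1$ yields $\nabla^2 f(x)x=-\nabla f(x)$, which is exactly Euler's relation for a function homogeneous of degree $-1$.

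As a cross-check, one can instead differentiate $\inner{\nabla f(x)}{x}=\theta$ from Lemma~\ref{lemma:SelfNormalize} with respect to $x$. This gives $\nabla^2 f(x)x+\nabla f(x)=0$, the same conclusion, using only the symmetry of the Hessian.

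I expect no real obstacle. The only point needing care is that $rx$ stays in $\mathbb{R}_{++}^m$, where $f$ is twice differentiable, which holds for every $r>0$. Note also that the result does not depend on $\theta$.
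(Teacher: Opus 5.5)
Your proof is correct and is essentially the paper's argument: your ``cross-check'' is exactly the paper's proof (set $g(x)=\langle \nabla f(x),x\rangle$, which equals $\theta$ by Lemma~\ref{lemma:SelfNormalize}, and differentiate to get $\nabla^2 f(x)x+\nabla f(x)=0$). Your main route only reverses the order of the two differentiations of $f(rx)=f(x)+\theta\log r$, going through $\nabla f(rx)=\nabla f(x)/r$; the minor gain is that it yields $\nabla^2 f(x)x$ directly as a directional derivative of $\nabla f$, without appealing to symmetry of the Hessian.
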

\begin{proof}
    Define $g(x)=\inner{\nabla f(x)}{x}$.
    From Lemma~\ref{lemma:SelfNormalize}, we have $g(x)=\theta$ for all $x\in\mathbb{R}_{++}^m$.
    Differentiating $g(x)$ yields
    \begin{align}
        \nabla g(x)
        =\nabla^2 f(x) x + \nabla f(x)
        =0.
    \end{align}
    This concludes the proof.
\end{proof}

\begin{lemma}
    \label{lemma:RemoveRankOne}
    Let $f$ be any $(-1)$-log-homogeneous function that satisfies Assumption~\ref{assumption:BasicAssumptions}.
    Furthermore, assume that $f$ is twice differentiable on $\mathbb{R}_{++}^m$ and satisfies the following inequalities:
    \begin{align}
        \mu\mathrm{Diag}\left(-\nabla f(x)\oslash x\right)
        \leq \nabla^2 f(x)
        \leq L\mathrm{Diag}\left(-\nabla f(x)\oslash x\right),\quad\text{for all } x\in\mathbb{R}_{++}^m.
    \end{align}
    Then, $f$ satisfies the   second-order characterization~\eqref{ineq:IntroHessianBounds} with the same constants $\mu$ and $L$.
\end{lemma}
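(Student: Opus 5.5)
With $\theta=-1$, write $g\coloneqq-\nabla f(x)>0$ and $D\coloneqq\mathrm{Diag}(g\oslash x)$. In this notation the second-order characterization~\eqref{ineq:IntroHessianBounds} reads
\begin{align}
    \mu(D-gg^{\top})\leq\nabla^2 f(x)-gg^{\top}\leq L(D-gg^{\top}),
\end{align}
since $H_f(x)=D-gg^{\top}$. The hypothesis gives $\mu D\leq\nabla^2 f(x)\leq LD$. So the task is to show that subtracting $gg^{\top}$ from $\nabla^2 f(x)$ and $c\,gg^{\top}$ from $cD$, for $c\in\{\mu,L\}$, preserves the two inequalities. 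The difference between the target and the hypothesis is $(c-1)gg^{\top}$, which is indefinite in sign across the cases. So a direct comparison fails, and the plan is to use the special direction $x$.

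The key facts are Lemma~\ref{lemma:SelfNormalize} and Lemma~\ref{lemma:LogHomoHessianEq}, which give $\inner{g}{x}=1$ and $\nabla^2 f(x)x=g$. Also $Dx=g$. Hence $x$ lies in the kernel of all three matrices $D-gg^{\top}$, $\nabla^2 f(x)-gg^{\top}$, and $H_f(x)$: for instance $(\nabla^2 f(x)-gg^{\top})x=g-g\inner{g}{x}=0$. Thus for $c\in\{\mu,L\}$ the matrix
\begin{align}
    M_c\coloneqq\pm\bigl(c(D-gg^{\top})-(\nabla^2 f(x)-gg^{\top})\bigr)
\end{align}
annihilates $x$, and $M_c$ differs from the hypothesis matrix $N_c\coloneqq\pm(cD-\nabla^2 f(x))$, which is PSD, only by $\mp(c-1)gg^{\top}$.

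To check $M_c\geq0$, decompose an arbitrary vector as $v=u+sx$ with $\inner{g}{u}=0$; this is possible because $\inner{g}{x}=1$, by taking $s=\inner{g}{v}$. Since $M_c$ is symmetric with $M_cx=0$, we get $v^{\top}M_cv=u^{\top}M_cu$. Because $g^{\top}u=0$, the rank-one term vanishes, so $u^{\top}M_cu=u^{\top}N_cu\geq0$ by hypothesis. This holds for both $c=\mu$ and $c=L$ with the appropriate signs, which yields both sides of~\eqref{ineq:IntroHessianBounds} with the same constants.

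The only delicate step is confirming that $N_c$, and not merely $M_c$, is used on the subspace $\{u:\inner{g}{u}=0\}$, and that the decomposition is valid; it relies exactly on $\inner{g}{x}=1\neq0$, which is guaranteed by $(-1)$-log-homogeneity. I expect no other obstacle: Assumption~\ref{assumption:BasicAssumptions} is given, and twice differentiability is assumed.
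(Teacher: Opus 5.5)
Your proposal is correct and is essentially the paper's argument. The paper phrases it as the congruence $Q(x)^{\top}(\cdot)\,Q(x)$ with $Q(x)=I_m+x\nabla f(x)^{\top}=I_m-xg^{\top}$, and your vector $u=v-(g^{\top}v)\,x$ is exactly $Q(x)v$. So your identity $v^{\top}M_cv=u^{\top}N_cu$ is the same as the paper's $M_c=Q(x)^{\top}N_cQ(x)$, verified with the same two facts from Lemma~\ref{lemma:SelfNormalize} and Lemma~\ref{lemma:LogHomoHessianEq}.
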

\begin{proof}
    Define $Q(x)=I_m+x\nabla f(x)^{\top}$.
    We have
    \begin{align}
        &\mu \mathrm{Diag}(-\nabla f(x)\oslash x)-\mu\nabla f(x)\nabla f(x)^{\top}\nonumber\\
        &=\mu Q(x)^{\top}\mathrm{Diag}(-\nabla f(x)\oslash x)Q(x)\nonumber\\
        &\leq Q(x)^{\top}\nabla^2 f(x) Q(x)\nonumber\\
        &=\nabla^2 f(x)-\nabla f(x)\nabla f(x)^{\top}\nonumber\\
        &\leq LQ(x)^{\top}\mathrm{Diag}(-\nabla f(x)\oslash x)Q(x)\nonumber\\
        &=L\mathrm{Diag}(-\nabla f(x)\oslash x)-L\nabla f(x)\nabla f(x)^{\top},
    \end{align}
    where the first and last equalities follow from Lemma~\ref{lemma:SelfNormalize}, and the second equality follows from Lemma~\ref{lemma:LogHomoHessianEq}.
    This concludes the proof.
\end{proof}

\begin{proofof}{Lemma~\ref{lemma:FisherCurvature}}
    The log-homogeneity of $f_{\text{Fis}}$ follows directly from the homogeneity of the power function $x\mapsto x^{\rho/(\rho-1)}$.

    The continuity of $f_{\text{Fis}}$ follows from the continuity of the logarithm and the power function $x\mapsto x^{\rho/(\rho-1)}$.

    The negativity of the gradient follows from the computation
    \begin{align}
        -\nabla f_{\text{Fis}}(x)
        =\sum_{i=1}^n w[i]\frac{a_i\odot x^{1/(\rho-1)}}{\inner{a_i}{x^{\rho/(\rho-1)}}}
        \in\mathbb{R}_{++}^m,\quad\text{for all } x\in\mathbb{R}_{++}^m.
    \end{align}

    It remains to verify the inequalities~\eqref{ineq:IntroHessianBounds}.
    We compute
    \begin{align}
        \nabla^2 f_{\text{Fis}}(x)
        =\frac{\rho}{\rho-1}H_1+\frac{1}{1-\rho}H_2,\quad\text{for all } x\in\mathbb{R}_{++}^m,
    \end{align}
    where 
    \begin{align}
        H_1
        =\sum_{i=1}^n w[i]\frac{(a_i\odot x^{1/(\rho-1)}) (a_i\odot x^{1/(\rho-1)})^{\top}}{\inner{a_i}{x^{\rho/(\rho-1)}}^2},
    \end{align}
    and 
    \begin{align}
        H_2
        =\mathrm{Diag}\left(\sum_{i=1}^n w[i]\frac{a_i\odot x^{(2-\rho)/(\rho-1)}}{\inner{a_i}{x^{\rho/(\rho-1)}}}\right)
        =\mathrm{Diag}\left(-\nabla f_{\text{Fis}}(x)\oslash x\right).
    \end{align}

    Furthermore, we have
    \begin{align}
        v^{\top} H_1 v
        &=\sum_{i=1}^n w[i]\frac{\inner{a_i\odot x^{1/(\rho-1)}}{v}^2}{\inner{a_i}{x^{\rho/(\rho-1)}}^2}\nonumber\\
        &=\sum_{i=1}^n w[i]\frac{\inner{a_i\odot x^{1/(\rho-1)}}{v}^2\inner{a_i\odot x^{(2-\rho)/(\rho-1)}}{v^2}}{\inner{a_i}{x^{\rho/(\rho-1)}}\cdot\inner{a_i}{x^{\rho/(\rho-1)}}\cdot\inner{a_i\odot x^{(2-\rho)/(\rho-1)}}{v^2}}\nonumber\\
        &\leq\sum_{i=1}^n w[i]\frac{\inner{a_i\odot x^{(2-\rho)/(\rho-1)}}{v^2}}{\inner{a_i}{x^{\rho/(\rho-1)}}}\nonumber\\
        &=v^{\top} H_2 v,\quad\text{for all } v\in\mathbb{R}^m,
    \end{align}
    where the inequality follows from the Cauchy-Schwarz inequality.
    Here, without loss of generality, we assume that $\inner{a_i\odot x^{(2-\rho)/(\rho-1)}}{v^2}\neq 0$ for all $i$; otherwise, we may remove the corresponding terms from the summation.
    This implies that $0\leq H_1\leq H_2$.

    Therefore, we have
    \begin{align}
        \min\Set{1,1/(1-\rho)}H_2
        \leq\nabla^2 f_{\text{Fis}}(x)
        \leq \max\Set{1,1/(1-\rho)}H_2.
    \end{align}
    The proof then follows by applying Lemma~\ref{lemma:RemoveRankOne}.
\end{proofof}

\subsection{Application to Kelly's Portfolio}
\label{appendix:KellyPortfolio}
In this section, to ensure that the objective function in Kelly's portfolio problem~\eqref{def:Kelly} satisfies Assumption~\ref{assumption:BasicAssumptions}, we make the following assumptions on the distribution $P$ of the price relatives $a$:
\begin{assumption}
    \label{assumption:KellyDistribution}
    \leavevmode
    \begin{itemize}
        \item We have $a\in\mathbb{R}_{+}^m\setminus\Set{0}$ almost surely.
        \item For every $x\in\mathbb{R}_{++}^m$, we have $\nabla f_{\text{Kel}}(x)=-\mathbb{E}_{a\sim P}\left[\frac{a}{\inner{a}{x}}\right]$ and $\nabla^2 f_{\text{Kel}}(x)=\mathbb{E}_{a\sim P}\left[\frac{a a^{\top} }{\inner{a}{x}^2}\right]$.
        \item For every $1\leq j\leq m$, we have $\mathbb{P}_{a\sim P}(a[j]>0)>0$.
        \item We have $\mathbb{E}_{a\sim P}[\abs{\log\left(\inner{a}{\mathbf{1}_m}\right)}]<\infty$.
    \end{itemize}
\end{assumption}
\begin{remark}
    The third condition in Assumption~\ref{assumption:KellyDistribution} 
    can be imposed without loss of generality by removing stocks
    whose price relatives are zero almost surely.
    Under the first condition, the second and fourth conditions
    hold automatically whenever $P$ has finite support, as in the finite-sum formulations of the best constant rebalanced portfolio \citep{Cover1991}
    and of positron
    emission tomography.
\end{remark}

The following lemma shows that our framework applies to the optimization problem~\eqref{def:Kelly}.
\begin{lemma}
    \label{lemma:KellyCurvature}
    The function $f_{\text{Kel}}$ in the optimization problem~\eqref{def:Kelly} satisfies Assumption~\ref{assumption:BasicAssumptions} and the second-order characterization~\eqref{ineq:IntroHessianBounds} with $(\theta,\mu,L)=(-1,0,1)$.
\end{lemma}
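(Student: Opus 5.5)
The plan is to verify the three parts of Assumption~\ref{assumption:BasicAssumptions} directly from Assumption~\ref{assumption:KellyDistribution}, and then obtain the Hessian bounds through Lemma~\ref{lemma:RemoveRankOne}, exactly as in the proof of Lemma~\ref{lemma:FisherCurvature}.

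\emph{Log-homogeneity.} Since $\inner{a}{rx}=r\inner{a}{x}$, we get $f_{\text{Kel}}(rx)=f_{\text{Kel}}(x)-\log r$, so $\theta=-1$.

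\emph{Sign of the gradient.} By the second condition, $-\nabla f_{\text{Kel}}(x)[j]=\mathbb{E}[a[j]/\inner{a}{x}]$. The integrand is nonnegative, and by the third condition it is positive with positive probability. Hence $-\nabla f_{\text{Kel}}(x)>0$ on $\mathbb{R}_{++}^m$.

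\emph{Continuity on $\mathbb{R}_+^m\setminus\Set{0}$.} This is the step I expect to need the most care, since $f_{\text{Kel}}$ may equal $+\infty$ on the boundary. For finiteness, on $\mathbb{R}_{++}^m$ we have $\min_j x[j]\inner{a}{\mathbf{1}_m}\le\inner{a}{x}\le\max_j x[j]\inner{a}{\mathbf{1}_m}$. The fourth condition then makes $\log\inner{a}{x}$ integrable, so $f_{\text{Kel}}$ is finite there. Continuity in the extended-valued sense follows by splitting $\log\inner{a}{x}=\log\inner{a}{\mathbf{1}_m}+\log(\inner{a}{x}/\inner{a}{\mathbf{1}_m})$. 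The first term is an integrable constant, so it suffices to treat the second. The ratio $u_a(x)=\inner{a}{x}/\inner{a}{\mathbf{1}_m}$ lies in $[0,\max_j x[j]]$, so $\log u_a(x)$ is bounded above uniformly on compact neighborhoods. Then:
\begin{itemize}
\item Fatou's lemma applied to $-\log u_a$ gives lower semicontinuity of $f_{\text{Kel}}$.
\item Monotone/dominated convergence on the positive parts gives the matching upper bound, which yields continuity as an extended-real-valued function on $\mathbb{R}_+^m\setminus\Set{0}$.
\end{itemize}
This extended-valued notion is what the later limiting arguments (e.g.\ $x_{\star,k}\to x_\star$) require.

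\emph{Second-order characterization.} We need
\begin{align}
0\le\nabla^2 f_{\text{Kel}}(x)\le\mathrm{Diag}(-\nabla f_{\text{Kel}}(x)\oslash x).
\end{align}
The lower bound is immediate, since $\mathbb{E}[aa^\top/\inner{a}{x}^2]$ is positive semidefinite. For the upper bound, fix $v\in\mathbb{R}^m$. Cauchy--Schwarz gives
\begin{align}
\inner{a}{v}^2=\Bigl(\sum_j \sqrt{a[j]x[j]}\,\frac{\sqrt{a[j]}\,v[j]}{\sqrt{x[j]}}\Bigr)^2\le\inner{a}{x}\sum_j \frac{a[j]v[j]^2}{x[j]}.
\end{align}
Dividing by $\inner{a}{x}^2$ and taking expectations yields $v^\top\nabla^2 f_{\text{Kel}}(x)v\le\sum_j v[j]^2\,\mathbb{E}[a[j]/\inner{a}{x}]/x[j]$, which is exactly the claimed upper bound.

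Lemma~\ref{lemma:RemoveRankOne} with $\mu=0$ and $L=1$ then converts these diagonal bounds into \eqref{ineq:IntroHessianBounds} with $(\theta,\mu,L)=(-1,0,1)$. Twice differentiability, which that lemma requires, is supplied by the second condition of Assumption~\ref{assumption:KellyDistribution}.
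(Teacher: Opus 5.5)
Your proposal is correct, and it differs from the paper's proof in two places.

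\textbf{Hessian bounds.} The paper does not invoke Lemma~\ref{lemma:RemoveRankOne}. With $(\theta,\mu,L)=(-1,0,1)$, the characterization~\eqref{ineq:IntroHessianBounds} reads exactly $\nabla f_{\text{Kel}}(x)\nabla f_{\text{Kel}}(x)^{\top}\leq\nabla^2 f_{\text{Kel}}(x)\leq\mathrm{Diag}(-\nabla f_{\text{Kel}}(x)\oslash x)$. The paper proves the left inequality directly by Jensen, $(\mathbb{E}[\inner{a}{v}/\inner{a}{x}])^2\leq\mathbb{E}[\inner{a}{v}^2/\inner{a}{x}^2]$, and the right one by the same Cauchy--Schwarz bound you use. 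You instead prove only the trivial $\nabla^2 f_{\text{Kel}}\geq 0$ and let the congruence by $Q(x)=I_m+x\nabla f_{\text{Kel}}(x)^{\top}$ produce the rank-one term. For $\mu=0$ this is a genuine upgrade: convexity plus $(-1)$-log-homogeneity already forces $\nabla^2 f\geq\nabla f\nabla f^{\top}$. Your route puts Kelly on the same template as the Fisher market, D-optimal design, and Lewis weight cases, whereas the paper's Jensen step is a one-line self-contained check.

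\textbf{Continuity.} The paper cites Lemma~\ref{lemma:KellyContinuity} on $\Delta_m$ and extends it via $f_{\text{Kel}}(x)=f_{\text{Kel}}(x/\norm{x}_1)-\log\norm{x}_1$. You prove continuity from scratch, which makes visible where the fourth condition of Assumption~\ref{assumption:KellyDistribution} enters.

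Your continuity argument is sound, but its upper-semicontinuity half is only gestured at. The phrase ``monotone/dominated convergence on the positive parts'' names no dominating function, and monotone convergence does not apply to an arbitrary sequence $x_k\to x$. The missing observation is that, for $x_k\to x$ in $\mathbb{R}_+^m\setminus\Set{0}$, eventually $x_k\geq x/2$ entrywise. This is trivial off the support of $x$ and follows from convergence on it. Since $a\geq 0$, this gives $-\log u_a(x_k)\leq-\log u_a(x)+\log 2$. Combined with your uniform lower bound $-\log u_a(x_k)\geq-\log B$, where $B\geq\max_j x_k[j]$ for all $k$, you get an integrable dominating function whenever $f_{\text{Kel}}(x)<\infty$. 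Dominated convergence then yields $f_{\text{Kel}}(x_k)\to f_{\text{Kel}}(x)$. When $f_{\text{Kel}}(x)=\infty$, your Fatou step already suffices.
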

The proof of Lemma~\ref{lemma:KellyCurvature} is provided in Appendix~\ref{appendix:ProofKellyCurvature}.

Combining Lemma~\ref{lemma:KellyCurvature} with Theorem~\ref{theorem:ConvergenceRateMGM}, we obtain the following corollary, which recovers the convergence rate for the MGM~\eqref{alg:IntroMGM} with $\eta=1$ established by \citet{Iusem1992}, \citet{Frankowski2022}, and \citet{Lin2021}.
\begin{corollary}
    \label{cor:KellyConv}
    Let $x_t$ be the sequence generated by the MGM~\eqref{alg:IntroMGM} for solving the optimization problem~\eqref{def:Kelly} with $\eta=1$, $\theta=-1$, and $x_1=\mathbf{1}_m/m$.
    Then, for all $T$, we have
    \begin{align}
        f_{\text{Kel}}(x_{T+1})-f_{\text{Kel}}(x_{\text{Kel},\star})
        \leq \frac{\log(m)}{T}.
    \end{align}
\end{corollary}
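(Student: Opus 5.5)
This follows directly from Lemma~\ref{lemma:KellyCurvature} and Theorem~\ref{theorem:ConvergenceRateMGM}. The main work is bounding the constant $L(K_1-O_1)$. Since $(\theta,\mu,L)=(-1,0,1)$, we are in the first case of Theorem~\ref{theorem:ConvergenceRateMGM}, with $\eta=1/L=1$ and $x_1=\mathbf{1}_m/m\in\mathrm{ri}(\Delta_m)$. The theorem then gives
\begin{align}
    f_{\text{Kel}}(x_{T+1})-f_{\text{Kel}}(x_{\text{Kel},\star})
    \leq \min\Set{\frac{1}{T},1}\left(K_1-O_1\right),
\end{align}
with $K_1=\RenyiKL{x_{\text{Kel},\star}}{\mathbf{1}_m/m}$ and $O_1=f_{\text{Kel}}(\mathbf{1}_m/m)-f_{\text{Kel}}(x_{\text{Kel},\star})$.

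To bound the constant, I note that $O_1\geq 0$ by optimality of $x_{\text{Kel},\star}$, so $K_1-O_1\leq K_1$. Next,
\begin{align}
    K_1=\inner{x_{\text{Kel},\star}}{\log(x_{\text{Kel},\star})}+\log(m)\leq\log(m),
\end{align}
because the negative entropy is nonpositive on the simplex. Substituting gives the bound $\log(m)/T$, since $\min\Set{1/T,1}=1/T$ for $T\geq1$.

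The only step needing care is that $f_{\text{Kel}}(x_{\text{Kel},\star})$ is finite and a minimizer exists. Finiteness of $f_{\text{Kel}}(\mathbf{1}_m/m)$ follows from the fourth condition of Assumption~\ref{assumption:KellyDistribution}. Existence of a minimizer follows from continuity on the compact simplex, which is part of Assumption~\ref{assumption:BasicAssumptions} as verified in Lemma~\ref{lemma:KellyCurvature}. Hence $O_1$ is a well-defined nonnegative real number.

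Beyond this, the argument is just substitution into Theorem~\ref{theorem:ConvergenceRateMGM}, and I expect no real obstacle.
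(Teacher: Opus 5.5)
Your proposal is correct and follows the paper's route: plug $(\theta,\mu,L)=(-1,0,1)$ from Lemma~\ref{lemma:KellyCurvature} into the first case of Theorem~\ref{theorem:ConvergenceRateMGM}, drop $O_1\geq 0$, and use $K_1=\inner{x_{\text{Kel},\star}}{\log(x_{\text{Kel},\star})}+\log(m)\leq\log(m)$. Your check that $O_1$ is finite and that a minimizer exists is a reasonable addition; the paper leaves it implicit.
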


\subsubsection{Proof of Lemma~\ref{lemma:KellyCurvature}}
\label{appendix:ProofKellyCurvature}
The proof of Lemma~\ref{lemma:KellyCurvature} is based on Lemma~\ref{lemma:KellyContinuity}.

\begin{lemma}[{[\citealp{Vajda1993}, Lemma~5]}]
    \label{lemma:KellyContinuity}
    Let $f_{\text{Kel}}$ be the function defined in the optimization problem~\eqref{def:Kelly} and suppose that Assumption~\ref{assumption:KellyDistribution} holds.
    Then, $f_{\text{Kel}}$ is continuous on $\Delta_m$.
\end{lemma}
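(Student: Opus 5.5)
The plan is to prove continuity of $f_{\text{Kel}}$ on $\Delta_m$ in the extended-value sense, with values in $(-\infty,\infty]$, since $f_{\text{Kel}}$ may equal $+\infty$ at boundary points where $\inner{a}{x}=0$ with positive probability. I would split off the part of the integrand that does not depend on $x$, and then prove lower and upper semicontinuity separately.

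\emph{Decomposition.} By the first condition of Assumption~\ref{assumption:KellyDistribution}, $\inner{a}{\mathbf{1}_m}>0$ almost surely. Define $r(a,x)\coloneqq\inner{a}{x}/\inner{a}{\mathbf{1}_m}$. Since $x\in\Delta_m$ and $a\geq 0$, we have $r(a,x)\in[0,1]$. This gives the identity
\begin{align}
  f_{\text{Kel}}(x)=-\mathbb{E}_{a\sim P}\left[\log\left(\inner{a}{\mathbf{1}_m}\right)\right]+\mathbb{E}_{a\sim P}\left[-\log r(a,x)\right],\quad\text{for all } x\in\Delta_m.
\end{align}
The first term is a finite constant by the fourth condition of Assumption~\ref{assumption:KellyDistribution}. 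The second integrand is nonnegative and may equal $+\infty$, so the split is legitimate and the second expectation is always well defined in $[0,\infty]$. It therefore suffices to show that $\Phi(x)\coloneqq\mathbb{E}[-\log r(a,x)]$ is continuous on $\Delta_m$ as a $[0,\infty]$-valued map.

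\emph{Lower semicontinuity.} Take $x_k\to x$ in $\Delta_m$. For each fixed $a$, the map $x\mapsto-\log r(a,x)$ is continuous into $[0,\infty]$, because $x\mapsto\inner{a}{x}$ is continuous and $-\log$ is continuous on $[0,\infty)$ with $-\log 0=\infty$. Since the integrands are nonnegative, Fatou's lemma gives $\Phi(x)\leq\liminf_k\Phi(x_k)$.

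\emph{Upper semicontinuity.} This is the step I expect to be the main obstacle, because there is no integrable dominating function near boundary points where $\Phi$ blows up. I would avoid dominated convergence by using a multiplicative comparison. If $\Phi(x)=\infty$ there is nothing to prove. Otherwise, define
\begin{align}
  \varepsilon_k\coloneqq\max\Set{0,\max_{j:\,x[j]>0}\left(1-\frac{x_k[j]}{x[j]}\right)}.
\end{align}
Then $\varepsilon_k\to 0$, and $x_k\geq(1-\varepsilon_k)x$ entrywise; coordinates with $x[j]=0$ satisfy this trivially because $x_k[j]\geq 0$. Since $a\geq 0$, it follows that $\inner{a}{x_k}\geq(1-\varepsilon_k)\inner{a}{x}$, and hence
\begin{align}
  -\log r(a,x_k)\leq-\log(1-\varepsilon_k)-\log r(a,x)
\end{align}
pointwise in $a$. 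Taking expectations yields $\Phi(x_k)\leq-\log(1-\varepsilon_k)+\Phi(x)$, so $\limsup_k\Phi(x_k)\leq\Phi(x)$.

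Combining the two semicontinuity bounds gives $\lim_k\Phi(x_k)=\Phi(x)$, and hence continuity of $f_{\text{Kel}}$ on $\Delta_m$.
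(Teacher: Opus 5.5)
Your proof is correct. Note that the paper gives no argument of its own for this lemma: it imports it from \citet{Vajda1993}, Lemma~5. Your proposal therefore supplies a self-contained proof where the paper has only a citation.

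Your structure suits the extended-valued setting of Assumption~\ref{assumption:BasicAssumptions}.
\begin{itemize}
\item Normalizing by $\inner{a}{\mathbf{1}_m}$ gives the integrable lower bound $-\log\inner{a}{\mathbf{1}_m}$ for the integrand on $\Delta_m$. This makes $f_{\text{Kel}}$ well defined in $(-\infty,\infty]$ and justifies Fatou's lemma for lower semicontinuity.
\item The upper bound comes not from domination but from coordinatewise monotonicity and log-homogeneity, namely $f_{\text{Kel}}(x_k)\leq f_{\text{Kel}}((1-\varepsilon_k)x)=f_{\text{Kel}}(x)-\log(1-\varepsilon_k)$. This is the same sandwich idea the paper uses in Appendix~\ref{appendix:ComparisonErrorMeasures} to bound the optimization error by the Thompson metric.
\end{itemize}

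Two small remarks.
\begin{itemize}
\item The quantity $-\log(1-\varepsilon_k)$ is finite only once $\varepsilon_k<1$. This holds for all sufficiently large $k$ because $\varepsilon_k\to 0$, and you should say so.
\item Your proof uses only the first and fourth conditions of Assumption~\ref{assumption:KellyDistribution}. This makes explicit that the gradient formulas and the condition $\mathbb{P}_{a\sim P}(a[j]>0)>0$ play no role in continuity.
\end{itemize}

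The citation buys the paper brevity. Your route buys transparency about exactly which hypotheses continuity rests on, plus a quantitative one-sided estimate near boundary points of finite value.
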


\begin{proofof}{Lemma~\ref{lemma:KellyCurvature}}

   The log-homogeneity of $f_{\text{Kel}}$ follows directly from the homogeneity of the mapping $x\mapsto \inner{a}{x}$.

    The negativity of the gradient $\nabla f_{\text{Kel}}(x)$ on $\mathbb{R}_{++}^m$ directly follows from Assumption~\ref{assumption:KellyDistribution}.

    The continuity of $f_{\text{Kel}}$ on $\Delta_m$ follows from Lemma~\ref{lemma:KellyContinuity}.
    By log-homogeneity, we have
    $f_{\text{Kel}}(x)=f_{\text{Kel}}(x/\norm{x}_1)-\log(\norm{x}_1)$,
    so this continuity extends to $\mathbb{R}_{+}^m\setminus\Set{0}$.

    It remains to verify the second-order characterization~\eqref{ineq:IntroHessianBounds}.
    For any $x\in\mathbb{R}_{++}^m$ and $v\in\mathbb{R}^m$, we have
    \begin{align}
        \inner{\nabla f_{\text{Kel}}(x)}{v}^2
        &=\left(\mathbb{E}_{a\sim P}\left[\frac{\inner{a}{v}}{\inner{a}{x}}\right]\right)^2
        \leq\mathbb{E}_{a\sim P}\left[\frac{\inner{a}{v}^2}{\inner{a}{x}^2}\right]
        =v^{\top}\nabla^2 f_{\text{Kel}}(x)v,
    \end{align}
    where the inequality follows from Jensen's inequality.
    This implies that, for all $x\in\mathbb{R}_{++}^m$, we have $\nabla^2 f_{\text{Kel}}(x) \geq \nabla f_{\text{Kel}}(x)\nabla f_{\text{Kel}}(x)^{\top}$.

    On the other hand, by the Cauchy-Schwarz inequality,
    $\inner{a}{v}^2\leq\inner{a}{x}\inner{a\odot x^{-1}}{v^2}$, so
    \begin{align}
        v^{\top}\nabla^2 f_{\text{Kel}}(x)v
        &\leq\mathbb{E}_{a\sim P}\left[\frac{\inner{a\odot x^{-1}}{v^2}}{\inner{a}{x}}\right]
        =v^{\top}\mathrm{Diag}(-\nabla f_{\text{Kel}}(x)\oslash x)v.
    \end{align}
    This implies that $\nabla^2 f_{\text{Kel}}(x) \leq \mathrm{Diag}(-\nabla f_{\text{Kel}}(x)\oslash x)$ for all $x\in\mathbb{R}_{++}^m$.
    This concludes the proof.
\end{proofof}

\subsection{Application to D-Optimal Design}
\label{appendix:DOptimalDesign}
The D-optimal design problem is formulated as follows \citep{Kiefer1960,Atwood1969}:
\begin{align}
    \label{def:Dopt}
    x_{\text{Des},\star}
    \in \argmin_{x\in\Delta_m}f_{\text{Des}}(x),
    \quad f_{\text{Des}}(x) 
    \coloneqq -\frac{1}{n}\log(\det(A^{\top}\mathrm{Diag}(x)A)),
\end{align}
where $A\in\mathbb{R}^{ m\times n}$.
It aims at maximizing the determinant of the Fisher information matrix.
Without loss of generality, by an argument similar to that in Section~\ref{subsec:LewisWeights}, we assume that $A$ has full column rank and has no zero rows.

The following lemma shows that our framework applies to the optimization problem~\eqref{def:Dopt}.
\begin{lemma}
    \label{lemma:DoptCurvature}
    The function $f_{\text{Des}}(x)$ in the optimization problem~\eqref{def:Dopt} satisfies Assumption~\ref{assumption:BasicAssumptions} and the second-order characterization~\eqref{ineq:IntroHessianBounds} with $(\theta,\mu,L)=(-1,0,1)$.
\end{lemma}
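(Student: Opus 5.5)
We verify the hypotheses of Assumption~\ref{assumption:BasicAssumptions} directly and then reduce the second-order characterization to a diagonal Hessian bound via Lemma~\ref{lemma:RemoveRankOne}, exactly as in the Kelly and Fisher cases.

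\textbf{Basic properties.} Let $S(x)\coloneqq A^{\top}\mathrm{Diag}(x)A=\sum_{j}x[j]A[j]A[j]^{\top}$. Since $\det(S(rx))=r^n\det(S(x))$, we get $f_{\text{Des}}(rx)=f_{\text{Des}}(x)-\log r$, so $f_{\text{Des}}$ is $(-1)$-log-homogeneous. Continuity on $\mathbb{R}_{+}^m\setminus\Set{0}$ is understood with the value $+\infty$ where $S(x)$ is singular. This uses continuity of $\det$ together with $-\log(u)\to+\infty$ as $u\downarrow0$; I would state it as continuity into $(-\infty,\infty]$, as the paper implicitly does for Lewis weights. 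For $x\in\mathbb{R}_{++}^m$, full column rank of $A$ gives $S(x)\succ0$. The gradient is
\begin{align}
\nabla f_{\text{Des}}(x)[j]=-\tfrac1n A[j]^{\top}S(x)^{-1}A[j],
\end{align}
which is strictly negative because $A$ has no zero rows. Hence $-\nabla f_{\text{Des}}(x)>0$.

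\textbf{Hessian bounds.} Write $B\coloneqq S(x)^{-1/2}A^{\top}\mathrm{Diag}(x)^{1/2}$ and $P\coloneqq B^{\top}B=\mathrm{Diag}(x)^{1/2}AS(x)^{-1}A^{\top}\mathrm{Diag}(x)^{1/2}$, which is an orthogonal projection. Direct differentiation gives
\begin{align}
\nabla^2 f_{\text{Des}}(x)=\tfrac1n\bigl(AS(x)^{-1}A^{\top}\bigr)^{\odot 2},
\end{align}
where $\odot 2$ is the entrywise square. It therefore suffices to show
\begin{align}
0\le \nabla^2 f_{\text{Des}}\le \mathrm{Diag}(-\nabla f_{\text{Des}}\oslash x)=\tfrac1n\mathrm{Diag}\bigl(\mathrm{diag}(AS^{-1}A^{\top})\oslash x\bigr).
\end{align}
Conjugating both sides by $\mathrm{Diag}(x)^{1/2}$, this becomes $0\le P\odot P\le \mathrm{Diag}(\mathrm{diag}(P))$. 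The lower bound follows from the Schur product theorem. For the upper bound, use $P=P^2$ to get $P[i][i]=\sum_k P[i][k]^2$. Then $\mathrm{Diag}(\mathrm{diag} P)-P\odot P$ is a symmetric matrix whose diagonal dominates the sum of absolute off-diagonal entries in each row, since $P[i][i]-P[i][i]^2=\sum_{k\ne i}P[i][k]^2$. It is therefore diagonally dominant and hence PSD by Gershgorin.

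This yields $0\cdot\mathrm{Diag}(-\nabla f\oslash x)\le\nabla^2 f\le 1\cdot\mathrm{Diag}(-\nabla f\oslash x)$. Lemma~\ref{lemma:RemoveRankOne} then gives the characterization~\eqref{ineq:IntroHessianBounds} with $(\theta,\mu,L)=(-1,0,1)$. An alternative is to note that $f_{\text{Des}}$ is the Lewis-weight objective with $p=2$ formally, but the direct computation above is cleaner.

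\textbf{Main obstacle.} The main obstacle is the Hessian upper bound. The projection identity $P=P^2$ combined with diagonal dominance is the key step, and I expect it to go through as described. The remaining care is in the boundary continuity convention.
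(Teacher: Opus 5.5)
Your proof is correct and has the same skeleton as the paper's. Both establish log-homogeneity from $\det(S(rx))=r^n\det(S(x))$, continuity via $\det$ and $\log$, and strict negativity of the explicit gradient, and both then apply Lemma~\ref{lemma:RemoveRankOne} to the sandwich $0\leq\nabla^2 f_{\text{Des}}(x)\leq\mathrm{Diag}(-\nabla f_{\text{Des}}(x)\oslash x)$. For continuity you are more explicit than the paper about the value $+\infty$ where $S(x)$ is singular.

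The only genuine difference is in how that sandwich is obtained. The paper does not prove it; it imports it from \citet{Lu2018b} as Lemma~\ref{lemma:KnownHessianBoundsDes}. You derive it from scratch, working with $K=AS(x)^{-1}A^{\top}$ and the orthogonal projection $P$. The lower bound comes from the Schur product theorem. For the upper bound, the identity $P=P^2$ makes $\mathrm{Diag}(\mathrm{diag}(P))-P\odot P$ a symmetric matrix with nonpositive off-diagonal entries and vanishing row sums, i.e., a weighted graph Laplacian, hence positive semidefinite. This makes the lemma self-contained and elementary at the cost of a few lines, whereas the paper's citation is shorter.

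There are two small slips, neither load-bearing. First, the congruence taking $K^{\odot 2}\leq\mathrm{Diag}(\mathrm{diag}(K)\oslash x)$ to $P\odot P\leq\mathrm{Diag}(\mathrm{diag}(P))$ is by $\mathrm{Diag}(x)$, not $\mathrm{Diag}(x)^{1/2}$, because $(P\odot P)[i][j]=x[i]x[j]K[i][j]^2$. Second, $f_{\text{Des}}$ corresponds formally to the limit $p\to\infty$ of $f_{\text{Lew}}$, not to $p=2$. In that limit $M(x)\to A^{\top}\mathrm{Diag}(x)A$, the prefactor tends to $-1/n$, and $(\mu,L)\to(0,1)$.
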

The proof of Lemma~\ref{lemma:DoptCurvature} is provided in Appendix~\ref{appendix:ProofDoptCurvature}.

\begin{lemma}[{[\citealp{Cohen2019}, Equation~(5)]}]
    \label{lemma:DoptSmallEntry}
    We have
    \begin{align}
        x_{\text{Des},\star}[j]
        \leq \frac{1}{n},\quad \text{for all } 1\leq j\leq m.
    \end{align}
\end{lemma}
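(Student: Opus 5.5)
The plan is to combine the first-order optimality conditions of~\eqref{def:Dopt} over $\Delta_m$ with the elementary fact that a leverage score never exceeds one. Write $M(x)\coloneqq A^{\top}\mathrm{Diag}(x)A$, and let $A[j]$ denote the $j$th row of $A$, as in Section~\ref{subsec:LewisWeights}.

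\textbf{Step 1: $M(x_{\text{Des},\star})$ is invertible.} Since $A$ has full column rank, $M(\mathbf{1}_m/m)$ is positive definite. Hence $f_{\text{Des}}(\mathbf{1}_m/m)<\infty$, and so $f_{\text{Des}}(x_{\text{Des},\star})<\infty$. This forces $\det M(x_{\text{Des},\star})>0$. Because $x\mapsto\det M(x)$ is continuous on all of $\mathbb{R}^m$, the function $f_{\text{Des}}$ is finite and smooth on an open neighborhood of $x_{\text{Des},\star}$ in $\mathbb{R}^m$. This holds even if $x_{\text{Des},\star}$ lies on the boundary of the simplex.

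\textbf{Step 2: KKT conditions and the multiplier.} On this neighborhood,
\begin{align}
    -\nabla f_{\text{Des}}(x)[j]=\frac{1}{n}\inner{M(x)^{-1}A[j]}{A[j]}.
\end{align}
The KKT conditions for minimizing over $\Delta_m$ then give a scalar $\lambda$ such that $\nabla f_{\text{Des}}(x_{\text{Des},\star})[j]=\lambda$ whenever $x_{\text{Des},\star}[j]>0$, and $\nabla f_{\text{Des}}(x_{\text{Des},\star})[j]\geq\lambda$ otherwise. Multiplying by $x_{\text{Des},\star}[j]$ and summing, the left side equals $-1$ by the Euler identity (Lemma~\ref{lemma:SelfNormalize} with $\theta=-1$). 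The right side equals $\lambda$, since the coordinates of $x_{\text{Des},\star}$ sum to one. Therefore $\lambda=-1$. The Euler identity is valid here because $M$ is linear in $x$, so $\inner{\nabla f_{\text{Des}}(x)}{x}=-\frac{1}{n}\mathrm{Tr}(M(x)^{-1}M(x))=-1$ wherever $M(x)\succ 0$. Consequently, every $j$ with $x_{\text{Des},\star}[j]>0$ satisfies
\begin{align}
    \inner{M(x_{\text{Des},\star})^{-1}A[j]}{A[j]}=n.
\end{align}

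\textbf{Step 3: leverage-score bound.} Since $x_{\text{Des},\star}\geq 0$, we have $x_{\text{Des},\star}[j]A[j]A[j]^{\top}\leq M(x_{\text{Des},\star})$. Conjugating by $M(x_{\text{Des},\star})^{-1/2}$ and taking the trace of the resulting rank-one matrix gives
\begin{align}
    x_{\text{Des},\star}[j]\inner{M(x_{\text{Des},\star})^{-1}A[j]}{A[j]}\leq 1.
\end{align}
For coordinates with $x_{\text{Des},\star}[j]>0$, substituting the value $n$ from Step 2 yields $x_{\text{Des},\star}[j]\leq 1/n$. Coordinates with $x_{\text{Des},\star}[j]=0$ satisfy the bound trivially.

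\textbf{Main obstacle.} The only delicate point is justifying the KKT conditions when $x_{\text{Des},\star}$ lies on the boundary of the simplex. Step 1 handles this: it shows that $f_{\text{Des}}$ is differentiable on an open set containing $x_{\text{Des},\star}$, so the standard first-order conditions for minimizing a differentiable function over the polyhedron $\Delta_m$ apply directly.
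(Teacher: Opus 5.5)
Your proof is correct. The paper gives no proof of this lemma; it simply cites \citet{Cohen2019}, Equation~(5). So your argument is a self-contained replacement for that citation rather than a competing route. It is the classical Kiefer--Wolfowitz-type derivation, with your $M(x)=A^{\top}\mathrm{Diag}(x)A$:
\begin{enumerate}[label=(\roman*)]
\item Finiteness of $f_{\text{Des}}(x_{\text{Des},\star})$ makes $M(x_{\text{Des},\star})$ positive definite, so first-order optimality over $\Delta_m$ applies even at a boundary minimizer.
\item The Euler identity fixes the multiplier at $-1$, which gives $\inner{M(x_{\text{Des},\star})^{-1}A[j]}{A[j]}=n$ on the support.
\item The rank-one leverage-score bound $x_{\text{Des},\star}[j]\inner{M(x_{\text{Des},\star})^{-1}A[j]}{A[j]}\leq 1$ then yields $x_{\text{Des},\star}[j]\leq 1/n$.
\end{enumerate}
You were right to recompute the Euler identity directly at $x_{\text{Des},\star}$ rather than rely on Lemma~\ref{lemma:SelfNormalize}, which the paper states only on $\mathbb{R}_{++}^m$. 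The inequality half of your KKT conditions also gives $-\nabla f_{\text{Des}}(x_{\text{Des},\star})\leq\mathbf{1}_m$, with equality on the support; this is the D-optimal counterpart of Lemma~\ref{lemma:FixPtProp}.

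The bound can also be extracted from the paper's own machinery. The proof of Lemma~\ref{lemma:EquivalenceOpt} (case $\theta=-1$, $L\leq 1$) shows that $x_{\text{Des},\star}=\lim_{k\to\infty}-x_{\star,k}\odot\nabla f_{\text{Des}}(x_{\star,k})$. The $j$th entry of each term is $\frac{1}{n}x_{\star,k}[j]\inner{M(x_{\star,k})^{-1}A[j]}{A[j]}\leq\frac{1}{n}$, by the same leverage-score bound. Your direct KKT route is shorter: it avoids both that limiting argument and the convexity of $\varphi$ on which it rests.
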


Combining Lemma~\ref{lemma:DoptCurvature}, Lemma~\ref{lemma:DoptSmallEntry}, and Theorem~\ref{theorem:ConvergenceRateMGM}, we obtain the following corollary,
which recovers the convergence rate for the MGM~\eqref{alg:IntroMGM} with $\eta=1$ established by \citet{Cohen2019} for solving the optimization problem~\eqref{def:Dopt}.
\begin{corollary}
    \label{cor:DoptConv}
    Let $x_t$ be the sequence generated by the MGM~\eqref{alg:IntroMGM} for solving the optimization problem~\eqref{def:Dopt} with  $\eta=1$, $\theta=-1$, and $x_1=\mathbf{1}_m/m$.
    Then, we have
    \begin{align}
        f_{\text{Des}}(x_{T+1})-f_{\text{Des}}(x_{\text{Des},\star})
        \leq \frac{\log\left(\frac{m}{n}\right)}{T}.
    \end{align}
\end{corollary}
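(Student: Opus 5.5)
This corollary follows directly from Theorem~\ref{theorem:ConvergenceRateMGM} in the case $\theta=-1$, $0\leq\mu\leq L\leq 1$, with $(\mu,L)=(0,1)$ supplied by Lemma~\ref{lemma:DoptCurvature}. With $\eta=1/L=1$ the theorem gives
\begin{align}
  f_{\text{Des}}(x_{T+1})-f_{\text{Des}}(x_{\text{Des},\star})\leq \frac{1}{T}\bigl(K_1-O_1\bigr)\leq \frac{1}{T}\RenyiKL{x_{\text{Des},\star}}{x_1},
\end{align}
where the second inequality drops $O_1=f_{\text{Des}}(x_1)-f_{\text{Des}}(x_{\text{Des},\star})$. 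This is legitimate because $x_1\in\Delta_m$ and $x_{\text{Des},\star}$ minimizes over $\Delta_m$, so $O_1\geq 0$. The initialization $x_1=\mathbf{1}_m/m$ lies in $\mathrm{ri}(\Delta_m)$, as the theorem requires.

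It remains to bound the KL term. With $x_1=\mathbf{1}_m/m$,
\begin{align}
  \RenyiKL{x_{\text{Des},\star}}{\mathbf{1}_m/m}=\log(m)+\inner{x_{\text{Des},\star}}{\log(x_{\text{Des},\star})}.
\end{align}
By Lemma~\ref{lemma:DoptSmallEntry}, every entry of $x_{\text{Des},\star}$ is at most $1/n$. Hence $\log(x_{\text{Des},\star}[j])\leq -\log(n)$ wherever $x_{\text{Des},\star}[j]>0$, and zero entries contribute nothing under the convention $0\log 0=0$. Since the entries sum to one, $\inner{x_{\text{Des},\star}}{\log(x_{\text{Des},\star})}\leq -\log(n)$. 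The KL term is therefore at most $\log(m/n)$, which gives the claim.

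I expect no real obstacle here; the only care needed is the sign of $O_1$ and the treatment of zero entries of the minimizer. Lemma~\ref{lemma:DoptCurvature} itself is assumed, since its proof is deferred elsewhere.
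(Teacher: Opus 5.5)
Your proposal is correct and is exactly the argument the paper intends (it simply combines Lemma~\ref{lemma:DoptCurvature}, Lemma~\ref{lemma:DoptSmallEntry}, and Theorem~\ref{theorem:ConvergenceRateMGM}). You apply the theorem with $(\theta,\mu,L)=(-1,0,1)$, drop $O_1\geq 0$, and bound $K_1=\log(m)+\inner{x_{\text{Des},\star}}{\log(x_{\text{Des},\star})}\leq\log(m/n)$ using the $1/n$ entry bound, and your treatment of zero entries and of the sign of $O_1$ fills in details the paper leaves implicit.
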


\subsubsection{Proof of Lemma~\ref{lemma:DoptCurvature}}
\label{appendix:ProofDoptCurvature}
The proof of Lemma~\ref{lemma:DoptCurvature} is based on Lemma~\ref{lemma:KnownHessianBoundsDes}.

\begin{lemma}[{[\citealp{Lu2018b}, Equation~(17)]}]
    \label{lemma:KnownHessianBoundsDes}
    The objective function in the optimization problem~\eqref{def:Dopt} satisfies
    \begin{align}
        0\leq
        \nabla^2 f_{\text{Des}}(x)
        \leq \mathrm{Diag}(-\nabla f_{\text{Des}}(x)\oslash x),\quad \text{for all } x\in\mathbb{R}_{++}^m.
    \end{align}
\end{lemma}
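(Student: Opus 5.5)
The plan is a direct computation of the gradient and Hessian, followed by two matrix inequalities: the Schur product theorem for the lower bound and an AM--GM argument exploiting a projection identity for the upper bound.

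\emph{Derivatives.} Fix $x\in\mathbb{R}_{++}^m$. Write $M\coloneqq A^{\top}\mathrm{Diag}(x)A$, which is positive definite because $A$ has full column rank. Let $a_j\in\mathbb{R}^n$ denote the $j^{\text{th}}$ row of $A$, and set $B\coloneqq AM^{-1}A^{\top}\in\mathbb{R}^{m\times m}$. Using $\partial_{x[j]}\log\det M=\mathrm{Tr}(M^{-1}a_ja_j^{\top})$ and $\partial_{x[k]}M^{-1}=-M^{-1}a_ka_k^{\top}M^{-1}$, we get
\begin{align}
\nabla f_{\text{Des}}(x)[j]=-\frac{1}{n}B[j][j],\qquad \nabla^2 f_{\text{Des}}(x)[j][k]=\frac{1}{n}B[j][k]^2 .
\end{align}
In other words, $\nabla^2 f_{\text{Des}}(x)=\frac1n B\odot B$ (Hadamard square) and $\mathrm{Diag}(-\nabla f_{\text{Des}}(x)\oslash x)=\frac1n\mathrm{Diag}\bigl(B[j][j]/x[j]\bigr)$.

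\emph{Lower bound.} The matrix $B$ is positive semidefinite, so $B\odot B\geq 0$ by the Schur product theorem. This gives $\nabla^2 f_{\text{Des}}(x)\geq 0$.

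\emph{Upper bound, reformulation.} Let $D\coloneqq\mathrm{Diag}(x)$ and $P\coloneqq D^{1/2}AM^{-1}A^{\top}D^{1/2}$. Then $P$ is symmetric and satisfies $P^2=P$, so it is an orthogonal projection. Moreover $B=D^{-1/2}PD^{-1/2}$, so $B[j][k]^2=P[j][k]^2/(x[j]x[k])$. For $v\in\mathbb{R}^m$, put $w\coloneqq v\oslash x$. Then
\begin{align}
v^{\top}(B\odot B)v=\sum_{j,k}P[j][k]^2w[j]w[k],\qquad \sum_j \frac{B[j][j]}{x[j]}v[j]^2=\sum_j P[j][j]\,w[j]^2 .
\end{align}
So the upper bound reduces to showing $P\odot P\leq\mathrm{Diag}(P[1][1],\dots,P[m][m])$.

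\emph{Upper bound, key step.} This is the step that needs the projection structure. From $P=P^2=PP^{\top}$ we have $P[j][j]=\sum_k P[j][k]^2$. Applying $w[j]w[k]\leq (w[j]^2+w[k]^2)/2$ termwise and using the symmetry of $P$,
\begin{align}
\sum_{j,k}P[j][k]^2w[j]w[k]\leq\sum_{j}w[j]^2\sum_kP[j][k]^2=\sum_j P[j][j]\,w[j]^2 .
\end{align}
Multiplying through by $1/n$ yields $\nabla^2 f_{\text{Des}}(x)\leq\mathrm{Diag}(-\nabla f_{\text{Des}}(x)\oslash x)$, which completes the argument.
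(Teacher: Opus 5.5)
Your proof is correct and complete. The formulas $\nabla^2 f_{\text{Des}}(x)=\frac1n B\odot B$ and $-\nabla f_{\text{Des}}(x)[j]=\frac1n B[j][j]$ are right, and the Schur product theorem gives the lower bound. Conjugating by $\mathrm{Diag}(x)$ reduces the upper bound exactly to $P\odot P\leq\mathrm{Diag}(P[1][1],\dots,P[m][m])$ for the orthogonal projection $P$. Your AM--GM step, using $P[j][j]=\sum_k P[j][k]^2$, then settles it.

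The paper gives no argument of its own here; it simply imports the bound from \citet{Lu2018b}. So there is no in-paper proof to match, and your write-up supplies a self-contained one. A common compact form of the same computation writes the quadratic form as $v^{\top}\nabla^2 f_{\text{Des}}(x)v=\frac1n\mathrm{Tr}(PWPW)$ with $W=\mathrm{Diag}(v\oslash x)$. It then uses $P\leq I_m$, or equivalently Cauchy--Schwarz: $\mathrm{Tr}(PWPW)\leq\|PW\|_F^2=\mathrm{Tr}(PW^2)=\sum_j P[j][j]\,w[j]^2$. Your termwise AM--GM is precisely the entrywise version of this Cauchy--Schwarz step, so the two routes are interchangeable. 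Yours is fully elementary; the trace version is shorter and avoids index bookkeeping.

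Either form yields, after the extra observation $P[j][j]\leq 1$, the weaker relative-smoothness bound with respect to the log-barrier $-\sum_j\log x[j]$. The paper's statement keeps the sharper diagonal $P[j][j]$, which your argument delivers directly.
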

\begin{proofof}{Lemma~\ref{lemma:DoptCurvature}}
    The log-homogeneity of $f_{\text{Des}}$ follows directly from the homogeneity of the determinant.

    The continuity of $f_{\text{Des}}(x)$ follows from the continuity of the logarithm and the determinant.

    To prove that $-\nabla f_{\text{Des}}(x)>0$ for all $x\in\mathbb{R}_{++}^m$, we compute
    \begin{align}
        -\nabla f_{\text{Des}}(x)[j]
        = \frac{1}{n}A[j]^{\top}(A^{\top}\mathrm{Diag}(x)A)^{-1}A[j]
        >0,\quad \text{for all } 1\leq j\leq m.
    \end{align}
    This is positive due to the facts that $x[j] > 0$, $(A^{\top}\mathrm{Diag}(x)A)^{-1}>0$, and $A[j] \neq 0$.

    The proof then concludes by combining Lemma~\ref{lemma:KnownHessianBoundsDes} with Lemma~\ref{lemma:RemoveRankOne}.
\end{proofof}





\subsection{Application to Lewis Weight }
\label{appendix:ApplicationLewisWeight}
We prove Lemma~\ref{lemma:HessianBoundsLewis}, Corollary~\ref{cor:MGMMultiplicativeEstimateConv}, and Lemma~\ref{lemma:OptError2LogDetDiv} in Appendix~\ref{appendix:ProofHessianBoundsLewis}, Appendix~\ref{appendix:ProofMGMMultiplicativeEstimateConv}, and Appendix~\ref{appendix:ProofOptError2LogDetDiv}, respectively.
The first two results are stated in
Section~\ref{subsec:LewisWeights}, where we apply our framework to the $\ell_p$-Lewis weight problem.
The proof of Corollary~\ref{cor:MGMMultiplicativeEstimateConv} is based on Lemma~\ref{lemma:OptError2LogDetDiv} and Lemma~\ref{lemma:HessianBoundsLewis}.

\subsubsection{Proof of Lemma~\ref{lemma:HessianBoundsLewis}}
\label{appendix:ProofHessianBoundsLewis}



\begin{proof}
    The log-homogeneity of $f_{\text{Lew}}$ follows directly from the homogeneity of the mapping $x\mapsto x^{1-2/p}$ and that of the determinant.

    The continuity for the case $p>2$ follows from the continuity of the mapping $x\mapsto x^{1-2/p}$, the continuity of the determinant, and the continuity of the logarithm.
    Continuity in the case $p< 2$ follows from the fact that the function values along sequences in $\mathbb{R}_{++}^m$ converging to the boundary of $\mathbb{R}_{++}^m$ converge to $\infty$.

    To prove that $-\nabla f_{\text{Lew}}(x)>0$ for all $x\in\mathbb{R}_{++}^m$, we compute
    \begin{align}
        -\nabla f_{\text{Lew}}(x)[j]
        = \frac{1}{n}x[j]^{-2/p}\inner{M(x)^{-1}A[j]}{A[j]}.
    \end{align}
    This is positive due to the facts that $x[j] > 0$, $M(x)^{-1}>0$, and $A[j] \neq 0$.

    The proof then concludes by combining Lemma~\ref{lemma:KnownHessianBoundsLewis} with Lemma~\ref{lemma:RemoveRankOne}.
\end{proof}

\subsubsection{Proof of Corollary~\ref{cor:MGMMultiplicativeEstimateConv}}
\label{appendix:ProofMGMMultiplicativeEstimateConv}
The proof of Corollary~\ref{cor:MGMMultiplicativeEstimateConv} is based on Lemma~\ref{lemma:FixPtProp}, Lemma~\ref{lemma:OptError2LogDetDiv}, Lemma~\ref{lemma:LogDetDiv2Lowner}, and Lemma~\ref{lemma:NesterovLogIneq}.

\begin{lemma}[{[\citealp{Lee2020}, Definition~20, Lemma~22]}]
    \label{lemma:FixPtProp}
    For each coordinate $j$, we have
    \begin{align}
        -\nabla {f}_{\text{Lew}}(x_{\text{Lew},\star})[j]
        =\frac{1}{n}\mathrm{Tr}\left(M(x_{\text{Lew},\star})^{-1}A_j\right)x_{\text{Lew},\star}[j]^{-2/p}
        =1,\nonumber
    \end{align}
    where ${f}_{\text{Lew}}$ and $x_{\text{Lew},\star}$ are defined in the optimization problem~\eqref{def:Lewis} and $A_j\coloneqq A[j]A[j]^{\top}$.
\end{lemma}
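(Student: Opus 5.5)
The plan is to read off both equalities of Lemma~\ref{lemma:FixPtProp} from the first-order optimality conditions of the problem~\eqref{def:Lewis}. The middle expression is just the gradient formula. Differentiating $\log\det M(x)$ with $M(x)=\sum_j x[j]^{1-2/p}A_j$ gives
\begin{align}
    \partial_j \log\det M(x)=(1-2/p)\,x[j]^{-2/p}\,\mathrm{Tr}\left(M(x)^{-1}A_j\right).
\end{align}
Multiplying by $p/(n(2-p))$ produces $-\frac{1}{n}x[j]^{-2/p}\mathrm{Tr}(M(x)^{-1}A_j)$. This matches the gradient formula already used in the proof of Lemma~\ref{lemma:HessianBoundsLewis}, so the first equality holds wherever $x_{\text{Lew},\star}$ has positive entries.

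For the second equality, I would first show $x_{\text{Lew},\star}\in\mathrm{ri}(\Delta_m)$, so that the KKT conditions on the simplex force $\nabla f_{\text{Lew}}(x_{\text{Lew},\star})=\lambda\mathbf{1}_m$ for some scalar $\lambda$. Lemma~\ref{lemma:SelfNormalize} with $\theta=-1$ gives $\inner{\nabla f_{\text{Lew}}(x)}{x}=-1$. Taking the inner product of $\lambda\mathbf{1}_m$ with $x_{\text{Lew},\star}\in\Delta_m$ then gives $\lambda=-1$, so every coordinate of $-\nabla f_{\text{Lew}}(x_{\text{Lew},\star})$ equals $1$.

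The main obstacle is interiority. I would split into two cases.

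\emph{Case $p>2$.} Here $\mu=1$ and $L=2/p\cdot$, i.e., by Lemma~\ref{lemma:HessianBoundsLewis} we have $\theta=-1$, $\mu=1$, $L=p/2\cdot(2/p)^{-1}$; concretely $\mu\geq1$ and $L>1$. The proof of Lemma~\ref{lemma:EquivalenceOpt} for this case already shows that $f=\infty$ on the relative boundary, so the minimizer is interior.

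\emph{Case $p<2$.} Here $L=2/p>1$ and $\mu=1$, so the same argument applies. Directly, the exponent $1-2/p$ is negative, so $x[j]^{1-2/p}\to\infty$ as $x[j]\to0$. Since $\det M$ is increasing in the Loewner order, $f_{\text{Lew}}\to\infty$ near the boundary.

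For the case $p>2$ I would also double-check the sign directly. The coefficient $p/(n(2-p))$ is negative and $x[j]^{1-2/p}\to0$ at the boundary. Whether $\det M\to0$ depends on the remaining rows still spanning $\mathbb{R}^n$, so the direct route is unreliable. I would therefore rely on the already-proved argument from Lemma~\ref{lemma:EquivalenceOpt}, namely the lower bound $f(y)\geq \text{const}+\inner{c}{-\log y}$ with $c>0$.

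If that route fails for some reason, there is a fallback: use KKT with multipliers for $x[j]\geq0$. At a boundary coordinate, the partial derivative of $f_{\text{Lew}}$ behaves like $x[j]^{-2/p}\to-\infty$, which contradicts optimality. This is because moving mass onto such a coordinate decreases $f$ at an infinite rate.
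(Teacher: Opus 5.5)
Your reduction is right. The gradient formula gives the first equality at every point of $\mathbb{R}_{++}^m$. At a minimizer in $\mathrm{ri}(\Delta_m)$, the condition $\nabla f_{\text{Lew}}(x_{\text{Lew},\star})=\lambda\mathbf{1}_m$ together with Lemma~\ref{lemma:SelfNormalize} forces $\lambda=-1$. Your treatment of $p<2$, where $(\mu,L)=(1,2/p)$, is also fine.

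The gap is the interiority argument for $p>2$, which rests on a misreading of Lemma~\ref{lemma:HessianBoundsLewis}. For $p>2$ that lemma gives $(\mu,L)=(2/p,1)$, which is the branch $\theta=-1$, $L\leq 1$, $c_f=-1$. In that branch, convexity of $\varphi(y)=-f(\exp(y))$ yields only the \emph{upper} bound \eqref{ineq:Case1PhiCvxFirstOrder}. The lower bound $f(y)\geq\mathrm{const}+\inner{c}{-\log(y)}$ in the proof of Lemma~\ref{lemma:EquivalenceOpt} belongs to the case $\mu\geq 1$, $L>1$, i.e.\ to $p<2$ only. For $p>2$ it is false, since it would force $f_{\text{Lew}}=+\infty$ on the whole relative boundary. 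For example, with $n=1$, $A=\mathbf{1}_3$ and $p=4$, one has $f_{\text{Lew}}(x)=-2\log\bigl(\sum_{j=1}^3\sqrt{x[j]}\bigr)$, which equals $-\log 2$ at $(1/2,1/2,0)$. Your own ``direct'' check was already pointing at this.

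What works for $p>2$ is the idea in your fallback. It should be the main argument, and it has to be phrased as a one-sided estimate: $f_{\text{Lew}}$ is not differentiable on the boundary, so KKT cannot be invoked there.

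First, if $x\in\Delta_m$ has $M(x)$ singular, then $f_{\text{Lew}}(x)=+\infty$ because the prefactor $p/(n(2-p))$ is negative. So $x$ is not a minimizer.

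Second, suppose $M(x)$ is positive definite, $x[j]=0$ and $x[k]>0$. Then for small $t>0$,
\begin{align}
\log\det M(x+t(e_j-e_k))=\log\det M(x)+t^{1-2/p}\inner{M(x)^{-1}A[j]}{A[j]}+o\bigl(t^{1-2/p}\bigr).
\end{align}
This holds because the change in the $k$th term is $O(t)$ and $1-2/p\in(0,1)$. Since $A[j]\neq 0$, the leading coefficient is positive. Hence $\log\det M$ strictly increases and $f_{\text{Lew}}$ strictly decreases along this feasible direction, contradicting optimality.

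With this repair, your argument is a valid, self-contained derivation from first-order optimality and log-homogeneity. The paper does not prove the identity itself; it imports the fixed-point characterization of the Lewis weights from \citet{Lee2020}.
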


\begin{lemma}[{[\citealp{Gribling2026}, Lemma~20]}]
    \label{lemma:LogDetDiv2Lowner}
    For any $X\in\mathcal{H}_{n,++}$, if there exists $\nu\in[0,1]$ such that
    $D_{H}(X,I_n)\leq\nu\leq 0.1$,
    then
    \begin{align}
        (1-2\sqrt{\nu})I_n
        \leq  X
        \leq(1+2\sqrt{\nu})I_n,\nonumber
    \end{align}
    where $H(X)=-\log(\det(X))$.
\end{lemma}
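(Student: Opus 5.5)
The plan is a direct eigenvalue computation, with no earlier results needed. Since $\nabla H(X)=-X^{-1}$, we have $\nabla H(I_n)=-I_n$, so
\begin{align}
    D_H(X,I_n)=-\log\det(X)+\mathrm{Tr}(X-I_n)=\sum_{i=1}^n g(\lambda_i),\qquad g(\lambda)\coloneqq\lambda-1-\log(\lambda),
\end{align}
where $\lambda_1,\dots,\lambda_n>0$ are the eigenvalues of $X$. Each term satisfies $g(\lambda_i)\geq 0$, so the hypothesis $D_H(X,I_n)\leq\nu$ gives $g(\lambda_i)\leq\nu$ for every $i$. The claimed L\"owner bounds are equivalent to $\abs{\lambda_i-1}\leq 2\sqrt{\nu}$ for all $i$, so the lemma reduces to a scalar statement:
\begin{align}
    g(\lambda)\leq\nu\leq 0.1\quad\Longrightarrow\quad \abs{\lambda-1}\leq 2\sqrt{\nu}.
\end{align}

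To prove the scalar statement, use that $g$ is convex on $(0,\infty)$ with minimum $g(1)=0$. Hence $g$ is decreasing on $(0,1]$ and increasing on $[1,\infty)$, and the sublevel set $\Set{\lambda>0 : g(\lambda)\leq\nu}$ is an interval containing $1$. Set $s\coloneqq 2\sqrt{\nu}$. Since $\nu\leq 0.1$, we have $s\leq 2\sqrt{0.1}<0.64$, so $1-s>0$. It then suffices to verify the endpoint inequalities $g(1+s)\geq\nu$ and $g(1-s)\geq\nu$, because monotonicity on each side of $1$ confines the sublevel set to $[1-s,1+s]$.

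Both endpoint bounds follow from Taylor estimates of the logarithm.
\begin{itemize}
    \item Left endpoint: $g(1-s)=-s-\log(1-s)=\sum_{k\geq 2}s^k/k\geq s^2/2\geq s^2/4=\nu$.
    \item Right endpoint: the alternating-series bound $\log(1+s)\leq s-s^2/2+s^3/3$ gives
    \begin{align}
        g(1+s)=s-\log(1+s)\geq s^2\left(\tfrac12-\tfrac{s}{3}\right).
    \end{align}
    With $s<0.64$ we get $\tfrac12-\tfrac{s}{3}>0.28\geq\tfrac14$, so $g(1+s)\geq s^2/4=\nu$.
\end{itemize}

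The only delicate point is this right-endpoint estimate. The cubic correction term is exactly what forces a smallness condition on $\nu$, and the threshold $0.1$ leaves comfortable slack.
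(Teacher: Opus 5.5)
Your proof is correct and complete. The reduction $D_H(X,I_n)=\mathrm{Tr}(X)-n-\log\det(X)=\sum_i g(\lambda_i)$ is right, and since each summand is nonnegative, each $g(\lambda_i)\le\nu$. Because $g'(\lambda)=1-1/\lambda$, the function $g$ is strictly monotone on each side of $1$, which is what makes the endpoint checks sufficient. Both endpoint estimates hold with $s=2\sqrt{\nu}<0.64$. In particular, $\log(1+s)\le s-s^2/2+s^3/3$ is valid for all $s\ge 0$, since the difference has derivative $s^3/(1+s)\ge 0$.

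The paper gives no proof of this lemma; it imports it verbatim from \citet{Gribling2026}. So there is no in-paper argument to compare yours against, and your eigenvalue argument supplies a self-contained justification.

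One small remark. Your endpoint bounds could also be read off Lemma~\ref{lemma:NesterovLogIneq}, which the paper states alongside this lemma:
\begin{itemize}
\item $s-\log(1+s)\ge s^2/(2(1+s))\ge s^2/4$ whenever $s\le 1$;
\item $-s-\log(1-s)\ge s^2/(2-s)\ge s^2/2$ for $s\in[0,1)$.
\end{itemize}
This route shows the conclusion persists for $\nu$ up to $1/4$. Your cubic Taylor bound works up to $\nu\le 9/64$. Either way, the stated threshold $0.1$ has slack.
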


\begin{lemma}
    \label{lemma:OptError2LogDetDiv}
    For every $x\in \mathrm{ri}(\Delta_m)$, let $Y=M(x_{\text{Lew},\star})^{-1/2}M(x)M(x_{\text{Lew},\star})^{-1/2}$.
    Then, we have
    \begin{align}
        \label{eq:matrix-divergence}
        n\left(f_{\text{Lew}}(x)-f_{\text{Lew}}(x_{\text{Lew},\star})\right)
        \geq
        D_{H}(Y^{p/(p-2)},I_n)
        ,
        \quad \text{for all } p\in(2,\infty),
    \end{align}
    where $H(X)=-\log(\det(X))$.
\end{lemma}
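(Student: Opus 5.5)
The plan is to reduce the claimed inequality to a trace inequality for $Y$ and then prove that by a trace Jensen argument. Write $s\coloneqq p/(p-2)>1$ and $H(X)=-\log(\det(X))$, so that $D_H(Z,I_n)=-\log\det Z+\mathrm{Tr}(Z)-n$ for $Z\in\mathcal{H}_{n,++}$.

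From the definition~\eqref{def:Lewis}, $n f_{\text{Lew}}(x)=-s\log\det M(x)$. Hence
\begin{align}
n\left(f_{\text{Lew}}(x)-f_{\text{Lew}}(x_{\text{Lew},\star})\right)=-s\log\det Y=-\log\det\left(Y^{s}\right).
\end{align}
Comparing with $D_H(Y^s,I_n)=-\log\det(Y^s)+\mathrm{Tr}(Y^s)-n$, the claimed inequality~\eqref{eq:matrix-divergence} is equivalent to
\begin{align}
\mathrm{Tr}\left(Y^{s}\right)\leq n .
\end{align}
The rest of the argument proves this bound.

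Write $x_\star=x_{\text{Lew},\star}$ and $A_j=A[j]A[j]^{\top}$. Lemma~\ref{lemma:FixPtProp} gives $\mathrm{Tr}(M(x_\star)^{-1}A_j)=n\,x_\star[j]^{2/p}>0$, so every entry of $x_\star$ is positive. This positivity is used below when dividing by $x_\star[j]$.

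Define the positive semidefinite matrices
\begin{align}
C_j\coloneqq x_\star[j]^{1-2/p}M(x_\star)^{-1/2}A_jM(x_\star)^{-1/2},\qquad a_j\coloneqq\left(x[j]/x_\star[j]\right)^{1-2/p}.
\end{align}
These satisfy three properties.
\begin{enumerate}
\item By the definition of $M(x_\star)$, $\sum_j C_j=I_n$.
\item Also, $Y=\sum_j a_jC_j$.
\item By Lemma~\ref{lemma:FixPtProp}, $\mathrm{Tr}(C_j)=x_\star[j]^{1-2/p}\cdot n\,x_\star[j]^{2/p}=n\,x_\star[j]$.
\end{enumerate}
Thus $Y$ is a matrix convex combination $\sum_j K_j^{*}(a_jI)K_j$ with $K_j=C_j^{1/2}$ and $\sum_j K_j^{*}K_j=I_n$.

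The function $t\mapsto t^s$ is convex on $[0,\infty)$ but not operator convex for $s>2$. We therefore use the trace form of Jensen's operator inequality (Hansen--Pedersen), which needs only ordinary convexity:
\begin{align}
\mathrm{Tr}\,g\Big(\sum_j K_j^{*}X_jK_j\Big)\leq\mathrm{Tr}\sum_jK_j^{*}g(X_j)K_j .
\end{align}
Alternatively, for scalar $X_j$ one can argue directly. Expand in an eigenbasis $\{u\}$ of $Y$. The weights $\langle u,C_ju\rangle$ sum to $1$ over $j$, so scalar Jensen gives $\langle u,Yu\rangle^s\leq\sum_j a_j^s\langle u,C_ju\rangle$. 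Summing over $u$ gives the same trace bound. This step is the only genuine obstacle, and the eigenbasis argument sidesteps any operator-convexity issue.

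Applying the bound with $g(t)=t^s$ gives
\begin{align}
\mathrm{Tr}(Y^s)\leq\sum_j a_j^{s}\,\mathrm{Tr}(C_j)=\sum_j\frac{x[j]}{x_\star[j]}\,n\,x_\star[j]=n\sum_j x[j]=n .
\end{align}
Here we used $(1-2/p)\cdot p/(p-2)=1$ and $x\in\Delta_m$. This proves $\mathrm{Tr}(Y^s)\leq n$ and hence the lemma.
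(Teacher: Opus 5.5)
Your proof is correct and matches the paper's argument. The paper likewise reduces the claim to $\mathrm{Tr}(Y^{p/(p-2)})\leq n$, decomposes $Y$ with the weights $x_{\text{Lew},\star}[j]E_j$ (which equal your $C_j$), and then applies the trace Jensen inequality of Lemma~\ref{lemma:JensenTraceIneq} together with Lemma~\ref{lemma:FixPtProp}; your eigenbasis argument is a valid elementary substitute for the Hansen--Pedersen step, since each $X_j$ is a scalar multiple of $I_n$.
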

We defer the proof of Lemma~\ref{lemma:OptError2LogDetDiv} to Appendix~\ref{appendix:ProofOptError2LogDetDiv}.

\begin{lemma}[{[\citealp{Nesterov2018a}, Lemma~5.1.5]}]
    \label{lemma:NesterovLogIneq}
    For any $t\geq 0$, we have
    \begin{align}
        \frac{t^2}{2(1+t)}
        \leq
        t-\log(1+t)
        \leq \frac{t^2}{2+t}.\nonumber
    \end{align}
    On the other hand, for any $t\in[0,1)$, we have
    \begin{align}
        \frac{t^2}{2-t}
        \leq
        -t-\log(1-t)
        \leq \frac{t^2}{2(1-t)}.\nonumber
    \end{align}
\end{lemma}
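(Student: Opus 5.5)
The plan is an elementary monotonicity argument. For each of the four inequalities, I would form the difference of the two sides as a function of $t$ on the relevant interval. I would then check that it vanishes at $t=0$ and has a nonnegative derivative, which makes it nonnegative throughout. Nothing from earlier in the paper is needed. The only step requiring care is simplifying each derivative to a manifestly nonnegative expression; I expect every numerator to collapse to $t^2$ times a positive factor.

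\emph{Case $t\geq 0$.} For the lower bound, set
\begin{align}
g_1(t)=t-\log(1+t)-\frac{t^2}{2(1+t)}.
\end{align}
Then $g_1(0)=0$. Writing $\frac{d}{dt}\frac{t^2}{2(1+t)}=\frac{t^2+2t}{2(1+t)^2}$, I expect
\begin{align}
g_1'(t)=\frac{t}{1+t}-\frac{t^2+2t}{2(1+t)^2}=\frac{t^2}{2(1+t)^2}\geq 0 .
\end{align}
For the upper bound, set
\begin{align}
g_2(t)=\frac{t^2}{2+t}-t+\log(1+t).
\end{align}
Then $g_2(0)=0$, and
\begin{align}
g_2'(t)=\frac{t^2+4t}{(2+t)^2}-\frac{t}{1+t}=\frac{t\left[(t+4)(t+1)-(t+2)^2\right]}{(2+t)^2(1+t)}=\frac{t^2}{(2+t)^2(1+t)}\geq 0 .
\end{align}

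\emph{Case $t\in[0,1)$.} Both inequalities compare against $u(t)=-t-\log(1-t)$, which satisfies $u(0)=0$ and $u'(t)=t/(1-t)$. For the lower bound, since $\frac{d}{dt}\frac{t^2}{2-t}=\frac{4t-t^2}{(2-t)^2}$, the difference has derivative
\begin{align}
\frac{t}{1-t}-\frac{t(4-t)}{(2-t)^2}=\frac{t\left[(2-t)^2-(4-t)(1-t)\right]}{(1-t)(2-t)^2}=\frac{t^2}{(1-t)(2-t)^2}\geq 0 .
\end{align}
For the upper bound, since $\frac{d}{dt}\frac{t^2}{2(1-t)}=\frac{2t-t^2}{2(1-t)^2}$, the difference has derivative
\begin{align}
\frac{2t-t^2}{2(1-t)^2}-\frac{t}{1-t}=\frac{t^2}{2(1-t)^2}\geq 0 .
\end{align}
All four differences vanish at $0$ and are nondecreasing on their domains, which proves the lemma.
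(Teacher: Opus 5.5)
The paper gives no proof of this lemma; it quotes it from \citet{Nesterov2018a} and uses it as a black box in the proof of Corollary~\ref{cor:MGMMultiplicativeEstimateConv}. Your argument is a correct, self-contained replacement for that citation.

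All four derivative simplifications check out. The differences have derivatives $\frac{t^2}{2(1+t)^2}$, $\frac{t^2}{(2+t)^2(1+t)}$, $\frac{t^2}{(1-t)(2-t)^2}$, and $\frac{t^2}{2(1-t)^2}$, and each is nonnegative on its stated range. Every difference is smooth on $[0,\infty)$ or $[0,1)$ and vanishes at $t=0$, so the mean value theorem closes each case.

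Compared with the citation, your route costs a few lines but makes the appendix self-contained. It also verifies exactly the bounds the paper relies on there: the upper bound of the second pair at $t=\varepsilon/(2p)$, the upper bound of the first pair at $t=\varepsilon$, and the elementary $\log(1+t)\leq t$.

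If you want to shorten it, two of the four bounds follow in one line. Use the integral representations $t-\log(1+t)=\int_0^t\frac{\tau}{1+\tau}\,d\tau$ and $-t-\log(1-t)=\int_0^t\frac{\tau}{1-\tau}\,d\tau$, together with $\frac{\tau}{1+\tau}\geq\frac{\tau}{1+t}$ and $\frac{\tau}{1-\tau}\leq\frac{\tau}{1-t}$ for $0\leq\tau\leq t$. The other two bounds still need the derivative comparison you carried out.
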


\begin{proofof}{Corollary~\ref{cor:MGMMultiplicativeEstimateConv}}
    By Corollary~\ref{cor:LewisMGMOptErrorKLConv}, we have
    \begin{align}  
        n\left(f_{\text{Lew}}(x_{T+1})-f_{\text{Lew}}(x_{\text{Lew},\star})\right)
        &\leq \left(1-\frac{2}{p}\right)^Tn\log\left(\frac{m}{n}\right)\nonumber\\
        &\leq \exp(-\frac{2}{p}T)n\log\left(\frac{m}{n}\right)\nonumber\\
        &\leq  \exp\left(-\frac{2}{p}\frac{p}{2}\log\left(16\frac{np^2}{\varepsilon^2}\log\left(\frac{m}{n}\right)\right)\right)n\log\left(\frac{m}{n}\right)\nonumber\\
        &= \frac{\varepsilon^2}{16p^2}.\nonumber
    \end{align}
    Therefore, by Lemma~\ref{lemma:OptError2LogDetDiv}, we have
    \begin{align}
        \Tr\left(Y^{p/(p-2)}\right)-\log\left(\det\left(Y^{p/(p-2)}\right)\right)-n
        &\leq n\left( f_{\text{Lew}}(x_{T+1})-f_{\text{Lew}}(x_{\text{Lew},\star})\right)\nonumber\\
        &\leq \frac{\varepsilon^2}{16p^2}\nonumber\\
        &\leq 0.1,\nonumber
    \end{align}
    where $Y$ is defined in Lemma~\ref{lemma:OptError2LogDetDiv}.
    Furthermore, by Lemma~\ref{lemma:LogDetDiv2Lowner}, we obtain
    \begin{align}
        \left(1-\frac{\varepsilon}{2p}\right)I_n
        \leq Y^{p/(p-2)}
        \leq \left(1+\frac{\varepsilon}{2p}\right)I_n.\nonumber
    \end{align}
    Equivalently, we have
    \begin{align}
        \left(1+\frac{\varepsilon}{2p}\right)^{-(1-2/p)} M(x_{\text{Lew},\star})^{-1}
        \leq M(x_{T+1})^{-1}
        \leq \left(1-\frac{\varepsilon}{2p}\right)^{-(1-2/p)} M(x_{\text{Lew},\star})^{-1}.\nonumber
    \end{align}
    By Lemma~\ref{lemma:FixPtProp}, we have
    \begin{align}
        \frac{1}{n}\left(1+\frac{\varepsilon}{2p}\right)^{-(1-2/p)} \Tr\left(M(x_{\text{Lew},\star})^{-1}A_j\right)
        &=\left(1+\frac{\varepsilon}{2p}\right)^{-(1-2/p)} x_{\text{Lew},\star}[j]^{2/p}\nonumber\\
        &\leq \frac{1}{n}\Tr\left(M(x_{T+1})^{-1}A_j\right)\nonumber\\
        &=\left(x_{T+1}[j]\left(-\nabla f_{\text{Lew}}(x_{T+1})[j]\right)^{p/2}\right)^{2/p}\nonumber\\
        &\leq \left(1-\frac{\varepsilon}{2p}\right)^{-(1-2/p)} x_{\text{Lew},\star}[j]^{2/p},\nonumber
    \end{align}
    where $A_j$ is defined as in Lemma~\ref{lemma:FixPtProp}.
    By raising both sides to the power $p/2$, we obtain
    \begin{align}
        \left(1+\frac{\varepsilon}{2p}\right)^{-(p/2-1)} x_{\text{Lew},\star}[j]
        &\leq x_{T+1}[j]\left(-\nabla f_{\text{Lew}}(x_{T+1})[j]\right)^{p/2}\nonumber\\
        &\leq \left(1-\frac{\varepsilon}{2p}\right)^{-(p/2-1)} x_{\text{Lew},\star}[j].\nonumber
    \end{align}
    
    The proof continues as follows:
    \begin{align}
        \left(1+\frac{\varepsilon}{2p}\right)^{-(p/2-1)} 
        &=\exp\left(-\left(\frac{p}{2}-1\right)\log\left(1+\frac{\varepsilon}{2p}\right)\right)\nonumber\\
        &\geq\exp\left(-\left(\frac{p}{2}-1\right)\frac{\varepsilon}{2p}\right)\nonumber\\ 
        &\geq 1-\frac{\varepsilon}{4}\nonumber\\
        &\geq 1-\varepsilon.\nonumber
    \end{align}
    On the other hand, we write
    \begin{align}
        \left(1-\frac{\varepsilon}{2p}\right)^{-(p/2-1)} 
        &=\exp\left(-\left(\frac{p}{2}-1\right)\log\left(1-\frac{\varepsilon}{2p}\right)\right)\nonumber\\
        &\leq\exp\left(\left(\frac{p}{2}-1\right)\left(\frac{2\left(\frac{\varepsilon}{2p}\right)-\left(\frac{\varepsilon}{2p}\right)^2}{2\left(1-\left(\frac{\varepsilon}{2p}\right)\right)}\right)\right)\nonumber\\ 
        &\leq\exp\left(\left(\frac{p}{2}-1\right)\left(\frac{2\left(\frac{\varepsilon}{2p}\right)}{2\left(1-\left(\frac{\varepsilon}{2p}\right)\right)}\right)\right)\nonumber\\ 
        &\leq\exp\left(\left(\frac{p}{2}-1\right)\frac{\varepsilon}{p}\right)\nonumber\\ 
        &\leq \exp\left(\frac{\varepsilon}{2}\right)\nonumber\\
        &\leq \exp\left(\frac{2\varepsilon}{2+\varepsilon}\right)\nonumber\\
        &=\exp\left(\varepsilon-\frac{\varepsilon^2}{2+\varepsilon}\right)\nonumber\\
        &\leq \exp\left(\log(1+\varepsilon)\right)\nonumber\\
        &= 1+\varepsilon,\nonumber
    \end{align}
    where the first and the last inequalities follow from Lemma~\ref{lemma:NesterovLogIneq}, and the third and the fifth inequalities follow from the fact that $\varepsilon\in(0,1]$.
    This concludes the proof.
\end{proofof}

\subsubsection{Proof of Lemma~\ref{lemma:OptError2LogDetDiv}}
\label{appendix:ProofOptError2LogDetDiv}
We will use Jensen's trace inequality (Lemma~\ref{lemma:JensenTraceIneq}) to prove Lemma~\ref{lemma:OptError2LogDetDiv}.

\begin{lemma}[{[\citealp{Hansen2003}, Theorem~2.4]}]
    \label{lemma:JensenTraceIneq}
    Suppose that a function $f$ is continuous and convex on $\mathbb{R}_{++}$.
    Then, for any matrices $X_1,\dots,X_L\in\mathcal{H}_{n,++}$ and $C_1,\dots,C_L\in\mathcal{H}_{n}$ such that $\sum_{i=1}^L C_i^2=I_n$, we have
    \begin{align}
        \Tr\left(f\left(\sum_{i=1}^L C_iX_iC_i\right)\right)
        \leq \Tr\left(\sum_{i=1}^L C_if(X_i)C_i\right).\nonumber
    \end{align}
\end{lemma}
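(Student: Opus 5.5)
We prove the inequality directly, by reducing it to the scalar Jensen inequality twice: once over the index $i$, and once over the spectrum of each $X_i$.

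First, we check that the left-hand side is well defined. Set $Y\coloneqq\sum_{i=1}^L C_iX_iC_i$, which is Hermitian. For any unit vector $u$, let $\lambda_{\min}>0$ be the smallest eigenvalue among all $X_i$. Since $\sum_i C_i^2=I_n$,
\begin{align}
    \inner{u}{Yu}=\sum_{i=1}^L\inner{C_iu}{X_iC_iu}\geq\lambda_{\min}\sum_{i=1}^L\norm{C_iu}^2=\lambda_{\min}.
\end{align}
Hence $Y\in\mathcal{H}_{n,++}$, so $f(Y)$ is defined through the eigendecomposition of $Y$.

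Next, let $Y=\sum_{k=1}^n\lambda_k u_ku_k^{*}$ with $\{u_k\}$ an orthonormal eigenbasis, so that $\mathrm{Tr}(f(Y))=\sum_k f(\lambda_k)$. Fix $k$ and set $w_{ik}\coloneqq\norm{C_iu_k}^2$. Then $\sum_i w_{ik}=\inner{u_k}{\sum_iC_i^2u_k}=1$, so the $w_{ik}$ are probability weights. For each $i$ with $w_{ik}>0$, let $v_{ik}\coloneqq C_iu_k/\norm{C_iu_k}$; indices with $w_{ik}=0$ contribute nothing to any sum below and are dropped. Then
\begin{align}
    \lambda_k=\inner{u_k}{Yu_k}=\sum_{i=1}^L w_{ik}\inner{v_{ik}}{X_iv_{ik}},
\end{align}
which is a convex combination of points in $\mathbb{R}_{++}$. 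Scalar convexity of $f$ therefore gives
\begin{align}
    f(\lambda_k)\leq\sum_{i=1}^L w_{ik}\,f\bigl(\inner{v_{ik}}{X_iv_{ik}}\bigr).
\end{align}

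The key step is to compare each term on the right with $\inner{v_{ik}}{f(X_i)v_{ik}}$. Write $X_i=\sum_l\sigma_l P_l$ spectrally. For a unit vector $v$, the numbers $p_l\coloneqq\inner{v}{P_lv}\geq 0$ sum to $1$, and
\begin{align}
    \inner{v}{X_iv}=\sum_l p_l\sigma_l,\qquad \inner{v}{f(X_i)v}=\sum_l p_l f(\sigma_l).
\end{align}
Scalar Jensen applied to this spectral distribution yields
\begin{align}
    f\bigl(\inner{v}{X_iv}\bigr)\leq\inner{v}{f(X_i)v}.
\end{align}
This is the Peierls-type inequality. It is the only point where the unitarily invariant structure of $f(X_i)$ enters, and it is the step that needs the most care.

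Finally, combine the two bounds. Using $w_{ik}\inner{v_{ik}}{f(X_i)v_{ik}}=\inner{C_iu_k}{f(X_i)C_iu_k}$, we get
\begin{align}
    \mathrm{Tr}(f(Y))\leq\sum_{k=1}^n\sum_{i=1}^L\inner{u_k}{C_if(X_i)C_iu_k}=\mathrm{Tr}\Bigl(\sum_{i=1}^L C_if(X_i)C_i\Bigr),
\end{align}
where the last equality holds because $\{u_k\}$ is an orthonormal basis. This is the claimed inequality.

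Continuity of $f$ is not used beyond $f$ being a well-defined convex function on $\mathbb{R}_{++}$. The only remaining bookkeeping is the treatment of the vanishing weights $w_{ik}=0$, which the convention of dropping those indices handles.
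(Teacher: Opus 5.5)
Your proof is correct and complete. Positivity of $Y$, the probability weights $w_{ik}=\norm{C_iu_k}^2$, the two applications of scalar Jensen, and the reassembly of the trace in the eigenbasis of $Y$ all check out. (The two Jensen steps are first over the index $i$, then over the spectral distribution $p_l=\inner{v}{P_lv}$ of $X_i$.) You are also right that continuity plays no role: a convex function on the open interval $\mathbb{R}_{++}$ is automatically continuous.

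The paper gives no argument of its own here; it imports the statement from Hansen and Pedersen, whose framework is more general. For instance, their coefficients need not be Hermitian (they only require $\sum_i C_i^{*}C_i=I$), and their functions may be defined on arbitrary intervals.

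Your route instead gives a short, self-contained proof of exactly the finite-dimensional case the paper needs. It already covers non-Hermitian coefficients with $\sum_i C_i^{*}C_i=I_n$, because every place you use $C_i^{*}=C_i$ is an identity of the form $\inner{C_iu}{AC_iu}=\inner{u}{C_i^{*}AC_iu}$.

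Note also that in the paper's only use of this lemma, the proof of Lemma~\ref{lemma:OptError2LogDetDiv}, each $X_j$ is a scalar multiple of $I_n$. Your Peierls step therefore holds with equality there, and only the first Jensen step over the index is doing any work.
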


\begin{proofof}{Lemma~\ref{lemma:OptError2LogDetDiv}}
    We define $E_j$ by
    \begin{align}
        \label{def:Ej}
        E_j 
        =
        M(x_{\text{Lew},\star})^{-0.5}\left(x_{\text{Lew},\star}[j]^{-2/p}A_j\right)M(x_{\text{Lew},\star})^{-0.5},\quad\text{for all } 1\leq j\leq m,
    \end{align}
    and let $A_j$ be defined as in Lemma~\ref{lemma:FixPtProp}.
    We have
    \begin{align}
        \Tr\left(Y^{p/(p-2)}\right)
        &=\Tr\left(\left(M(x_{\text{Lew},\star})^{-0.5}M(x)M(x_{\text{Lew},\star})^{-0.5}\right)^{p/(p-2)}\right)\nonumber\\
        &=\Tr\left(\left(\sum_{j=1}^m\left(\frac{x[j]}{x_{\text{Lew},\star}[j]}\right)^{1-2/p}x_{\text{Lew},\star}[j]E_j\right)^{p/(p-2)}\right)\nonumber\\
        &=\Tr\left(\left(\sum_{j=1}^m\sqrt{x_{\text{Lew},\star}[j]E_j}\left(\frac{x[j]}{x_{\text{Lew},\star}[j]}I_n\right)^{1-2/p}\sqrt{x_{\text{Lew},\star}[j]E_j}\right)^{p/(p-2)}\right).\nonumber
    \end{align}
    In addition, we have $((x[j]/x_{\text{Lew},\star}[j])I_n)^{1-2/p}\in\mathcal{H}_{n,++}$ and $\sum_{j=1}^m \sqrt{x_{\text{Lew},\star}[j]E_j}^2=I_n$.
    Therefore, by Jensen's trace inequality (Lemma~\ref{lemma:JensenTraceIneq})  and the convexity of the mapping $x\mapsto x^{p/(p-2)}$ on $(0,\infty)$, we have
    \begin{align}
        \Tr\left(Y^{p/(p-2)}\right)
        &=\Tr\left(\left(\sum_{j=1}^m\sqrt{x_{\text{Lew},\star}[j]E_j}\left(\frac{x[j]}{x_{\text{Lew},\star}[j]}I_n\right)^{1-2/p}\sqrt{x_{\text{Lew},\star}[j]E_j}\right)^{p/(p-2)}\right)\nonumber\\
        &\leq\Tr\left(\sum_{j=1}^m\sqrt{x_{\text{Lew},\star}[j]E_j}\left(\left(\frac{x[j]}{x_{\text{Lew},\star}[j]}I_n\right)^{1-2/p}\right)^{p/(p-2)}\sqrt{x_{\text{Lew},\star}[j]E_j}\right)\nonumber\\
        &=\sum_{j=1}^m x[j]\mathrm{Tr}(E_j)\nonumber\\
        &=n\sum_{j=1}^m x[j]\nonumber\\
        &=n,\nonumber
    \end{align}
    where the third equality follows from Lemma~\ref{lemma:FixPtProp}.

    Hence, we have
    \begin{align}
        n\left(f_{\text{Lew}}(x)-f_{\text{Lew}}(x_{\text{Lew},\star})\right)
        &=\frac{1}{\frac{2}{p}-1}\left(\log\left(\det(M(x))\right)-\log\left(\det(M(x_{\text{Lew},\star}))\right)\right)\nonumber\\
        &=n-\log\left(\det\left(Y^{p/(p-2)}\right)\right)-n\nonumber\\
        &\geq \Tr\left(Y^{p/(p-2)}\right)-\log\left(\det\left(Y^{p/(p-2)}\right)\right)-n.\nonumber
    \end{align}
    This concludes the proof.
\end{proofof}

\subsection{Application to Petz-R\'{e}nyi Capacity}
\label{appendix:ApplicationPetzRenyiCapacity}
We prove Lemma~\ref{lemma:HessianBoundsPetz} and
Corollary~\ref{cor:PetzMGMOptErrorKLConv} in
Appendix~\ref{appendix:ProofHessianBoundsPetz} and
Appendix~\ref{appendix:ProofPetzMGMOptErrorKLConv}, respectively.
Both results are stated in Section~\ref{subsec:PetzRenyiCapacity}.

\subsubsection{Proof of Lemma~\ref{lemma:HessianBoundsPetz}}
\label{appendix:ProofHessianBoundsPetz}
The proof of Lemma~\ref{lemma:HessianBoundsPetz} is based on Lemma~\ref{lemma:TiltedTraceJointConcave} and Lemma~\ref{lemma:MatrixDifferentiable}.

\begin{lemma}[{[\citealp{Carlen2008}, Theorem~1.1]}]
    \label{lemma:TiltedTraceJointConcave}
    The mapping 
    \begin{align}
        (X_1,\dots,X_m)\mapsto \Tr\left(\left(\sum_{j=1}^m X_j^{\alpha}\right)^{1/\alpha}\right)
    \end{align}
    is concave on $\mathcal{H}_{n,+}^m$ for $0<\alpha<1$.
\end{lemma}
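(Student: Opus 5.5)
Since this lemma is quoted from \citet{Carlen2008}, we will simply invoke it in the proof of Lemma~\ref{lemma:HessianBoundsPetz}. Still, here is how we would prove it directly. The plan is to reduce the $m$-tuple statement to a single-matrix concavity statement and then prove that statement using homogeneity together with an operator-concavity argument.

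\emph{Step 1 (reduction to one block matrix).} Embed $(X_1,\dots,X_m)$ as the block-diagonal matrix $X=\mathrm{Diag}(X_1,\dots,X_m)\in\mathcal{H}_{nm,+}$. Let $B\in\mathbb{C}^{nm\times n}$ be the column stack of $m$ copies of $I_n$. Then $\sum_{j=1}^m X_j^{\alpha}=B^{*}X^{\alpha}B$, and the map $(X_1,\dots,X_m)\mapsto X$ is linear. It therefore suffices to show that
\begin{align}
\Phi(X)\coloneqq\Tr\left(\left(B^{*}X^{\alpha}B\right)^{1/\alpha}\right)
\end{align}
is concave on $\mathcal{H}_{nm,+}$ for every fixed $B$. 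By continuity, it is enough to treat $X\in\mathcal{H}_{nm,++}$.

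\emph{Step 2 (homogeneity).} $\Phi$ is positively homogeneous of degree one and nonnegative. A degree-one homogeneous nonnegative function is concave if and only if its superlevel set $\Set{X:\Phi(X)\geq 1}$ is convex. Equivalently, it suffices to prove
\begin{align}
\Phi\left(\tfrac{X+Y}{2}\right)\geq\tfrac12\left(\Phi(X)+\Phi(Y)\right)
\end{align}
after normalizing $\Phi(X)=\Phi(Y)=1$. The available tools are the operator concavity of $X\mapsto X^{\alpha}$ for $\alpha\in(0,1)$, which gives $B^{*}\left(\tfrac{X+Y}{2}\right)^{\alpha}B\geq\tfrac12 B^{*}(X^{\alpha}+Y^{\alpha})B$, and the monotonicity of $\Tr((\cdot)^{1/\alpha})$. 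However, $\Tr(W^{1/\alpha})$ is \emph{convex} in $W$, so these two facts alone only give $\Phi\left(\tfrac{X+Y}{2}\right)\geq\Tr\left(\left(\tfrac12(X^{\alpha}+Y^{\alpha})\right)^{1/\alpha}\right)$ after conjugation by $B$. This quantity need not dominate the average of $\Phi(X)$ and $\Phi(Y)$.

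\emph{Step 3 (the main obstacle).} The crux is precisely to overcome this convexity of the outer power $t\mapsto t^{1/\alpha}$. A representation of $\Tr(W^{1/\alpha})$ as an infimum of functions linear in $W$ cannot exist, because such an infimum would be concave in $W$. The argument must therefore exploit the coupling between the inner $\alpha$-power and the outer $1/\alpha$-power. We would first try the Carlen--Lieb route. Write $\Phi(X)=\Tr\left(\left(X^{\alpha/2}BB^{*}X^{\alpha/2}\right)^{1/\alpha}\right)$. Then use a variational formula over $Z>0$ in which $X$ enters as $\Tr(B^{*}X^{\alpha}B\,Z^{1-1/\alpha})$ combined with a compensating term in $Z$ that involves $X$ only through $\Tr(X)$, which is linear. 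The aim is to obtain $\Phi(X)=\inf_{Z}F(X,Z)$ with each $F(\cdot,Z)$ concave; concavity of $F(\cdot,Z)$ would follow from Lieb's concavity theorem or the operator concavity of $X^{\alpha}$.

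If this fails, the fallback is Epstein's theorem. It asserts that $X\mapsto\Tr\left((B^{*}X^{\alpha}B)^{1/\alpha}\right)$ is concave, and its proof proceeds by restricting to lines $X+tY$ and showing that the resulting scalar function extends to a Pick (Herglotz) function. Concavity along every line then yields concavity of $\Phi$, and Step 1 transfers it back to the $m$-tuple map.
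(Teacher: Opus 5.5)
Invoking \citet{Carlen2008} is exactly what the paper does: it gives no proof of this lemma and simply imports Theorem~1.1 there. Your Step~1 is also correct. It is the standard reduction to Epstein's theorem, since after the block-diagonal embedding the claim is precisely concavity of $X\mapsto\Tr((B^{*}X^{\alpha}B)^{1/\alpha})$.

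The \emph{direct} argument beyond that, however, is not a proof. The variational route in Step~3 cannot work in the form you describe. Take $n=1$, $m=2$, and $X=\mathrm{Diag}(x_1,x_2)$. Then $Z>0$ and $W=B^{*}X^{\alpha}B=x_1^{\alpha}+x_2^{\alpha}$ are scalars. Any $F(X,Z)=c\,\Tr(B^{*}X^{\alpha}BZ^{1-1/\alpha})+g(Z,\Tr(X))$ is affine in $W$ for fixed $Z$ and fixed $s=\Tr(X)$. The same holds more generally for any $F$ that, for fixed $Z$ and $s$, is a concave function of $W$. Hence $\inf_{Z}F$ is concave in $W$ on the slice $\Tr(X)=s$. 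But it is supposed to equal $W^{1/\alpha}$, which is strictly convex on the nondegenerate interval $(s^{\alpha},2^{1-\alpha}s^{\alpha}]$ that $W$ sweeps out on that slice.

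The dual variable has to pair with $X$ itself, not with $B^{*}X^{\alpha}B$. In the commutative case the correct formula is $(\sum_j x_j^{\alpha})^{1/\alpha}=\min_{z\in\mathrm{ri}(\Delta_m)}\inner{x}{z^{1-1/\alpha}}$, by H\"older. In general, a representation $\Phi(X)=\inf_{Z>0}\Tr(\nabla\Phi(Z)X)$ holds if and only if the degree-one homogeneous $\Phi$ is concave (compare Lemma~\ref{lemma:USupLinear}). So exhibiting one is equivalent to the theorem, not a route to it.

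Your fallback, Epstein's theorem, is after Step~1 literally the statement being proved. The sketch therefore reduces to \emph{block embedding plus a citation}. That is logically fine and is the same kind of justification the paper uses, but it is not self-contained.

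If you do describe Epstein's method, note that extending to a Pick function does not by itself give concavity on a real interval: $z\mapsto -1/z$ is Pick but convex on $(-\infty,0)$. What is needed is analytic continuation across a half-line. That is why one restricts to lines $t\mapsto Y+tX$ with $X$ positive definite. One then uses degree-one homogeneity, via superadditivity $\Phi(X+Y)\geq\Phi(X)+\Phi(Y)$, to obtain concavity in all directions.
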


\begin{lemma}[{[\citealp{Hiai2014}, Theorem~3.33]}]
    \label{lemma:MatrixDifferentiable}
    For any function $f\colon\mathbb{R}\to\mathbb{R}$ that is $m$ times continuously differentiable on an open interval $(a,b)\subseteq\mathbb{R}$, its matrix extension $f\colon \mathcal{H}_n\to\mathcal{H}_n$ is also $m$ times Fr\'{e}chet differentiable on the set of Hermitian matrices with eigenvalues in $(a,b)$.
\end{lemma}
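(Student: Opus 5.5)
This is a cited result ([Hiai2014, Theorem~3.33]), so only its standard proof needs to be recalled. I would prove it by the Daleckii--Krein approach: establish the claim for polynomials with explicit derivative formulas, then pass to general $C^m$ functions by polynomial approximation in the $C^m$ norm, using that a uniformly convergent sequence of $C^m$ maps, whose first $m$ derivatives also converge uniformly, has a $C^m$ limit.

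\textbf{Step 1 (polynomials).} For $f(t)=t^k$, the extension $X\mapsto X^k$ is a polynomial map on $\mathcal{H}_n$, hence $C^\infty$. Its $j$-th Fr\'echet derivative at $X=\sum_i \lambda_i P_i$ is given in the eigenbasis of $X$ by a Schur-type multilinear formula. For $j=1$, $Df(X)[H]=\sum_{i,l} f^{[1]}(\lambda_i,\lambda_l)P_iHP_l$. For general $j$, $D^jf(X)[H_1,\dots,H_j]$ is a symmetrized sum of terms $f^{[j]}(\lambda_{i_0},\dots,\lambda_{i_j})P_{i_0}H_{\sigma(1)}P_{i_1}\cdots H_{\sigma(j)}P_{i_j}$, where $f^{[j]}$ is the $j$-th divided difference. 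I would verify this on monomials by expanding $(X+H)^k$, and it then extends by linearity to all polynomials.

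\textbf{Step 2 (uniform bounds).} Fix a compact subinterval $[a',b']\subset(a,b)$ and work on the open set $\mathcal{U}$ of Hermitian matrices with spectrum in $(a',b')$. By the Hermite--Genocchi formula, $|f^{[j]}|\le \sup_{[a',b']}|f^{(j)}|/j!$ on $[a',b']^{j+1}$. Since $n$ is fixed, the multilinear map in Step~1 then has operator norm at most $C_{n,j}\sup_{[a',b']}|f^{(j)}|$, uniformly over $X\in\mathcal{U}$. This holds even though the spectral projections $P_i$ vary discontinuously with $X$: each term involves only entries in an orthonormal eigenbasis, and there are at most $n^{j+1}$ of them.

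\textbf{Step 3 (approximation and limit).} Choose polynomials $p_k$ with $p_k^{(j)}\to f^{(j)}$ uniformly on $[a',b']$ for all $j\le m$. To get them, approximate $f^{(m)}$ by Weierstrass and integrate $m$ times, matching the values $f^{(j)}(a')$. By Step~2 applied to $p_k-p_l$, the maps $X\mapsto D^jp_k(X)$ are uniformly Cauchy on $\mathcal{U}$ for every $j\le m$. Also, $p_k(X)\to f(X)$ uniformly on $\mathcal{U}$ by the spectral theorem. The standard theorem on uniform limits of derivatives, applied on the convex set $\mathcal{U}$, then shows that $f$ is $m$ times Fr\'echet differentiable on $\mathcal{U}$, with $D^jf(X)=\lim_k D^jp_k(X)$, given by the same divided-difference formula. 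Since $(a,b)$ is exhausted by such intervals $[a',b']$, the claim follows.

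\textbf{Main obstacle.} The delicate point is the \emph{continuity} of $X\mapsto D^mf(X)$ when eigenvalues cross, since the spectral projections jump there. Uniform convergence of continuous maps sidesteps this: each $D^mp_k$ is continuous because it is a polynomial map, and the uniform bound of Step~2 makes the limit continuous. The care goes into that bound, namely that it depends only on $\sup|p^{(m)}|$ and not on eigenvalue gaps. This uses the Hermite--Genocchi integral representation, which stays valid on the diagonal where eigenvalues coincide.
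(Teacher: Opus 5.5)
The paper gives no proof of this lemma and only cites Theorem~3.33 of \citet{Hiai2014}. Your argument is correct and is essentially the standard proof of that result: the multilinear divided-difference formula for polynomials, the Hermite--Genocchi bound $\|D^j p(X)\|\le n^{j+1}\sup_{[a',b']}|p^{(j)}|$ holding uniformly on the open convex set $\{X : a'I<X<b'I\}$, and $C^m$ polynomial approximation passed through the uniform-limit theorem for derivatives (which in fact yields continuity of $D^m f$ as well).
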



\begin{proofof}{Lemma~\ref{lemma:HessianBoundsPetz}}
    Without loss of generality, we assume that
    $\sum_{j=1}^m\mathcal{W}(j)$ is positive definite; otherwise, we may project all quantum states $\mathcal{W}(j)$ onto the support of this sum, which does not change $f_{\text{Petz}}$.

    The log-homogeneity of $f_{\text{Petz}}$ follows directly from the homogeneity of the Schatten norm.

    The continuity of $f_{\text{Petz}}$ follows from the continuity of the Schatten norm and the fact that $\mathrm{Tr}(W(x)^{1/\alpha})>0$ for all $x\in\mathbb{R}_{+}^m\setminus\Set{0}$.

    The fact that $f_{\text{Petz}}$ is twice differentiable on $\mathbb{R}_{++}^m$ follows from Lemma~\ref{lemma:MatrixDifferentiable}, the fact that $x\mapsto W(x)$ is linear, and the fact that the function $x\mapsto x^{1/\alpha}$ is twice continuously differentiable on $\mathbb{R}_{++}$.

    To prove that $\nabla f_{\text{Petz}}(x)>0$ for all $x\in\mathbb{R}_{++}^m$, 
    we compute
    \begin{align}
        \nabla f_{\text{Petz}}(x)[j]
        =\frac{\mathrm{Tr}(W(x)^{1/\alpha-1}\mathcal{W}(j)^{\alpha})}{\mathrm{Tr}(W(x)^{1/\alpha})},
    \end{align}
    which is positive because $W(x)\in\mathcal{H}_{n,++}$ and $\mathcal{W}(j)\in\mathcal{D}_{n}$.

    By Lemma~\ref{lemma:ZeroOrderCharacterization}, it remains to establish the concavity of $U_{1,1/\alpha-1}^{f_{\text{Petz}}}(y)=\alpha\mathrm{Tr}\left(W(y^{\alpha})^{1/\alpha}\right)$ and the convexity of $U_{1,0}^{f_{\text{Petz}}}(y)=\mathrm{Tr}\left(W(y)^{1/\alpha}\right)^{\alpha}$.

    The convexity of $U_{1,0}^{f_{\text{Petz}}}(y)$ follows directly from the convexity of the Schatten norm.

    For the concavity of $\mathrm{Tr}\left(W(y^{\alpha})^{1/\alpha}\right)$, we note that it can be expressed as a composition of two mappings $\Phi_1\colon \mathcal{H}_{n,+}^m\to \mathbb{R}_{+}\colon (A_1,\dots,A_m)\mapsto \mathrm{Tr}\left(\left(\sum_{j=1}^m A_j^{\alpha}\right)^{1/\alpha}\right)$ and $\Phi_2\colon \mathbb{R}_{+}^m\to \mathcal{H}_{n,+}^m\colon y\mapsto (y[1]\mathcal{W}(1),\dots,y[m]\mathcal{W}(m))$.
    The mapping $\Phi_1$ is concave due to Lemma~\ref{lemma:TiltedTraceJointConcave}, and the mapping $\Phi_2$ is linear; hence $U_{1,1/\alpha-1}^{f_{\text{Petz}}}(y)$ is concave.
    This concludes the proof.
\end{proofof}

\subsubsection{Proof of Corollary~\ref{cor:PetzMGMOptErrorKLConv}}
\label{appendix:ProofPetzMGMOptErrorKLConv}
The proof of Corollary~\ref{cor:PetzMGMOptErrorKLConv} is based on Lemma~\ref{lemma:PetzCapacityBounds},  Lemma~\ref{lemma:HessianBoundsPetz}, and  Theorem~\ref{theorem:ConvergenceRateMGM}.
\begin{lemma}[{[\citealp{Cheng2019}, Equation~(13), Proposition~2]}]
    \label{lemma:PetzCapacityBounds}
    The function value $f_{\text{Petz}}(x)$ in the optimization problem~\eqref{def:PetzRenyiCapacity} is non-positive and bounded below by $(\alpha-1)\log(m)$ for all $x\in\Delta_m$.
\end{lemma}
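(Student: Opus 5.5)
The plan is to prove both bounds directly from $W(x)=\sum_{j=1}^m x[j]\mathcal{W}(j)^{\alpha}$ and $f_{\text{Petz}}(x)=\alpha\log\mathrm{Tr}(W(x)^{1/\alpha})$. Since $\alpha>0$, both claims become trace inequalities. The upper bound $f_{\text{Petz}}(x)\leq 0$ is equivalent to $\mathrm{Tr}(W(x)^{1/\alpha})\leq 1$. The lower bound $f_{\text{Petz}}(x)\geq(\alpha-1)\log(m)$ is equivalent to $\mathrm{Tr}(W(x)^{1/\alpha})\geq m^{1-1/\alpha}$.

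For the upper bound, I will use that $t\mapsto t^{1/\alpha}$ is convex on $\mathbb{R}_{+}$ because $1/\alpha>1$. For a convex scalar function $g$, the map $X\mapsto\mathrm{Tr}(g(X))$ is convex on Hermitian matrices. One can also invoke Lemma~\ref{lemma:JensenTraceIneq} with $C_j=\sqrt{x[j]}I_n$; strict positivity there is handled by perturbing and using continuity. Either route gives
\begin{align}
\mathrm{Tr}\Bigl(\bigl(\textstyle\sum_j x[j]\mathcal{W}(j)^{\alpha}\bigr)^{1/\alpha}\Bigr)\leq \textstyle\sum_j x[j]\,\mathrm{Tr}\bigl(\mathcal{W}(j)\bigr)=\textstyle\sum_j x[j]=1 .
\end{align}

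For the lower bound, I will use the concavity established in the proof of Lemma~\ref{lemma:HessianBoundsPetz}. By Lemma~\ref{lemma:TiltedTraceJointConcave} composed with a linear map, the function $U(y)\coloneqq\mathrm{Tr}\bigl((\sum_j y[j]^{\alpha}\mathcal{W}(j)^{\alpha})^{1/\alpha}\bigr)$ is concave on $\mathbb{R}_{+}^m$. It is also positively $1$-homogeneous, and $U(0)=0$. A concave, positively homogeneous function is superadditive, since $U(a+b)=2U((a+b)/2)\geq U(a)+U(b)$. Decomposing $y=\sum_j y[j]e_j$ and noting $U(y[j]e_j)=y[j]\,\mathrm{Tr}(\mathcal{W}(j))=y[j]$, this gives $U(y)\geq\sum_j y[j]$.

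Next, I substitute $y=x^{1/\alpha}$ entrywise, which yields $\mathrm{Tr}(W(x)^{1/\alpha})\geq\sum_j x[j]^{1/\alpha}$. Then I apply convexity of $t\mapsto t^{1/\alpha}$ (the power-mean inequality) on the simplex: $\sum_j x[j]^{1/\alpha}\geq m\cdot(1/m)^{1/\alpha}=m^{1-1/\alpha}$. Taking $\alpha\log$ of both sides gives the bound $(\alpha-1)\log m$.

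The main obstacle is the lower bound. The naive estimate $W(x)\geq x[j]\mathcal{W}(j)^{\alpha}$ only gives $-\log m$, so the superadditivity argument through the Carlen--Lieb concavity (Lemma~\ref{lemma:TiltedTraceJointConcave}) is the step that must be made carefully. In particular, concavity on the closed orthant $\mathbb{R}_{+}^m$ must be used, including at boundary points $y[j]e_j$.
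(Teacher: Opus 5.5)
Your proof is correct, but it follows a different route from the paper. The paper gives no argument of its own for this lemma. It imports the statement from \citet{Cheng2019}, which amounts to the information-theoretic fact that $f_{\text{Petz}}(x)/(\alpha-1)$ lies between $0$ and $\log m$. That quantity is the Petz--R\'enyi information of the input distribution $x$ over the channel $\mathcal{W}$, so this is the bound $0\leq C_{\mathcal{W},\alpha}\leq\log m$ applied pointwise.

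You replace the citation with a self-contained matrix-analytic argument. The upper bound is convexity of $X\mapsto\mathrm{Tr}(X^{1/\alpha})$, evaluated at the convex combination $\sum_j x[j]\mathcal{W}(j)^{\alpha}$. The lower bound is the superadditivity estimate $\mathrm{Tr}(W(x)^{1/\alpha})\geq\sum_j x[j]^{1/\alpha}$ followed by the power-mean inequality. All steps check out. This includes the boundary points $y[j]e_j$, because Lemma~\ref{lemma:TiltedTraceJointConcave} is stated on the closed cone $\mathcal{H}_{n,+}^m$. Your remark that the naive bound $W(x)\geq x[j]\mathcal{W}(j)^{\alpha}$ only yields $-\log m$ is also right.

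What your route buys is independence from an external result whose parametrization the reader must otherwise match to $f_{\text{Petz}}$. It also shows that the lower bound comes from the same concavity of $U_{1,1/\alpha-1}^{f_{\text{Petz}}}$ that gives $L=1/\alpha-1$ in Lemma~\ref{lemma:HessianBoundsPetz}. The citation buys brevity and the operational reading of both bounds.

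Your superadditivity step does not actually need Carlen--Lieb. The inequality $\mathrm{Tr}\bigl((\sum_j A_j)^{p}\bigr)\geq\sum_j\mathrm{Tr}(A_j^{p})$ holds for positive semidefinite $A_j$ and $p\geq 1$; this is McCarthy's inequality. Applying it with $A_j=x[j]\mathcal{W}(j)^{\alpha}$ and $p=1/\alpha$ gives the same estimate with a more elementary tool.
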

\begin{proofof}{Corollary~\ref{cor:PetzMGMOptErrorKLConv}}
    Let $x_{\text{Petz},\star}$ be a minimizer of $f_{\text{Petz}}(x)$ over $x\in\Delta_m$.
    By Lemma~\ref{lemma:HessianBoundsPetz} and Theorem~\ref{theorem:ConvergenceRateMGM}, we have
    \begin{align}
        0
        &\leq f_{\text{Petz}}(x_{T+1})-f_{\text{Petz}}(x_{\text{Petz},\star})\nonumber\\
        &\leq \frac{(1-\alpha)(\RenyiKL{x_{\text{Petz},\star}}{x_1}+f_{\text{Petz}}(x_1)-f_{\text{Petz}}(x_{\text{Petz},\star}))}{\alpha T}\nonumber\\
        &\leq \frac{(1-\alpha)(\log(m)+f_{\text{Petz}}(x_1)-f_{\text{Petz}}(x_{\text{Petz},\star}))}{\alpha T}\nonumber\\
        &\leq \frac{(1-\alpha)(\log(m)+(1-\alpha)\log(m))}{\alpha T},\nonumber
    \end{align}
    where the last inequality follows from Lemma~\ref{lemma:PetzCapacityBounds}.
    This concludes the proof.
\end{proofof}

\subsection{Comparison of Error Measures}
\label{appendix:ComparisonErrorMeasures}
For computing market equilibria~\eqref{def:Fisher} and $\ell_p$-Lewis weights~\eqref{def:Lewis}, \citet{Chu2025} and \citet{Cohen2015b} established convergence guarantees for the MGM in terms of the Thompson metric, defined as
\begin{align}
    d_{\text{T}}(x,y) = \norm{\log(x)-\log(y)}_{\infty}.
\end{align}
\citet{Chu2025} established that, for the equilibrium problem, the Thompson metric upper bounds the optimization error.
This appendix shows that this argument holds more generally for $\theta$-log-homogeneous objective functions.

In particular, let $r=\norm{\log(x)-\log(x_{\star})}_{\infty}$.
We observe that $\exp(-r)x \leq x_{\star} \leq \exp(r)x$, where the inequalities are understood entry-wise.
Therefore, we have
\begin{align}
    \label{ineq:Thompson2Unnormalized}
    f(x)-f(x_{\star})
    &\leq f(x)-f(\exp(-\theta r)x)\nonumber\\
    &=\theta\log(\exp(\theta r))\nonumber\\
    &=r,
\end{align}
where the inequality follows from the assumption that $\theta \nabla  f(x)>0$ for all $x\in\mathbb{R}_{++}^m$, and the first equality follows from the log-homogeneity of $f$.

On the other hand, since \citet{Cohen2015b} did not consider normalizing the iterates as in our MGM update~\eqref{alg:IntroMGM}, we further show that normalization affects the upper bound provided by the Thompson metric only up to a constant multiplicative factor.
In particular, we have
\begin{align}
    \label{ineq:Unnormalized2Normalized}
    f\left(\frac{x}{\norm{x}_1}\right)
    &=f(x)-\theta\log(\norm{x}_1)\nonumber\\
    &\leq f(x)+\abs{\theta\log(\exp(r)\norm{x_{\star}}_1)}\nonumber\\
    &=f(x)+r.
\end{align}

Combining inequalities~\eqref{ineq:Thompson2Unnormalized} and~\eqref{ineq:Unnormalized2Normalized}, we obtain 
\begin{align}
    f\left(\frac{x}{\norm{x}_1}\right) - f(x_{\star})
    \leq 2r,
\end{align}
which implies that the Thompson metric upper bounds the optimization error.

\end{document}